\newtheorem{theorem}{Theorem}[section]
\newtheorem{lemma}[theorem]{Lemma}
\newtheorem{proposition}[theorem]{Proposition}
\newtheorem{corollary}[theorem]{Corollary}
\newtheorem*{theorem*}{Theorem}
\theoremstyle{remark}
\newtheorem{remark}[theorem]{Remark}
\newtheorem{definition}[theorem]{Definition}
\newtheorem{example}[theorem]{Example}
\numberwithin{equation}{section}
\newcommand{\Z}{\mathbb{Z}}
\newcommand{\N}{\mathbb{N}}
\newcommand{\C}{\mathbb{C}}
\newcommand{\A}{\mathcal{A}}
\newcommand{\G}{\mathcal{G}}
\newcommand{\reg}{\textnormal{reg}}
\newcommand{\algspan}{\operatorname{span}}
\newcommand{\ch}{\operatorname{char}}
\begin{document}
\title{Leavitt path algebras with coefficients in a commutative ring}

\author{Mark Tomforde}

\address{Department of Mathematics \\ University of Houston \\ Houston, TX 77204-3008 \\USA}
\email{tomforde@math.uh.edu}

\thanks{The author was supported by NSA Grant H98230-09-1-0036.}

\date{May 4, 2009}

\subjclass[2000]{16W50, 46L55}

\keywords{graph algebras, rings, $R$-algebras, $C^*$-algebras}

\begin{abstract}
Given a directed graph $E$ we describe a method for constructing a Leavitt path algebra $L_R(E)$ whose coefficients are in a commutative unital ring $R$.  We prove versions of the Graded Uniqueness Theorem and Cuntz-Krieger Uniqueness Theorem for these Leavitt path algebras, giving proofs that both generalize and simplify the classical results for Leavitt path algebras over fields.  We also analyze the ideal structure of $L_R(E)$, and we prove that if $K$ is a field, then $L_K(E) \cong K \otimes_\Z L_\Z(E)$. 
\end{abstract}

\maketitle

\section{Introduction}

In \cite{AbrPino} the authors introduced a class of algebras over fields, which they constructed from directed graphs and called \emph{Leavitt path algebras}.  (The definition in \cite{AbrPino} was given for row-finite directed graphs, but the authors later extended the definition in \cite{AbrPino3} to all directed graphs.)  These Leavitt path algebras generalize the Leavitt algebras $L(1,n)$ of \cite{Leav2}, and also contain many other interesting classes of algebras over fields.  In addition, Leavitt path algebras are intimately related to graph $C^*$-algebras (see \cite{Rae}), and for any graph $E$ the Leavitt path algebra $L_\C(E)$ is $*$-isomorphic to a dense $*$-subalgebra of the graph $C^*$-algebra $C^*(E)$ \cite[Theorem~7.3]{Tom10}.

In this paper we generalize the construction of Leavitt path algebras by replacing the field $K$ with a commutative unital ring $R$.  We use the notation $L_R(E)$ for our Leavitt path algebra, and prove that it is a $\Z$-graded $R$-algebra with characteristic equal to the characteristic of $R$.  We also prove versions of the Graded Uniqueness Theorem and the Cuntz-Krieger Uniqueness Theorem, which are fundamental to the study of Leavitt path algebras.  

The Graded Uniqueness Theorem for Leavitt path algebras over a field says that a graded homomorphism $\phi : L_K(E) \to A$ is injective if $\phi(v) \neq 0$ for all $v \in E^0$.  For Leavitt path algebras over rings we need slightly different hypotheses: We prove that a graded homomorphism $\phi : L_R(E) \to A$ is injective if $\phi(rv) \neq 0$ for all $v \in E^0$ and for all $r \in R \setminus \{ 0 \}$.  Similarly, the Cuntz-Krieger Uniqueness Theorem for Leavitt path algebras over a field says that if every cycle in $E$ has an exit, then a homomorphism $\phi : L_K(E) \to A$ is injective if $\phi(v) \neq 0$ for all $v \in E^0$.  Again, our hypotheses for Leavitt path algebras over rings are slightly different: We prove that if every cycle in $E$ has an exit, then a homomorphism $\phi : L_K(E) \to A$ is injective if $\phi(rv) \neq 0$ for all $v \in E^0$ and for all $r \in R \setminus \{ 0 \}$.  

Our proofs of the Uniqueness Theorems use techniques that are different from those that have been used in the proofs for Leavitt path algebras over fields.  Consequently, this paper gives new proofs of each of the Uniqueness Theorems in the case that $R = K$ is a field.  One of the main points of this article is that our proofs of the Uniqueness Theorems are shorter than those in the existing literature.  

After proving our Uniqueness Theorems we continue by analyzing the ideal structure of $L_R(E)$.  For ease and clarity as we analyze ideals, we restrict our attention to the case when the graph $E$ is row-finite.  Because of the hypothesis $\phi(rv) \neq 0$ for all $v \in E^0$ and for all $r \in R \setminus \{ 0 \}$, the Uniqueness Theorems only allow us to analyze what we call basic ideals: an ideal $I$ of $L_R(E)$ is \emph{basic} if $rv \in I$ for $r \in R \setminus \{ 0 \}$ implies that $v \in I$.  In analogy with Leavitt path algebras over fields, we prove in Theorem~\ref{graded-ideals-structure} that the map $H \mapsto I_H$ is a lattice isomorphism from the saturated hereditary subsets of $E$ onto the graded basic ideals of $L_R(E)$.  We also prove in Theorem~\ref{K-iff-ideals-graded} that all basic ideals in $L_R(E)$ are graded if and only if $E$ satisfies Condition~(K).  Finally, in Theorem~\ref{bas-simp-cond-thm} and Proposition~\ref{simple-equiv-prop} we derive conditions for $L_R(E)$ to have no nontrivial proper basic ideals.  These results are similar to the classification of gauge-invariant ideals of graph $C^*$-algebras and Cuntz-Krieger $C^*$-algebras, and we use similar techniques in this paper.  We refer the reader to Remark~\ref{ideal-history} for references to the corresponding results for Cuntz-Krieger algebras, graph $C^*$-algebras, and Leavitt path algebras over fields.

In the final section, we discuss extending the coefficients of a Leavitt path algebra by tensoring with a commutative unital ring.  In particular, we show that if $K$ is a field, then $L_K(E) \cong K \otimes_\Z L_\Z(E)$; and if $K$ is a field of characteristic $p$, then $L_K(E) \cong K \otimes_{\Z_p} L_{\Z_p}(E)$.   This allows us to relate properties of $L_\Z(E)$ and $L_{\Z_p}(E)$ to properties of $L_K(E)$.

This paper is organized as follows:  After some preliminaries in \S \ref{prelim-sec}, we continue in \S \ref{constr-sec} by constructing the Leavitt path algebra over a commutative until ring, and prove that $L_R(E)$ exists and has the appropriate universal property.  In \S \ref{LPA-props-sec} we establish some basic properties of $L_R(E)$.  In \S \ref{GUT-sec} we prove the Graded Uniqueness Theorem for $L_R(E)$, and in \S \ref{CKUT} we prove the Cuntz-Krieger Uniqueness Theorem for $L_R(E)$.  In \S \ref{Ideals-sec} we analyze the ideal structure of $L_R(E)$.  Finally, in \S \ref{tensor-sec} we discuss extending the coefficients of a Leavitt path algebra by taking tensor products.  We conclude with a discussion of the significance of the rings $L_\Z(E)$ and $L_{\Z_n}(E)$.

\section{Preliminaries} \label{prelim-sec}

When we refer to a graph in this paper, we shall always mean a directed graph $E := (E^0, E^1, r, s)$ consisting of a countable set of vertices $E^0$, a countable set of edges $E^1$, and maps $r: E^1 \to E^0$ and $s:E^1 \to E^0$ identifying the range and source of each edge.

\begin{definition}
Let $E := (E^0, E^1, r, s)$ be a graph.  We say that a vertex $v \in E^0$ is a \emph{sink} if $s^{-1}(v) = \emptyset$, and we say that a vertex $v \in E^0$ is an \emph{infinite emitter} if $|s^{-1}(v)| = \infty$.  A \emph{singular vertex} is a vertex that is either a sink or an infinite emitter, and we denote the set of singular vertices by $E^0_\textnormal{sing}$.  We also let $E^0_\textnormal{reg} := E^0 \setminus E^0_\textnormal{sing}$, and refer to the elements of $E^0_\textnormal{reg}$ as \emph{regular vertices}; i.e., a vertex $v \in E^0$ is a regular vertex if and only if $0 < |s^{-1}(v)| < \infty$.
\end{definition}

\begin{definition} \label{graph-prelim-defs}
If $E$ is a graph, a \emph{path} is a sequence $\alpha := e_1 e_2 \ldots e_n$ of edges with $r(e_i) = s(e_{i+1})$ for $1 \leq i \leq n-1$.  We say the path $\alpha$ has \emph{length} $| \alpha| :=n$, and we let $E^n$ denote the set of paths of length $n$.  We consider the vertices in $E^0$ to be paths of length zero.  We also let $E^* := \bigcup_{n=0}^\infty E^n$ denote the paths of finite length, and we extend the maps $r$ and $s$ to $E^*$ as follows: For $\alpha := e_1 e_2 \ldots e_n \in E^n$, we set $r(\alpha) = r(e_n)$ and $s(\alpha) = s(e_1)$.   A \emph{cycle} in $E$ is a path $\alpha \in E^* \setminus E^0$ with $s(\alpha) = r(\alpha)$.  If $\alpha := e_1 \ldots e_n$, then an \emph{exit} for $\alpha$ is an edge $f \in E^1$ such that $s(f) = s(e_i)$ but $f \neq e_i$ for some $1 \leq i \leq n$.  We say that a graph $E$ satisfies Condition~(L) if every cycle in $E$ contains an exit. 
\end{definition}

\begin{definition}
We let $(E^1)^*$ denote the set of formal symbols $\{ e^* : e \in E^1 \}$, and for $\alpha = e_1 \ldots e_n \in E^n$ we define $\alpha^* := e_n^* e_{n-1}^* \ldots e_1^*$.  We also define $v^* = v$ for all $v \in E^0$.  We call the elements of $E^1$ \emph{real edges} and the elements of $(E^1)^*$ \emph{ghost edges}.
\end{definition}

\begin{definition} \label{Leavitt-E-fam-def}
Let $E$ be a directed graph and let $R$ be a ring.  A collection $\{ v, e, e^* : v \in E^0, e \in E^1 \} \subseteq R$ is a \emph{Leavitt $E$-family} in $R$ if $\{v : v \in E^0 \}$ consists of pairwise orthogonal idempotents and the following conditions are satisfied:
\begin{enumerate}
\item $s(e)e = er(e) =e$ for all $e \in E^1$
\item $r(e)e^* = e^* s(e) = e^*$ for all $e \in E^1$
\item $e^*f = \delta_{e,f} \, r(e)$ for all $e, f \in E^1$
\item $v = \displaystyle \sum_{\{e \in E^1 : s(e) = v \}} ee^*$ whenever $v \in E^0_\reg$.
\end{enumerate}
\end{definition}

\begin{definition} \label{Leavitt-def}
Let $E$ be a directed graph, and let $K$ be a field.  The \emph{Leavitt path algebra of $E$ with coefficients in $K$}, denoted $L_K(E)$,  is the universal $K$-algebra generated by a Leavitt $E$-family (see Definition~\ref{Leavitt-E-fam-def}).
\end{definition}

Note that $L_K(E)$ is universal for Leavitt $E$-families in $K$-algebras; i.e., if $A$ is a $K$-algebra and $\{ a_v, b_e, c_{e^*}: v \in E^0, e \in E^1 \}$ is a Leavitt $E$-family in $A$, then there exists a $K$-algebra homomorphism $\phi : L_K(E) \to A$ such that $\phi(v) = a_v$, $\phi(e)=b_e$, and $\phi(e^*)=c_{e^*}$ for all $v \in E^0$ and $e \in E^1$.  It is shown in \cite[\S 1]{AbrPino} and \cite[\S 1]{AbrPino3} that for any graph $E$ the generators $\{ v, e, e^* : v \in E^0, e \in E^1 \}$ of $L_K(E)$ are all nonzero.

In any algebra generated by a Leavitt $E$-family $\{ v, e, e^* : v \in E^0, e \in E^1 \}$, we see that 
\begin{equation} \label{mult-rules-eq}
(\alpha \beta^*)( \gamma \delta^*) = \begin{cases} \alpha \gamma' \delta^* & \text{ if $\gamma = \beta \gamma'$} \\ \alpha \delta^* & \text{ if $\beta = \gamma$} \\ \alpha \beta'^* \delta^* & \text{ if $\beta = \gamma \beta'$} \\ 0 & \text{ otherwise.} \end{cases}
\end{equation}

\subsection{Algebras over commutative rings}  If $R$ is a commutative ring with unit $1$, then an \emph{$R$-algebra} is an abelian group $A$ that has the structure of both a ring and a (left) $R$-module in such a way that 
\begin{itemize}
\item[(1)] $r \cdot (xy) = (r \cdot x) y = x (r \cdot y)$ for all $r \in R$ and $x,y \in A$; and
\item[(2)] $1\cdot x = x$ for all $x \in A$.
\end{itemize}
Note that as a ring, $A$ is not necessarily commutative and $A$ does not necessarily contain a unit.  By a  \emph{homomorphism between $R$-algebras} we mean an $R$-linear ring homomorphism.  If $A$ and $B$ are $R$-algebras, we let $\operatorname{Hom}_R (A,B)$ denote the collection of $R$-linear ring homomorphisms from $A$ to $B$.  We observe that for any $R$-algebra $A$, the endomorphism ring $\operatorname{Hom}_R (A,A)$ is an $R$-algebra in the obvious way.

If $R$ is a commutative ring, the characteristic of $R$, denoted $\ch (R)$, is defined to be the smallest positive integer $n$ such that $n r = 0$ for all $r \in R$, if such an $n$ exists, and $0$ otherwise. It is a fact that if $K$ is a field, then $\ch K$ is either equal to $0$ or a prime $p$.

Any ring $R$ may be viewed as a $\Z$-algebra in the natural way, and if $R$ has characteristic $n$, then $R$ may also be viewed as a $\Z_n$-algebra.  Furthermore, if $A$ is an $R$-algebra and $X \subseteq A$, then we define $$\algspan_R X := \left\{ \sum_{i=1}^n r_i x_i : r_i \in R \text{ and } x_i \in X \text{ for all } 1 \leq i \leq n \right\}$$ to be the $R$-submodule of $A$ generated by the set $X$.

\section{Constructing Leavitt path algebras with coefficients in a commutative ring with unit.} \label{constr-sec}

In this section we wish to extend the definition of a Leavitt path algebra to allow for coefficients in an arbitrary commutative ring with unit.

\begin{definition} \label{fun-ring-def}
Let $E$ be a directed graph, and let $R$ be a commutative ring with unit.   The \emph{Leavitt path algebra with coefficients in $R$}, denoted $L_R(E)$, is the universal $R$-algebra generated by a Leavitt $E$-family (see Definition~\ref{Leavitt-E-fam-def}).
\end{definition}

Note that $L_R(E)$ is universal for Leavitt $E$-families in $R$-algebras; i.e., if $A$ is a $R$-algebra and $\{ a_v, b_e, c_{e^*}: v \in E^0, e \in E^1 \}$ is a Leavitt $E$-family in $A$, then there exists a $R$-algebra homomorphism $\phi : L_R(E) \to A$ such that $\phi(v) = a_v$, $\phi(e)=b_e$, and $\phi(e^*)=c_{e^*}$ for all $v \in E^0$ and $e \in E^1$. 

Recall that any ring is a $\Z$-algebra and any ring of characteristic $n$ is a $\Z_n$-algebra.  This motivates the following definitions.

\begin{definition} \label{fundamental-ring-def}
If $E$ is a graph, the \emph{Leavitt path ring of characteristic $0$} is the ring $L_\Z(E)$, and for each $n \in \mathbb{N}$ the \emph{Leavitt path ring of characteristic $n$} is the ring $L_{\Z_n}(E)$.
\end{definition}

\begin{remark}
In the next proposition we show that the elements of $\{ v, e, e^* : v \in E^0, e \in E^1 \}$ are all nonzero, and that $rv \neq 0$ for all $v \in E^0$ and all $r \in R \setminus \{ 0 \}$.  In Proposition~\ref{lin-ind-prop}, we are able to prove a stronger result: The set of paths $E^*$ in $L_R(E)$ is linearly independent over $R$, and the set of ghost paths $\{ \alpha^* : \alpha \in E^* \}$ in $L_R(E)$ is linearly independent over $R$.
\end{remark}

The construction in the next proposition is an $R$-algebra version of a similar construction that has been done for graph $C^*$-algebras (see \cite[Theorem~1.2]{KPR}) and for Leavitt path algebras over fields (see \cite[Lemma~1.5]{Goo}).

\begin{proposition} \label{LPA-exists}
If $E$ is a graph and $R$ is a commutative ring with unit, then the Leavitt path algebra $L_R(E)$ has the property that the elements of the set $\{ v, e, e^* : v \in E^0, e \in E^1 \}$ are all nonzero.  Moreover, $$L_R(E) = \algspan_R \{ \alpha \beta^* : \alpha, \beta \in E^* \text{ and } r(\alpha) = r(\beta) \}$$ and $r v \neq 0$ for all $v \in E^0$ and all $r \in R \setminus \{ 0 \}$.
\end{proposition}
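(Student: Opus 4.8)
The plan is to prove the three assertions in two independent pieces: the spanning description is a formal consequence of the defining relations, whereas the nonvanishing statements require exhibiting a concrete Leavitt $E$-family in which the relevant elements are nonzero and then invoking the universal property. For the spanning statement I would first note that each generator is already a monomial $\alpha\beta^*$ with $r(\alpha)=r(\beta)$, via $v=v\,v^*$, $e=e\,(r(e))^*$, and $e^*=(r(e))\,e^*$. Since $L_R(E)$ is the $R$-linear span of all products of generators, it then suffices to observe that the product of two monomials $\alpha\beta^*$ and $\gamma\delta^*$ is again a monomial or $0$; this is exactly the content of the multiplication rules \eqref{mult-rules-eq}. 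An induction on word length gives $L_R(E)=\algspan_R\{\alpha\beta^*:\alpha,\beta\in E^*\}$, and since $\alpha\beta^*=\alpha\,r(\alpha)r(\beta)\,\beta^*=0$ whenever $r(\alpha)\neq r(\beta)$ (by orthogonality of the vertex idempotents), we may restrict to the stated spanning set.

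For the nonvanishing assertions I would construct a representation on a free $R$-module. Let $\partial E:=E^\infty\cup\{\mu\in E^*:r(\mu)\in E^0_{\textnormal{sing}}\}$ denote the boundary paths---infinite paths together with finite paths terminating at a singular vertex---and set $V:=\bigoplus_{x\in\partial E}R\,\delta_x$. On $V$ I define $R$-linear operators by $p_v\delta_x=\delta_x$ if $s(x)=v$ (and $0$ otherwise), $s_e\delta_x=\delta_{ex}$ if $s(x)=r(e)$ (and $0$ otherwise), and $t_e\delta_x=\delta_{x'}$ if $x=ex'$ (and $0$ otherwise); one checks that $ex$ and $x'$ again lie in $\partial E$, so these are genuine elements of the $R$-algebra $\operatorname{End}_R(V)$. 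A routine verification shows that $\{p_v,s_e,t_e\}$ is a Leavitt $E$-family, whence the universal property supplies an $R$-algebra homomorphism $\pi:L_R(E)\to\operatorname{End}_R(V)$ with $\pi(v)=p_v$, $\pi(e)=s_e$, $\pi(e^*)=t_e$. Because every vertex is the source of at least one boundary path (extend edges from $v$ until a singular vertex is reached, or indefinitely), each $p_v$, $s_e$, and $t_e$ is nonzero; and since $V$ is free, $r\,p_v\,\delta_x=r\,\delta_x\neq 0$ for $r\neq 0$, so $\pi(rv)\neq 0$ and therefore $rv\neq 0$ in $L_R(E)$. Taking $r=1$ recovers $v\neq 0$, and $e\neq 0$, $e^*\neq 0$ follow likewise.

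The step I expect to be the main obstacle is verifying the Cuntz--Krieger relation~(4), and it is exactly here that using $\partial E$ rather than all of $E^*$ is essential. The operator $\sum_{s(e)=v}s_et_e$ is the projection onto those $\delta_x$ with $s(x)=v$ and $|x|\geq 1$, while $p_v$ is the projection onto all $\delta_x$ with $s(x)=v$; these coincide precisely when no length-$0$ boundary path emanates from $v$. A length-$0$ boundary path at $v$ exists iff $v$ is singular, so for regular $v$---the only vertices at which (4) is imposed---the two operators agree and the relation holds. Had I instead taken the free module on all finite paths $E^*$, the length-$0$ path at a regular vertex would both obstruct relation~(4) and be forced into the basis by relations~(1)--(2), so identifying $\partial E$ as the correct index set is the crux of the argument.
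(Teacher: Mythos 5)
Your proof is correct, and while it follows the same overall strategy as the paper for the nonvanishing claims --- exhibit a Leavitt $E$-family of nonzero $R$-module endomorphisms of a free $R$-module and invoke the universal property --- the representation you build is genuinely different. The paper takes $Z := R \oplus R \oplus \cdots$, sets $A_e := Z$ for each edge, $A_v := \bigoplus_{s(e)=v} A_e$ for regular $v$ (with an extra summand $Z$ when $v$ is singular), and lets $T_e$ be an arbitrarily \emph{chosen} isomorphism $A_{r(e)} \to A_e$; verifying relation (4) amounts to the observation that $A_v$ is exactly the direct sum of the $A_e$ with $s(e)=v$ when $v$ is regular. Your boundary-path module $V = \bigoplus_{x \in \partial E} R\,\delta_x$ is a canonical instance of the same idea: concatenation $x \mapsto ex$ supplies the isomorphism between the summand at $r(e)$ and the summand indexed by paths beginning with $e$, the length-zero boundary paths at singular vertices play the role of the paper's extra copy of $Z$, and your closing discussion of why relation (4) holds precisely at regular vertices is the exact analogue of the paper's case split in the definition of $A_v$. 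What your version buys is concreteness --- no appeal to the fact that two countably generated free modules are abstractly isomorphic, and a representation that is standard in the graph-algebra literature; what the paper's version buys is slightly less bookkeeping (no need to check that $ex$ and the tail $x'$ remain boundary paths). The spanning argument and the deduction $rT_v \neq 0 \Rightarrow rv \neq 0$ are the same in both. One small point worth making explicit in your write-up: the existence, for every vertex $v$, of a boundary path with source $v$ uses that one can extend edge-by-edge from $v$ either forever or until a singular vertex is reached; this is where the (countable) graph structure enters, and it is the analogue of the paper's requirement that each $A_v$ be nonzero.
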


\begin{proof}
The fact that $e^*f = \delta_{e,f} r(e)$ allows us to write any word in the generators $\{ v, e, e^* : v \in E^0, e \in E^1 \}$ as $\alpha \beta^*$ with $\alpha, \beta \in E^*$.  It follows that $L_R(E) = \algspan_R \{ \alpha \beta^* : \alpha, \beta \in E^* \text{ and } r(\alpha) = r(\beta) \}$.

To see that the elements of the set $\{ v, e, e^* : v \in E^0, e \in E^1 \} \subseteq L_R(E)$ are all nonzero, it suffices (due to the universal property) to construct an $R$-algebra generated by nonzero elements satisfying the relations described in Definition~\ref{fun-ring-def}.  Define $Z :=R \oplus R \oplus \ldots$ to be the direct sum of countably many copies of $R$. For each $e \in E^1$ let $A_e := Z$, and for each $v \in E^0$ let $$A_v := \begin{cases} \displaystyle \bigoplus_{s(e)=v} A_e & \text{ if $0 < |s^{-1}(v) | < \infty$} \\ Z \displaystyle \oplus \bigoplus_{s(e)=v} A_e & \text{ if $|s^{-1}(v) | = \infty$} \\ Z & \text{ if $|s^{-1}(v) | = 0$.} \end{cases}$$  Note that the $A_v$'s and $A_e$'s are all mutually isomorphic since each is the direct sum of countably many copies of $R$.  Let $A := \bigoplus_{v \in E^0} A_v$.  For each $v \in E^0$ define $T_v : A_v \to A_v$ to be the identity map, and extend to a homomorphism $T_v : A \to A$ by defining $T_v$ to be zero on $A \ominus A_v$.  Also, for each $e \in E^1$ choose an isomorphism $T_e : A_{r(e)} \to A_e \subseteq A_{s(e)}$ and extend to a homomorphism $T_e : A \to A$ by defining $T_e$ to be zero on $A \ominus A_e$.  Finally, we define $T_{e^*} : A \to A$ by taking the isomorphism $T_e^{-1} : A_e \subseteq A_{s(e)} \to A_{r(e)}$ and extending to obtain a homomorphism $T_{e^*} : A \to A$ by defining $T_{e^*}$ to be zero on $A \ominus A_e$.  Let $A$ be the subalgebra of $\operatorname{Hom}_R (A, A)$ generated by $\{ T_v, T_e, T_{e^*} : v \in E^0, e \in E^1 \}$. One can check that $\{ T_v, T_e, T_{e^*} : v \in E^0, e \in E^1 \}$ is a collection of nonzero elements satisfying the relations described in Definition~\ref{fun-ring-def}.  Thus the subalgebra of $\operatorname{Hom}_R (A, A)$ generated by $\{ T_v, T_e, T_{e^*} : v \in E^0, e \in E^1 \}$ is the desired $R$-algebra.

Finally, we note that for any $v$ we have $A_v = R \oplus M$ for some $R$-module $M$.  Thus for any $r \in R \setminus \{ 0 \}$, using the fact that $R$ is unital we have $r T_v(1,0) = T_v(r,0) = (r,0) \neq 0$.  Hence $rT_v \neq 0$.  The universal property of $L_R(E)$ then implies that $rv \neq 0$ for any $v \in E^0$ and any $r \in R \setminus \{ 0 \}$.  
\end{proof}

\begin{corollary}
Let $E$ be a graph and let $R$ be a commutative ring with unit. Then $\ch L_R(E) = \ch R$.
\end{corollary}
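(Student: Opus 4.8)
The plan is to deduce $\ch L_R(E) = \ch R$ from Proposition~\ref{LPA-exists}, specifically from the fact that $rv \neq 0$ for all $v \in E^0$ and all $r \in R \setminus \{0\}$, together with the $R$-module structure on $L_R(E)$. The characteristic of a ring is controlled by the additive orders of its elements, so the strategy is to pin down the additive order of a single nonzero element, namely a vertex idempotent $v$, and relate it to the characteristic of $R$.

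First I would observe that $L_R(E)$ is nonempty: since $E$ is a graph it has at least one vertex $v \in E^0$, and by Proposition~\ref{LPA-exists} this $v$ is nonzero in $L_R(E)$. I would then establish the inequality $\ch L_R(E) \leq \ch R$ in the appropriate sense. For any $x \in L_R(E)$ and any positive integer $n$, we have $nx = (n\cdot 1)\cdot x$ using the $R$-module structure, where $n \cdot 1 \in R$. Hence if $n$ is a multiple of $\ch R$ (so that $n \cdot 1 = 0$ in $R$), then $nx = 0 \cdot x = 0$ for every $x$. This shows that if $\ch R = n > 0$ then $n \cdot x = 0$ for all $x$, so $\ch L_R(E)$ divides $\ch R$; and if $\ch R = 0$, this argument places no constraint from this direction.

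For the reverse direction I would use the crucial nondegeneracy statement from Proposition~\ref{LPA-exists}. Fix a vertex $v$. For a positive integer $m$, we have $m \cdot v = (m \cdot 1)\cdot v = rv$ where $r = m \cdot 1 \in R$. By the proposition, $rv \neq 0$ whenever $r \neq 0$ in $R$. Therefore $m \cdot v = 0$ forces $m \cdot 1 = 0$ in $R$, i.e. $m$ is a multiple of $\ch R$ (in the characteristic-zero case, $m\cdot v = 0$ forces $m \cdot 1 = 0$, which forces $m = 0$, so $v$ has infinite additive order). Combining the two directions: the set of positive integers $m$ with $m\cdot x = 0$ for all $x \in L_R(E)$ is contained in the multiples of $\ch R$ (from the forward direction) and, via the element $v$, any such $m$ must be a multiple of $\ch R$; conversely every multiple of $\ch R$ annihilates all of $L_R(E)$.

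I would then read off the two cases explicitly. If $\ch R = 0$, the element $v$ has infinite additive order, so no positive integer annihilates $L_R(E)$, giving $\ch L_R(E) = 0 = \ch R$. If $\ch R = n > 0$, then $n$ annihilates all of $L_R(E)$ while no smaller positive integer does (since a smaller $m$ would give $m \cdot 1 \neq 0$ in $R$, hence $m \cdot v = (m\cdot 1)v \neq 0$), so $\ch L_R(E) = n = \ch R$. The only subtle point, and the place I would be most careful, is correctly invoking the $R$-algebra axiom $r\cdot(xy)=(r\cdot x)y$ together with $1\cdot x = x$ to justify the identity $m \cdot x = (m\cdot 1)\cdot x$; once this bookkeeping between the integer action and the $R$-action is set up cleanly, the rest is immediate and no genuine obstacle remains, since Proposition~\ref{LPA-exists} has already done the substantive work of guaranteeing $rv \neq 0$.
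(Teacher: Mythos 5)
Your argument is correct and is exactly the one the paper intends: the corollary is stated without proof immediately after Proposition~\ref{LPA-exists} precisely because it follows from $rv \neq 0$ for all $r \in R \setminus \{0\}$ together with the identity $m\cdot x = (m\cdot 1)\cdot x$ coming from the $R$-module structure. Your careful handling of both divisibility directions and the characteristic-zero case fills in the details the paper leaves implicit, with no gaps.
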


\begin{remark}[A realization of $L_R(E)$] \label{Alt-construction-L(E)}
Suppose $E$ is a graph and $R$ is a commutative ring with unit.  The \emph{path algebra of $E$ with coefficients in $R$} is the $R$-algebra generated by paths with the operation of path concatenation. (Here vertices are considered as paths of length zero.)  In other words, $A_R(E)$ is the free $R$-algebra generated by the paths $E^* = \bigcup_{n=0}^\infty E^n$ with the following relations:
\begin{itemize}
\item[(i)]  $v w = \delta_{v,w} v$ for all $v, w \in E^0$
\item[(ii)] $e = er(e) = s(e)e$ for all $e \in E^1$.
\end{itemize}
If $E = (E^0, E^1, r, s)$ is a graph, we let $\hat{E}$ be the graph with vertex set $\hat{E}^0 := E^0$, edge set $\hat{E}^1 := \{e, e^* : e \in E^1 \}$, and maps $r$ and $s$ extended to $\hat{E}^1$ by $r(e^*) := s(e)$ and $s(e^*) = r(e)$ for all $e \in E^1$.  We see that $L_R(E)$ may be realized as 
the quotient $A_R(\hat{E}) / I$, where $A_R(\hat{E})$ is the path algebra of $\hat{E}$ with coefficients in $R$, and $I$ is the ideal of $A_R(\hat{E})$ generated by the elements
\begin{equation} \label{CK-relations-ideal}
\left\{ e^*f - \delta_{e,f} r(e) : e, f \in E^1 \right\} \cup \big\{ v - \sum_{s(e)=v} ee^* : v \in E^0_\reg \big\}.
\end{equation}
\end{remark}

\section{Properties of Leavitt Path Algebras} \label{LPA-props-sec}

\subsection{Involution and selfadjoint ideals}

As we have seen, any element $x \in L_R(E)$ may be written $x = \sum_{k=1}^N r_k \alpha_k \beta_k^*$ where $\alpha_k, \beta_k \in E^*$ with $r(\alpha_k) = r(\beta_k)$ and $r_k \in R$ for $1 \leq k \leq N$.

\begin{remark}
If $E$ is a graph, $R$ is a commutative ring with unit, and $L_R(E)$ is the associated Leavitt path algebra, we may define an $R$-linear involution $x \mapsto x^*$ on $L_R(E)$ as follows:  If $x = \sum_{k=1}^N r_k \alpha_k \beta_k^*$, then $x^* = \sum_{k=1}^N r_k \beta_k \alpha_k^*$.  Note that this operation is $R$-linear, involutive ($(x^*)^* = x$), and antimultiplicative ($(xy)^* = y^*x^*$).
\end{remark}

\begin{definition}
If $L_R(E)$ is the Leavitt path algebra of a graph $E$ with coefficients in $R$, an ideal $I$ of $L_R(E)$ is \emph{selfadjoint} if $I^* = I$.
\end{definition}

\subsection{Enough idempotents and local units}

A ring $R$ has \emph{enough idempotents} if there exists a collection of pairwise orthogonal idempotents $\{ e_\alpha \}_{\alpha \in \Lambda}$ such that $R = \bigoplus_{\alpha \in \Lambda} e_\alpha R = \bigoplus_{\alpha \in \Lambda} R e_\alpha$.  A \emph{set of local units} for a ring $R$ is a set $\Lambda \subseteq R$ of commuting idempotents with the property that for any $x \in R$ there exists $t \in \Lambda$ such that $tx=xt=x$.  

If $E$ is a graph, $R$ is a commutative ring with unit, and $L_R(E)$ is the associated Leavitt path algebra, then $$L_R(E) = \bigoplus_{v \in E^0} v L_R(E) = \bigoplus_{v \in E^0} L_R(E) v$$ so $L_R(E)$ is a ring with enough idempotents.  Furthermore, if $E^0$ is finite, then $1=\sum_{v \in E^0} v$ is a unit for $L_R(E)$.  If $E^0$ is infinite, then $L_R(E)$ does not have a unit, but if we list the vertices of $E$ as $E^0 = \{v_1, v_2, \ldots \}$ and set $t_n := \sum_{k=1}^n v_k$, then $\{ t_n \}_{n \in \N}$ is a set of local units for $L_R(E)$.

\begin{definition}
A ring $R$ is \emph{idempotent} if $R^2 = R$; that is, if every $x \in R$ can be written as $x = \sum_{k=1}^n a_k b_k$ for $a_1, \ldots a_n, b_1, \ldots, b_n \in R$.
\end{definition}

\begin{remark}
We see that if $R$ is a ring with a set of local units, then $R$ is idempotent:  If $x \in R$, then there exists an idempotent $t \in R$ with $x = tx$.  Consequently, the Leavitt path algebra $L_R(E)$ is an idempotent ring.
\end{remark}

\subsection{$\Z$-graded rings}

We show that all Leavitt  path algebras have a natural $\Z$-grading.  

\begin{definition}
If $R$ is a ring, we say $R$ is \emph{$\Z$-graded} if there is a a collection of additive subgroups $\{ R_k \}_{k \in \Z}$ of $R$ with the following two properties:
\begin{enumerate}
\item $R = \bigoplus_{k \in \Z} R_k$
\item $R_j R_j \subseteq R_{j+k}$ for all $j,k \in \Z$.
\end{enumerate}
The subgroup $R_k$ is called the \emph{homogeneous component of $R$ of degree $k$}.
\end{definition}

\begin{definition}
If $R$ is a graded ring, then an ideal $I$ of $R$ is a \emph{$\Z$-graded ideal} if $I = \bigoplus_{k \in \Z} (I \cap R_k)$.  If $\phi : R \to S$ is a ring homomorphism between $\Z$-graded rings, then $\phi$ is a \emph{graded ring homomorphism} if $\phi(R_k) \subseteq S_k$ for all $n \in \Z$.
\end{definition}

Note that the kernel of a $\Z$-graded homomorphism is a $\Z$-graded ideal.  Also, if $I$ is a $\Z$-graded ideal in a $\Z$-graded ring $R$, then the quotient $R / I$ admits a natural $\Z$-grading and the quotient map $R \to R /I$ is a $\Z$-graded homomorphism.  In this paper we will be concerned only with $\Z$-gradings, and hence we will often omit the prefix $\Z$ and simply refer to rings, ideals, homomorphisms, etc. ~as \emph{graded}. 

\begin{proposition} \label{L(E)-Z-graded}
If $E$ is a graph and $R$ is a commutative ring with unit, then we may define a $\Z$-grading on the associated Leavitt path algebra $L_R(E)$ by setting $$L_R(E)_k := \left\{ \sum_{i=1}^N r_i \alpha_i \beta_i^* : \alpha_i, \beta_i \in E^*, r_i \in R, \text{ and } |\alpha_i | - | \beta_i| = k \text{ for all $i$} \right\}.$$
\end{proposition}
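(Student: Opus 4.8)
The plan is to verify the two defining properties of a $\Z$-grading for the subgroups $L_R(E)_k$ and then to prove that the sum $\sum_{k\in\Z}L_R(E)_k$ is direct, which I expect to be the only genuine difficulty. First, each $L_R(E)_k$ is visibly an additive subgroup (indeed an $R$-submodule) of $L_R(E)$. By Proposition~\ref{LPA-exists} every element of $L_R(E)$ is a finite $R$-linear combination of monomials $\alpha\beta^*$ with $\alpha,\beta\in E^*$, and each such monomial lies in $L_R(E)_{|\alpha|-|\beta|}$; hence $\sum_{k\in\Z}L_R(E)_k = L_R(E)$, which gives property~(1) once directness is known.

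Next, to establish $L_R(E)_j\,L_R(E)_k\subseteq L_R(E)_{j+k}$, it suffices by $R$-bilinearity to check products of monomials $(\alpha\beta^*)(\gamma\delta^*)$ with $|\alpha|-|\beta|=j$ and $|\gamma|-|\delta|=k$, and for this I would run through the four cases of the multiplication rule \eqref{mult-rules-eq}. In every nonzero case the resulting monomial has degree $(|\alpha|-|\beta|)+(|\gamma|-|\delta|)=j+k$: for instance, when $\gamma=\beta\gamma'$ the product is $\alpha\gamma'\delta^*$, whose degree is $|\alpha|+|\gamma'|-|\delta|=|\alpha|+(|\gamma|-|\beta|)-|\delta|=j+k$, and the case $\beta=\gamma\beta'$ (where the product is $\alpha\beta'^*\delta^*$, a monomial $\alpha(\delta\beta')^*$ of degree $|\alpha|-|\delta|-|\beta'|=j+k$) as well as the cases $\beta=\gamma$ and ``otherwise'' are handled identically. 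Thus each homogeneous product lands in the correct component, giving property~(2).

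The main point is directness. Here the plan is to exploit the universal property. I would form the $R$-algebra $L_R(E)\otimes_R R[x,x^{-1}]$, where $R[x,x^{-1}]$ is the Laurent polynomial ring, and check that the elements $\{\,v\otimes 1,\ e\otimes x,\ e^*\otimes x^{-1}:v\in E^0,\ e\in E^1\,\}$ form a Leavitt $E$-family. The orthogonal idempotent relations and conditions (1)--(4) of Definition~\ref{Leavitt-E-fam-def} all follow immediately because the powers of $x$ multiply correctly: for example $(e^*\otimes x^{-1})(f\otimes x)=e^*f\otimes 1=\delta_{e,f}\,r(e)\otimes 1$ and $\sum_{s(e)=v}(e\otimes x)(e^*\otimes x^{-1})=v\otimes 1$ for each regular vertex $v$. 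By the universal property of $L_R(E)$ (note that the codomain need not be unital, which is harmless since Definition~\ref{fun-ring-def} applies to arbitrary $R$-algebras), this yields a homomorphism $\Phi\colon L_R(E)\to L_R(E)\otimes_R R[x,x^{-1}]$ satisfying $\Phi(\alpha\beta^*)=\alpha\beta^*\otimes x^{|\alpha|-|\beta|}$ on monomials.

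Finally, for each $k\in\Z$ let $\epsilon_k\colon R[x,x^{-1}]\to R$ be the $R$-linear map extracting the coefficient of $x^k$, and set $\pi_k:=(\mathrm{id}\otimes\epsilon_k)\circ\Phi$, where I identify $L_R(E)\otimes_R R\cong L_R(E)$. Then $\pi_k$ is an additive map sending a monomial $\alpha\beta^*$ to itself when $|\alpha|-|\beta|=k$ and to $0$ otherwise; in particular $\pi_k$ restricts to the identity on $L_R(E)_k$ and to $0$ on $L_R(E)_j$ for $j\neq k$. Consequently, if $\sum_k x_k=0$ with $x_k\in L_R(E)_k$ and only finitely many nonzero, applying $\pi_m$ gives $x_m=0$ for every $m$, so the sum is direct. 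I expect the twisted-family verification and the degree bookkeeping in \eqref{mult-rules-eq} to be routine, with the construction of the grading-separating projections $\pi_k$ via $\Phi$ being the one step that requires the idea above.
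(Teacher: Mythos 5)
Your proof is correct, but it takes a genuinely different route from the paper's. The paper realizes $L_R(E)$ as the quotient of the free $R$-algebra on $E^0 \cup E^1 \cup (E^1)^*$ --- graded by placing vertices in degree $0$, real edges in degree $1$, and ghost edges in degree $-1$ --- by the ideal generated by the defining relations; since every relator is homogeneous of degree zero, that ideal is graded and the grading descends to the quotient, so directness of the decomposition comes for free and no separate verification of spanning or multiplicativity is needed. You instead check properties (1) and (2) by hand via \eqref{mult-rules-eq} and then establish directness with a ``gauge coaction'' $\Phi : L_R(E) \to L_R(E) \otimes_R R[x,x^{-1}]$ built from the twisted Leavitt $E$-family $\{v \otimes 1,\ e \otimes x,\ e^* \otimes x^{-1}\}$, extracting homogeneous components with the coefficient-of-$x^k$ projections $\pi_k$. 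Both arguments are sound and both rest only on the universal property. The paper's is shorter and makes the direct-sum property automatic; yours is the exact algebraic analogue of averaging over the gauge action on a graph $C^*$-algebra, and it has the added benefit of producing explicit additive projections $\pi_k$ onto the homogeneous components, which can be reused elsewhere (e.g.\ to see directly that an element's homogeneous parts are canonically determined). The one point worth stating explicitly in your write-up is that $L_R(E) \otimes_R R[x,x^{-1}]$ carries a genuine $R$-algebra structure with $(a_1 \otimes p_1)(a_2 \otimes p_2) = a_1a_2 \otimes p_1p_2$ (as the paper notes in its tensor-product section), so that the universal property of Definition~\ref{fun-ring-def} indeed applies to this not-necessarily-unital codomain; once that is in place, your verification of the Leavitt relations and the bookkeeping with $\epsilon_k$ are routine, as you anticipated.
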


\begin{proof}
Let $A$ be the free $R$-algebra generated by $E^0 \cup E^1 \cup (E^1)^*$.  Then $A$ has a unique $\Z$-grading for which the elements of $E^0$, $E^1$, and $(E^1)^*$ have degrees $0$, $1$, and $-1$, respectively.  Let $I$ be the ideal in $A$ generated by elements of the following type:
\begin{itemize}
\item $vw - \delta_{v,w} v$ for $v, w \in E^0$
\item $e-er(e)$ for $e \in E^1$
\item $e-s(e)e$ for $e \in E^1$
\item $e^*f - \delta_{e,f} r(e)$ for $e,f \in E^1$
\item $v - \sum_{s(e)=v} ee^*$ for $v \in E^0_\textnormal{reg}$.
\end{itemize}
Since the elements generating $I$ are all homogeneous of degree zero, it follows that $I$ is a graded ideal.  Furthermore, we see that $A/I \cong L_R(E)$, so that $L_R(E)$ is graded with the homogeneous elements of degree $k$ equal to the set of $R$-linear combinations of elements of the form $\alpha \beta^*$ with $|\alpha | - | \beta| = k$.
\end{proof}

\begin{definition}
If $x \in L_R(E)$, we say that $x$ is a \emph{polynomial in real edges} if $x = \sum_{i=1}^n r_i \alpha_i$ for $r_i \in R \setminus \{ 0 \}$ and $\alpha_i \in E^*$.  In this case we also define the \emph{degree of $x$} to be $$\deg x = \max \{ | \alpha_i| : 1 \leq i \leq n \}.$$  Note that $\deg x$ is independent of how $x$ is written.
\end{definition}

\begin{proposition} \label{lin-ind-prop}
Let $E$ be a graph and let $R$ be a commutative ring with unit.  The set of paths $E^*$ in $L_R(E)$ is linearly independent over $R$.  Likewise, the set of ghost paths $\{ \alpha^* : \alpha \in E^* \}$ in $L_R(E)$ is linearly independent over $R$. 
\end{proposition}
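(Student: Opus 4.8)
The plan is to prove linear independence of the real paths $E^*$ directly, and then deduce the statement for ghost paths by applying the involution. So suppose $\sum_{i=1}^m r_i \alpha_i = 0$ in $L_R(E)$, where the $\alpha_i \in E^*$ are distinct paths and $r_i \in R$; I want to conclude that every $r_i = 0$. My first step would be to pass to the $\Z$-grading of Proposition~\ref{L(E)-Z-graded}: a path $\alpha \in E^n$ is homogeneous of degree $n$ (since $\alpha = \alpha \, r(\alpha) = \alpha\, (r(\alpha))^*$, and $|\alpha| - |r(\alpha)| = n$ in the description of $L_R(E)_n$), so after collecting the $\alpha_i$ according to their lengths, each homogeneous component of the sum must vanish separately. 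This reduces the problem to the case in which all of the $\alpha_i$ share a common length $n$.

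With all paths of equal length, the key computation is to left-multiply the relation by $\alpha_j^*$ for a fixed $j$. Writing $\alpha_j = e_1 \cdots e_n$ and $\alpha_i = f_1 \cdots f_n$ and using the relations in Definition~\ref{Leavitt-E-fam-def} (peeling off $e_1^* f_1 = \delta_{e_1,f_1}\, r(e_1)$ and absorbing the resulting range vertex), one checks by induction that $\alpha_j^* \alpha_i = \delta_{\alpha_j,\alpha_i}\, r(\alpha_j)$ whenever $|\alpha_i| = |\alpha_j|$; this is an instance of the multiplication rules \eqref{mult-rules-eq}. Hence applying $\alpha_j^*$ to $\sum_i r_i \alpha_i = 0$ collapses the sum to the single surviving term $r_j\, r(\alpha_j) = 0$. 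Since Proposition~\ref{LPA-exists} guarantees that $r v \neq 0$ for every $v \in E^0$ and every $r \in R \setminus \{0\}$, I conclude $r_j = 0$; as $j$ was arbitrary, all coefficients vanish and the real paths are linearly independent.

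I expect the only genuinely delicate point to be the reduction to a common length: for paths of differing lengths the product $\alpha_j^* \alpha_i$ need not collapse to a multiple of a vertex---it can produce a nonzero real or ghost path, as \eqref{mult-rules-eq} shows---so the orthogonality trick isolates a single coefficient cleanly only after the grading has separated the lengths. Everything else is routine bookkeeping with the $E$-family relations. Finally, for the ghost paths I would simply observe that if $\sum_i r_i \alpha_i^* = 0$, then applying the $R$-linear, involutive map $x \mapsto x^*$ yields $\sum_i r_i \alpha_i = 0$, whence the independence of the real paths just established forces every $r_i = 0$.
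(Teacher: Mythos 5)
Your proposal is correct and follows essentially the same route as the paper: reduce to paths of a common length via the $\Z$-grading, left-multiply by $\alpha_j^*$ to isolate $r_j\,r(\alpha_j)$, and invoke Proposition~\ref{LPA-exists} to conclude $r_j=0$. The only cosmetic difference is that for ghost paths you apply the involution, whereas the paper simply runs the symmetric argument (right-multiplying by $\alpha_j$); both are fine.
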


\begin{proof}
Suppose that $\alpha_1, \ldots, \alpha_n \in E^*$, and $\sum_{i=1}^n r_i \alpha_i = 0$ for some $r_1, \ldots, r_n \in R$.  Using the $\Z$-grading on $L_R(E)$ we may, without loss of generality, assume that all the $\alpha_i$'s have the same length.  Then for any $1 \leq j \leq n$ we have $r_j (\alpha_j) = \alpha_j^* \alpha_j = \alpha_j^* ( \sum_{i=1}^n r_i \alpha_i) = 0$.  Proposition~\ref{LPA-exists} implies that $r_i = 0$.  It follows that  $\{ \alpha_1, \ldots, \alpha_n \}$ is linearly independent over $R$.  A similar argument works for ghost paths.
\end{proof}

\subsection{Morita equivalence}

Throughout this paper we will need to discuss Morita equivalence for rings that do not necessarily have an identity element.  We recall the necessary definitions and results here.

\begin{definition}
If $R$ is a ring, we say that a left $R$-module $M$ is \emph{unital} if $RM = M$.  We also say that $M$ is \emph{nondegenerate} if for all $m \in M$ we have that $Rm = 0$ implies that $m = 0$.  We let $R$-MOD denote the full subcategory of the category of all $R$-modules whose objects are unital nondegenerate $R$-modules.  (Note that if $R$ is unital, $R$-MOD is the usual category of $R$-modules.)  When $R$ and $S$ are rings, and ${}_RM_S$ is a bimodule, we say $M$ is \emph{unital} if $RM = M$ and $MS=M$.
\end{definition}

\begin{definition}
Let $R$ and $S$ be idempotent rings.  A \emph{(surjective) Morita context} $(R,S,M,N,\psi, \phi)$ between $R$ and $S$ consists of unital bimodules ${}_RM_S$ and ${}_SN_R$, a surjective $R$-module homomorphism $\psi : M \otimes_S N \to R$, and a surjective $S$-module homomorphism $\phi : N \otimes_R M \to S$ satisfying
$$\phi (n \otimes m) n' = n \psi (m \otimes n') \qquad \text{ and } \qquad m' \phi(n \otimes m) = \psi (m' \otimes n)m $$ for every $m, m' \in M$ and $n,n' \in N$.  We say that $R$ and $S$ are \emph{Morita equivalent} in the case that there exists a Morita context.
\end{definition}

It is proven in \cite[Proposition~2.5]{GS} and \cite[Proposition~2.7]{GS} that $R$-MOD and $S$-MOD are equivalent categories if and only if there exists a Morita context $(R,S,M,N,\psi, \phi)$.  In addition, the following result is obtained in \cite{GS}. 

\begin{proposition}  \cite[Proposition~3.5]{GS} \label{ideal-corresp-Mor-eq}
Let $R$ and $S$ be Morita equivalent idempotent rings, and let $(R,S,M,N,\psi, \phi)$ be a Morita context.  If $$\mathcal{L}_R := \{ I \subseteq R : \text{ $I$ is an ideal and $RIR=I$} \}$$ and $$\mathcal{L}_S := \{ I \subseteq S : \text{ $I$ is an ideal and $SIS=I$} \},$$ then there is a lattice isomorphism from $\mathcal{L}_R$ onto $\mathcal{L}_S$ given by $I \mapsto \phi (NI, M)$ with inverse given by $I \mapsto \psi (MI, N)$.
\end{proposition}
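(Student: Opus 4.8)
The plan is to prove Proposition~\ref{ideal-corresp-Mor-eq} by constructing explicit maps in both directions between $\mathcal{L}_R$ and $\mathcal{L}_S$ and verifying that they are mutually inverse, order-preserving bijections. First I would define $\Phi : \mathcal{L}_R \to \mathcal{L}_S$ by $\Phi(I) = \phi(NI \otimes M)$ and $\Psi : \mathcal{L}_S \to \mathcal{L}_R$ by $\Psi(J) = \psi(MJ \otimes N)$, and the initial obligation is to check these are well-defined, i.e.\ that $\Phi(I)$ is genuinely an ideal of $S$ satisfying $S \Phi(I) S = \Phi(I)$, and symmetrically for $\Psi$. To see $\Phi(I)$ is an ideal, I would use the bimodule structure together with surjectivity of $\phi$: left multiplication by $S$ can be absorbed using $S\phi(N \otimes M) = \phi(SN \otimes M) = \phi(N \otimes M)$ (since $N$ is unital as a left $S$-module), and right multiplication similarly, while the compatibility identities $\phi(n \otimes m)n' = n\psi(m \otimes n')$ and $m'\phi(n \otimes m) = \psi(m' \otimes n)m$ let me move scalars across the tensor factors to keep everything inside $\phi(NI \otimes M)$.

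Once well-definedness is settled, the heart of the argument is showing $\Psi \circ \Phi = \mathrm{id}_{\mathcal{L}_R}$ and $\Phi \circ \Psi = \mathrm{id}_{\mathcal{L}_S}$. Unwinding the definitions, $\Psi(\Phi(I)) = \psi\big(M\,\phi(NI \otimes M) \otimes N\big)$, and I would rewrite $M\phi(NI \otimes M)$ using the compatibility relation $m'\phi(n \otimes m) = \psi(m' \otimes n)m$ to convert the outer product into the form $\psi(M \otimes NI)M$, eventually expressing $\Psi(\Phi(I))$ in terms of $\psi(M \otimes N)\,I\,\psi(M \otimes N) = RIR$. This is where the hypothesis $RIR = I$ (i.e.\ that $I$ lies in $\mathcal{L}_R$) becomes essential: it is exactly the condition that collapses $RIR$ back to $I$, so the round trip is the identity precisely on the ideals closed under two-sided multiplication by the ring. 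I expect this bookkeeping — carefully shuttling factors across the tensor symbol via the two compatibility identities and invoking the surjectivity of $\psi$ and $\phi$ to replace $\psi(M \otimes N)$ by $R$ and $\phi(N \otimes M)$ by $S$ — to be the main obstacle, since it requires repeated and careful use of the Morita context axioms.

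Finally, I would verify that both $\Phi$ and $\Psi$ are order-preserving: if $I_1 \subseteq I_2$ in $\mathcal{L}_R$, then $NI_1 \otimes M \subseteq NI_2 \otimes M$ so $\Phi(I_1) \subseteq \Phi(I_2)$, and symmetrically for $\Psi$, which together with the bijection established above gives the claimed lattice isomorphism. Since a bijection between lattices that preserves order in both directions (with an order-preserving inverse) automatically preserves meets and joins, no separate verification of the lattice operations is needed. I would remark that much of this is bookkeeping internal to the theory of Morita equivalence for idempotent rings, so depending on the exposition I might streamline the proof by citing the relevant calculations from \cite{GS}, where this statement is recorded as \cite[Proposition~3.5]{GS}, and only indicate the structure of the verification rather than reproducing every manipulation.
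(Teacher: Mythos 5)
Your outline is mathematically sound, but note that the paper does not actually prove this proposition: it is quoted verbatim from Garc\'ia and Sim\'on \cite[Proposition~3.5]{GS} and used as a black box, so there is no internal argument to compare against. Your reconstruction is the standard direct verification, and the key steps are all correctly identified: well-definedness of $\Phi(I) := \phi(NI \otimes M)$ as an element of $\mathcal{L}_S$ uses the $S$-bilinearity of $\phi$ together with unitality of the bimodules (so that $SNI = NI$ and $MS = M$ give $S\Phi(I)S = \Phi(I)$); the round-trip computation is exactly $M\phi(NI \otimes M) = \psi(M \otimes NI)M = \psi(M\otimes N)IM = RIM$ by the compatibility identity and right $R$-linearity of $\psi$, whence $\Psi(\Phi(I)) = \psi(RIM \otimes N) = RI\,\psi(M\otimes N) = RIR = I$, so the hypothesis $RIR = I$ enters precisely where you say it does; and an order isomorphism of posets that are lattices is automatically a lattice isomorphism, so no separate check of meets and joins is needed. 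The one small point you elide is that $\mathcal{L}_R$ and $\mathcal{L}_S$ are in fact lattices, which is where idempotency of the rings is used (the join of $I$ and $J$ is $I+J$ and the meet is $R(I\cap J)R$, which lies in $\mathcal{L}_R$ because $R^2 = R$). In short: what your approach buys is a self-contained proof; what the paper's approach buys is brevity, and your closing suggestion to simply cite \cite{GS} is exactly what the paper does.
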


\begin{remark} \label{ideal-corresp-rings-loc-units}
Note that when $R$ is a ring with a set of local units, $\mathcal{L}_R$ is the lattice of ideals of $R$.  Thus if each of $R$ and $S$ is a ring with a set of local units, and if $R$ and $S$ are Morita equivalent, then the lattice of ideals of $R$ is isomorphic to the lattice of ideals of $S$.
\end{remark}

Recall that in rings the property of being a ring ideal is not transitive; i.e., if $R$ is a ring, $I$ is an ideal of $R$, and $J$ is an ideal of $I$, then it is not necessarily true that $J$ is an ideal of $R$.  Despite this fact, there is a special case when the implication does hold, and this will be of use to us.

\begin{lemma} \label{local-units-imply-trans}
Let $R$ be a ring and let $I$ be an ideal of $R$ with the property that $I$ has a set of local units.  If $J$ is an ideal of $I$, then $J$ is an ideal of $R$.
\end{lemma}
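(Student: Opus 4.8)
The goal is to show that if $J$ is an ideal of $I$, then $J$ is an ideal of $R$; that is, $RJ \subseteq J$ and $JR \subseteq J$. By symmetry I will focus on proving $RJ \subseteq J$, since the argument for $JR \subseteq J$ is entirely analogous (using right local units and right multiplication). The key structural feature I intend to exploit is that $I$ has a set of local units $\Lambda$: for every element of $I$ there is a commuting idempotent $t \in \Lambda$ that acts as a two-sided identity on it. Since $J \subseteq I$, every $x \in J$ likewise has such a local unit $t$ with $tx = xt = x$.

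The plan is as follows. Fix $r \in R$ and $x \in J$, and let $t \in \Lambda \subseteq I$ be a local unit for $x$, so that $x = tx$. First I would write $rx = r(tx) = (rt)x$. The crucial observation is that $rt$ lies in $I$: indeed $t \in I$ and $I$ is an ideal of $R$, so $rt \in RI \subseteq I$. Now both factors sit inside $I$, with $rt \in I$ and $x \in J \subseteq I$, and since $J$ is an ideal of $I$ we have $(rt)x \in IJ \subseteq J$. Therefore $rx = (rt)x \in J$, which establishes $RJ \subseteq J$.

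For the other inclusion $JR \subseteq J$, I would run the mirror-image argument: given $x \in J$ and $r \in R$, pick a local unit $t \in \Lambda$ with $x = xt$, write $xr = (xt)r = x(tr)$, note that $tr \in IR \subseteq I$ because $I$ is an ideal of $R$, and conclude $x(tr) \in JI \subseteq J$ since $J$ is an ideal of $I$. Combining the two inclusions shows $J$ is a two-sided ideal of $R$.

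I do not anticipate a serious obstacle here; the single idea that makes everything work is inserting the local unit $t$ to ``absorb'' the ambient multiplication by $r$ into a product that stays inside $I$. The only point requiring a moment of care is the order of multiplication, ensuring that I use a left local unit ($tx = x$) when handling $RJ$ and a right local unit ($xt = x$) when handling $JR$; since the elements of $\Lambda$ are idempotents that fix each element of $I$ on both sides, this is guaranteed by the definition of a set of local units.
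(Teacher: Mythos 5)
Your proof is correct and follows essentially the same argument as the paper: insert a local unit $t \in I$ with $tx = x$, absorb $r$ into $rt \in I$, and use that $J$ is an ideal of $I$ to conclude $(rt)x \in J$, with the symmetric argument for $JR$. No issues.
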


\begin{proof}
Let $r \in R$ and $x \in J$.  Since $I$ has a set of local units, there exists $t \in I$ with $tx = x$.  Because $I$ is an ideal, we have that $rt \in I$.  Hence $rx = r(tx) = (rt) x \in J$.  A similar argument shows that $xr \in I$.
\end{proof}

\section{The Graded Uniqueness Theorem} \label{GUT-sec}

\begin{lemma} \label{ideal-gen-by-zero-part}
Let $I$ be a graded ideal of $L_R(E)$.  Then $I$ is generated as an ideal by the set $I_0 := I \cap L_R(E)_0$. 
\end{lemma}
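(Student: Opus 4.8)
The plan is to show that the ideal $\langle I_0 \rangle$ generated by $I_0 := I \cap L_R(E)_0$ equals $I$ itself. One inclusion is immediate: since $I_0 \subseteq I$ and $I$ is an ideal, we have $\langle I_0 \rangle \subseteq I$. The content is in the reverse inclusion, for which it suffices to take an arbitrary homogeneous element of $I$ and express it as a sum of products of the form $a x b$ with $x \in I_0$ and $a,b \in L_R(E)$ (together with elements of $I_0$ themselves). Because $I$ is a graded ideal, we have $I = \bigoplus_{k \in \Z}(I \cap L_R(E)_k)$, so every element of $I$ is a finite sum of homogeneous components each lying in $I$; hence it is enough to handle a single homogeneous $x \in I \cap L_R(E)_k$ and show $x \in \langle I_0\rangle$.

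First I would use Proposition~\ref{LPA-exists} to write such a homogeneous element as a finite $R$-linear combination $x = \sum_{i} r_i \alpha_i \beta_i^*$ with $|\alpha_i| - |\beta_i| = k$ for every $i$. The idea is to manufacture a degree-zero element of $I$ from $x$ by multiplying on the left or right by a suitable ghost path or path. For instance, if $k \geq 0$, then for a fixed path $\gamma$ of length $k$ ending at an appropriate vertex, the element $\gamma^* x$ lies in $I$ (as $I$ is an ideal) and has degree $0$, so $\gamma^* x \in I_0$; multiplying back on the left by $\gamma$ recovers the corresponding piece of $x$ via the multiplication rules in \eqref{mult-rules-eq} and the relation $\gamma^*\gamma = r(\gamma)$. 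Summing over a family of paths $\gamma$ that exhaust the relevant range vertices, and using relation~(4) of Definition~\ref{Leavitt-E-fam-def} at regular vertices to reconstitute the vertex idempotents, should reassemble $x$ as an element of $\langle I_0 \rangle$. The case $k < 0$ is symmetric, multiplying instead by a path on the right and a ghost path on the left.

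The main obstacle I anticipate is the bookkeeping needed to recover \emph{all} of $x$ rather than just the pieces compatible with one chosen path $\gamma$. The products $\alpha_i \beta_i^*$ appearing in $x$ have differing source and range vertices, so no single $\gamma$ will interact nicely with every term; I expect to need the decomposition $x = \sum_{v,w} v x w$ over pairs of vertices (using that the $v$ are orthogonal idempotents and $L_R(E) = \bigoplus_v v L_R(E)$), reducing to the case where all $\alpha_i$ share a common source $v = s(\alpha_i)$ and all $\beta_i$ share a common source $w = s(\beta_i)$. A further subtlety is that when the relevant vertex is singular one cannot invoke relation~(4); instead I would work directly, choosing $\gamma = \alpha_i$ for a term whose $\alpha_i$ is a path of maximal length, so that $\alpha_i^* x$ selects out the matching summands cleanly.

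In summary, the strategy is: reduce to a single homogeneous component of $I$ by graded-ness; cut it down by vertex idempotents so the constituent paths share endpoints; then sandwich the element between a suitable (real or ghost) path and its adjoint to produce a degree-zero element of $I_0$, and multiply back to recover the original element inside $\langle I_0 \rangle$, using the multiplication rules \eqref{mult-rules-eq} and the Leavitt relations to verify the reconstruction. The technical heart is organizing these products so that the degree-zero intermediaries genuinely lie in $L_R(E)_0 \cap I$ and that multiplying back reproduces exactly $x$ and not some modification of it.
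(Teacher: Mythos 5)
Your overall strategy --- reduce to a homogeneous component $x$ of degree $k$ by gradedness, left-multiply by ghost paths of length $k$ to land in $I_0$, then multiply back --- is the paper's strategy. But the reassembly step as you describe it has a genuine gap. You propose to recover $x$ by summing $\gamma\gamma^* x$ over a family of paths $\gamma$ that ``exhaust the relevant range vertices'' and invoking relation~(4) of Definition~\ref{Leavitt-E-fam-def} to reconstitute the vertex idempotents; that would require $v = \sum_{\{\gamma \in E^k : s(\gamma)=v\}}\gamma\gamma^*$, which fails whenever some vertex reachable from $v$ in fewer than $k$ steps is singular (and is not even a finite sum if such a vertex is an infinite emitter). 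Your fallback for the singular case --- take $\gamma = \alpha_i$ for a term with $\alpha_i$ of maximal length --- does not produce elements of $I_0$ either: if $|\alpha_i| > k$ then $\alpha_i^* x$ is homogeneous of degree $k - |\alpha_i| < 0$, not of degree $0$, so it is not an element of $I_0$ and the induction you would need is not set up.

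The fix, which is what the paper does, is to avoid any completeness relation altogether. Since $x$ is homogeneous of degree $k>0$, every monomial $r_i\alpha_i\beta_i^*$ appearing in $x$ has $|\alpha_i| = |\beta_i| + k \geq k$, so each $\alpha_i$ has a unique prefix of length exactly $k$. Grouping the monomials by these prefixes writes $x = \sum_{j=1}^n \mu_j x_j$ with the $\mu_j \in E^k$ distinct and each $x_j \in L_R(E)_0$ satisfying $r(\mu_j)x_j = x_j$. For distinct paths of the same length, \eqref{mult-rules-eq} gives $\mu_j^*\mu_i = \delta_{ij}\, r(\mu_j)$, hence $\mu_j^* x = x_j \in I \cap L_R(E)_0 = I_0$ and $x = \sum_{j} \mu_j(\mu_j^* x) \in \langle I_0\rangle$. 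Only the finitely many prefixes that actually occur in $x$ are needed, no decomposition $x = \sum_{v,w} vxw$ is required, and singular vertices cause no trouble. The case $k<0$ is symmetric, factoring length-$|k|$ prefixes out of the $\beta_i$'s and multiplying by real paths on the right.
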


\begin{proof}
Let $k >0$.  Given $x \in I_k :=I \cap  L_R(E)_k$, we may write $x = \sum_{i=1}^n \alpha_i x_i$, where $x_i \in L_R(E)_0$ and $\alpha_i \in E^k$ for all $1 \leq i \leq n$, and $\alpha_i \neq \alpha_j$ for $i \neq j$.  Then for any $1 \leq j \leq n$ we have $$x_j = \alpha_j^* \left(  \sum_{i=1}^n \alpha_i x_i \right) = \alpha_j^* x \in I.$$  Thus $x_j \in I_0$ and $I_k = L_R(E)_k I_0$.  Similarly, $I_{-k} = I_0 L_R(E)_{-k}$.  Since $I$ is a graded ideal, $I = \bigoplus_{k \in \Z} I_k$, and $I$ is generated as an ideal by $I_0$.
\end{proof}

\begin{lemma} \label{zero-part-zero}
Let $E$ be a graph, and let $R$ be a commutative ring with unit.  If $x \in L_R(E)_0$ and $x \neq 0$, then there exists $\alpha, \beta \in E^*$ such that $\alpha^* x \beta = rv$ for some $v \in E^0$ and some $r \in R \setminus \{ 0 \}$.
\end{lemma}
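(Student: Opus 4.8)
The plan is to take a nonzero element $x \in L_R(E)_0$, which by Proposition~\ref{LPA-exists} can be written as $x = \sum_{k=1}^N r_k \alpha_k \beta_k^*$ with $r_k \in R \setminus \{0\}$, $\alpha_k, \beta_k \in E^*$, $r(\alpha_k) = r(\beta_k)$, and $|\alpha_k| = |\beta_k|$ (using the degree-zero condition), and to "extract" a single term of the form $rv$ by left- and right-multiplying by suitable ghost and real paths $\alpha^*, \beta$. The multiplication rules in equation~\eqref{mult-rules-eq} tell us exactly what $\alpha^*(\alpha_k \beta_k^*)\beta$ is: choosing $\alpha, \beta$ judiciously should kill all but one summand and collapse the survivor to $rv$.

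First I would reduce to a convenient normal form. Let me think about what goes wrong...

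Let me reconsider. The element is $x = \sum r_k \alpha_k \beta_k^*$ with all $|\alpha_k| = |\beta_k|$. I want to find $\alpha, \beta$ with $\alpha^* x \beta = rv$.

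The plan is to argue by induction on the maximal length $M := \max_k |\alpha_k|$ appearing in a representation $x = \sum_{k=1}^n r_k \alpha_k \beta_k^*$ with $r_k \in R \setminus \{0\}$ and distinct pairs $(\alpha_k, \beta_k)$. Since $x \in L_R(E)_0$, the grading of Proposition~\ref{L(E)-Z-graded} lets me assume $|\alpha_k| = |\beta_k|$ for every $k$, so that each product $\alpha_j^* x \beta_j$ again lies in $L_R(E)_0$. The basic move is to multiply $x$ on the left by $\alpha_j^*$ and on the right by $\beta_j$ for a cleverly chosen term $j$ and to simplify using the multiplication rule~\eqref{mult-rules-eq}. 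If $M = 0$ every term is a vertex and $v\,x\,v = r v$ for any vertex term $r v$, so the content is in the inductive step.

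The main step is to choose $j$ with $|\alpha_j|$ \emph{minimal}. Expanding $\alpha_j^* x \beta_j = \sum_k r_k (\alpha_j^* \alpha_k)(\beta_k^* \beta_j)$ via~\eqref{mult-rules-eq}, minimality of $|\alpha_j|$ forces every cross term in which $\alpha_k$ or $\beta_k$ is a prefix of $\alpha_j$ or $\beta_j$ to come only from $k = j$, contributing $r_j\, r(\alpha_j) = r_j v$; the only other surviving terms arise when $\alpha_j$ and $\beta_j$ are proper prefixes of $\alpha_k$ and $\beta_k$, and these have the form $r_k \eta_k \zeta_k^*$ with $|\eta_k| = |\zeta_k| = |\alpha_k| - |\alpha_j| > 0$. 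Thus, provided the minimal length is at least $1$, the element $x' := \alpha_j^* x \beta_j$ equals $r_j v$ plus terms of strictly positive length, so $x'$ is nonzero (its unique length-zero summand is $r_j v$) and its maximal length is at most $M-1$. The inductive hypothesis gives $\gamma, \delta$ with $\gamma^* x' \delta = r v$, and then $(\alpha_j \gamma)^* x (\beta_j \delta) = r v$ finishes this case.

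The remaining possibility is that the minimal length is $0$, i.e. $x$ has a vertex term $r_1 v$. Here I first pass to the corner $v\,x\,v$, which retains $r_1 v$ and discards every term not beginning and ending at $v$; since $r_1 v$ is the only length-zero term left, $v\,x\,v \neq 0$. I then split on the type of $v$: if $v$ is a sink, no positive-length term can start at $v$, so $v\,x\,v = r_1 v$ and we are done; if $v$ is an infinite emitter, I choose an edge $g$ with $s(g)=v$ whose initial edge differs from those of the finitely many surviving terms, so that $g^* \alpha_k = 0$ annihilates them all and $g^* x g = r_1\, r(g)$; and if $v \in E^0_\reg$, I rewrite $r_1 v = r_1 \sum_{s(e)=v} e e^*$ using the Cuntz--Krieger relation, which makes every term of $v\,x\,v$ have positive length and returns me to the generic step above (now with minimal length $\geq 1$), hence to a strict drop in $M$.

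The step I expect to be the real obstacle is controlling \emph{cancellation} among terms sharing common prefixes or suffixes: a naive attempt to isolate a single summand can annihilate $x$ entirely (for instance $\alpha \beta^* - \alpha' \beta'^*$ with a common suffix is killed by the obvious two-sided multiplication). Choosing the term of minimal length is exactly what prevents the dangerous vertex-producing cancellations, while the passage to a corner together with the sink / infinite-emitter / Cuntz--Krieger trichotomy disposes of the length-zero terms. That each intermediate element is genuinely nonzero—so the induction does not collapse—rests on the linear independence of paths and of ghost paths (Proposition~\ref{lin-ind-prop}) and on the nonvanishing $r v \neq 0$ established in Proposition~\ref{LPA-exists}.
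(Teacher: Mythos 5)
Your overall architecture (induct on monomial length, use $\alpha^*(\cdot)\beta$ to push $x$ down to a multiple of a vertex, and split into sink / infinite-emitter / regular-vertex cases) matches the paper's, but there is a genuine gap at the two places where you assert nonvanishing: the claim that $x' := \alpha_j^* x \beta_j$ is nonzero ``because its unique length-zero summand is $r_j v$,'' and the parallel claim that $v\,x\,v \neq 0$. The monomials $\alpha\beta^*$ are \emph{not} linearly independent in $L_R(E)$: at a regular vertex $v$ the relation $v = \sum_{s(e)=v} ee^*$ lets a nonzero multiple of a vertex equal a sum of strictly positive-length terms $\eta\zeta^*$, so an expression $r_j v + \sum_k r_k \eta_k \zeta_k^*$ with $|\eta_k|=|\zeta_k|\geq 1$ and all coefficients nonzero can perfectly well be zero. (Proposition~\ref{lin-ind-prop} gives independence of the paths and of the ghost paths separately; it says nothing about products $\alpha\beta^*$.) Concretely, take the graph with edges $e_1 : u_1 \to v$, $e_2 : u_2 \to v$, $f : v \to w$, where $f$ is the only edge out of $v$, and set $x = r e_1 e_1^* - r (e_1 f)(e_1 f)^* + r e_2 e_2^*$. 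This is a legitimate normal form (distinct pairs, nonzero coefficients, $|\alpha_k|=|\beta_k|$), and $x = r u_2 \neq 0$ since $e_1 e_1^* = e_1 f f^* e_1^*$; the minimal length is $1$, and choosing the term $(e_1,e_1)$ gives $x' = e_1^* x e_1 = rv - rff^* = 0$, so the induction collapses. Your closing remark that ``choosing the term of minimal length is exactly what prevents the dangerous vertex-producing cancellations'' is precisely the assertion that fails: longer terms extending the minimal one can collapse via the Cuntz--Krieger relation and cancel the vertex. The proof neither specifies how to choose among several minimal-length terms nor shows that a good choice exists, and establishing that is essentially the whole difficulty of the lemma.

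The paper's proof gets around this with a different reduction at the crucial point. After expanding every vertex term at a regular vertex via $v=\sum_{s(e)=v}ee^*$, it regroups $x$ as $\sum_{i,j} e_i x_{i,j} f_j^*$ according to the \emph{first real edge and first ghost edge}, discarding the pairs $(i,j)$ for which $e_i x_{i,j} f_j^* = 0$. Distinct groups are mutually orthogonal under $e_i^*(\cdot)f_j$ (since $e_i^* e_{i'} = 0$ for $i \neq i'$), so $x \neq 0$ forces at least one surviving group, and then $e_1^* x f_1 = r(e_1)\, x_{1,1}\, r(f_1)$ is a \emph{provably nonzero} element of strictly lower complexity to which the inductive hypothesis applies. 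That orthogonal grouping, rather than multiplication by a single chosen monomial, is the missing ingredient; to repair your argument you would need to replace the minimal-length-term move by a step with this orthogonality property (or otherwise prove that some choice of minimal term yields a nonzero corner).
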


\begin{proof}
Define $\G_N := \algspan_{R} \{ \alpha \beta^* : \alpha, \beta \in E^m \text{ for } 1 \leq m \leq N \}$.  Then $L_R(E)_0 = \bigcup_{N=0}^\infty \G_N$.  We will prove by induction on $N$ that if $x \in \G_N$ and $x \neq 0$, then there exists $\alpha, \beta \in E^*$ such that $\alpha^* x \beta = rv$ for some $v \in E^0$ and some $r \in R \setminus \{ 0 \}$.  In the base case we have $N=0$, and $x = \sum_{i=1}^n r_i v_i$ for $v_i \in E^0$ and nonzero $r_i \in R$ with $v_i \neq v_j$ for $i \neq j$.  If we let $\alpha = \beta = v_1$, then $\alpha^* x \beta = r_1 v_1$.  

In the inductive step, we assume that for all nonzero $y \in \G_{N-1}$ there exists $\alpha', \beta' \in E^*$ such that $(\alpha')^* y \beta' = rv$ for some $v \in E^0$ and some $r \in R \setminus \{ 0 \}$.  Suppose that $x \in \G_N$ and $x \neq 0$.  Then we can write $$x = \sum_{i=1}^M r_i \alpha_i \beta_i^* + \sum_{j=1}^P s_j v_j,$$ for $\alpha, \beta \in E^*$ with $| \alpha_i | = | \beta_i | \geq 1$, $v_j \in E^0$ with $v_j \neq v_{j'}$ for $j \neq j'$, and $r_i, s_j \in R \setminus \{ 0 \}$.  If any $v_j$ is a sink, we may let $\alpha = \beta = v_j$, and then $\alpha^* x \beta = s_j v_j$.  If any $v_j$ is an infinite emitter, then we may choose an edge $e \in E^1$ with $s(e) = v_j$ and $e$ not equal to any edge appearing in any of the $\alpha_i$'s.  If we let $\alpha = \beta = e$, then $\alpha^* x \beta = e^* s_j v_j e = s_j r(e)$.  The only other case to consider is when every $v_j$ is a regular vertex (i.e., neither a sink nor an infinite emitter).  In this case we may use the relation $v_j = \sum_{s(e)=v_j} ee^*$ to write $x$ as a linear combination of elements $\gamma \delta^*$ where $\gamma, \delta \in E^*$ with $|\gamma| = |\delta | \geq 1$.  By regrouping the elements in this linear combination, we may write $$x = \sum_{i=1}^P \sum_{j=1}^Q e_i x_{i,j} f_j^*$$ where $e_i, f_i \in E^1$ with $e_i \neq e_{i'}$ for $i \neq i'$ and $f_j \neq f_{j'} $ for $j \neq j'$; and $x_{i,j} \in \G_{N-1}$ with $e_i x_{i,j} f_j^* \neq 0$ for all $i,j$.    Since $e_1 x_{1,1} f_1^* \neq 0$, it follows that $r(e_1) x_{1,1} r(f_1) \neq 0$.  Because $r(e_1) x_{1,1} r(f_1) \neq 0$ and $r(e_1) x_{1,1} r(f_1) \in \G_{N-1}$, the inductive hypothesis implies that there exists $\alpha', \beta' \in E^*$ such that $(\alpha')^* r(e_1) x_{1,1} r(f_1) \beta' = rv$ for some $v \in E^0$ and some $r \in R \setminus \{ 0 \}$.  If we let $\alpha := e_1 \alpha'$ and $\beta := f_1 \beta'$, then $$\alpha^* x \beta = (\alpha')^* e_1^* x  f_1 \beta' = (\alpha')^* e_1^*e_1 x_{1,1} f_1^* f_1 \beta' = (\alpha')^* r(e_1) x_{1,1} r(f_1) \beta' = rv.$$  The Principle of Mathematical Induction shows that the claim holds for all $N$, and hence the lemma holds for all nonzero $x$ in $L_R(E)_0$.
\end{proof}

\begin{theorem}[Graded Uniqueness Theorem] \label{GUT}
Let $E$ be a graph, and let $R$ be a commutative ring with unit.  If $S$ is a graded ring and $\phi : L_R(E) \to S$ is a graded ring homomorphism with the property that $\phi(rv) \neq 0$ for all $v \in E^0$ and for all $r \in R \setminus \{ 0 \}$, then $\phi$ is injective.
\end{theorem}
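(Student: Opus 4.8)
The plan is to show that $\ker \phi = \{0\}$. Set $I := \ker \phi$. Since $\phi$ is a graded ring homomorphism, its kernel is a graded ideal of $L_R(E)$, as observed in the discussion preceding Lemma~\ref{ideal-gen-by-zero-part}. By that lemma, $I$ is generated as an ideal by its degree-zero part $I_0 := I \cap L_R(E)_0$; in particular, if $I_0 = \{0\}$, then $I$ is the ideal generated by $\{0\}$, so $I = \{0\}$ and $\phi$ is injective. Thus it suffices to prove that $I_0 = \{0\}$.

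To this end I would argue by contradiction. Suppose $I_0 \neq \{0\}$ and choose a nonzero $x \in I_0 \subseteq L_R(E)_0$. By Lemma~\ref{zero-part-zero} there exist $\alpha, \beta \in E^*$ together with a vertex $v \in E^0$ and a scalar $r \in R \setminus \{0\}$ such that $\alpha^* x \beta = rv$. Since $I$ is an ideal and $x \in I$, the element $\alpha^* x \beta$ again lies in $I$, whence $rv \in I = \ker \phi$ and therefore $\phi(rv) = 0$. This contradicts the hypothesis that $\phi(rv) \neq 0$ for all $v \in E^0$ and all $r \in R \setminus \{0\}$. Hence $I_0 = \{0\}$, and the preceding paragraph shows $\phi$ is injective.

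The substance of the argument is carried entirely by the two preceding lemmas, so the theorem itself is short: Lemma~\ref{ideal-gen-by-zero-part} reduces the question of injectivity to a statement about degree-zero elements, and Lemma~\ref{zero-part-zero} reduces any nonzero degree-zero element to a scalar multiple $rv$ of a vertex. The one place genuinely requiring work — and the real engine of the whole proof — is the inductive reduction inside Lemma~\ref{zero-part-zero}, where one must treat sinks, infinite emitters, and regular vertices separately and peel off leading and trailing edges to lower the length. Granting those lemmas, the hypothesis $\phi(rv) \neq 0$ is revealed to be exactly calibrated to the situation: it is precisely the condition that forbids $\ker \phi$ from containing any nonzero degree-zero element, and hence any nonzero element at all.
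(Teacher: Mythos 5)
Your proof is correct and follows essentially the same route as the paper: both reduce to the degree-zero part via Lemma~\ref{ideal-gen-by-zero-part} and then use Lemma~\ref{zero-part-zero} to produce an element $rv$ in the kernel, contradicting the hypothesis. The only cosmetic difference is the order of the two steps and that you note $rv\in\ker\phi$ directly from the ideal property rather than computing $\phi(\alpha^*)\phi(x)\phi(\beta)=0$.
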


\begin{proof}
Suppose that $x \in L_R(E)_0 \cap \ker \phi$.  If $x$ is nonzero, then by Lemma~\ref{zero-part-zero} there exists $\alpha, \beta \in E^*$ such that $\alpha^* x \beta = rv$ for some $v \in E^0$ and some $r \in R \setminus \{ 0 \}$.  But then $\phi(rv) = \phi(\alpha^* x \beta) = \phi(\alpha^*) \phi(x) \phi(\beta) = 0$, which is a contradiction.  Hence $x = 0$, and $L_R(E)_0 \cap \ker \phi = \{ 0 \}$.

Since $\phi$ is a graded ring homomorphism, $\ker \phi$ is a graded ideal of $L_R(E)$.  It follows from Lemma~\ref{ideal-gen-by-zero-part} that $\ker \phi$ is generated as an ideal by $L_R(E)_0 \cap \ker \phi = \{ 0 \}$.  Thus $\ker \phi = \{ 0 \}$, and $\phi$ is injective.
\end{proof}

\begin{corollary}
Let $E$ be a graph, and let $K$ be a field.  If $S$ is a graded ring and $\phi : L_K(E) \to S$ is a graded ring homomorphism with the property that $\phi(v) \neq 0$ for all $v \in E^0$, then $\phi$ is injective.
\end{corollary}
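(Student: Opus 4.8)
\emph{Proof proposal.} The plan is to deduce this corollary directly from the Graded Uniqueness Theorem (Theorem~\ref{GUT}) by showing that, when the coefficient ring is a field, the single hypothesis $\phi(v) \neq 0$ for all $v \in E^0$ already forces the apparently stronger condition $\phi(rv) \neq 0$ for all $v \in E^0$ and all $r \in K \setminus \{0\}$. Once this is established, Theorem~\ref{GUT} applied with $R = K$ immediately yields that $\phi$ is injective, so essentially all of the work lies in the reduction.

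The one subtlety to keep in mind is that $\phi$ is only assumed to be a graded \emph{ring} homomorphism, not a $K$-linear map, so one cannot simply factor a scalar out of $\phi(rv)$. Instead I would exploit the internal ring structure of $L_K(E)$. Fix $v \in E^0$ and $r \in K \setminus \{0\}$. Since $K$ is a field, $r^{-1}$ exists in $K$, and using the idempotent relation $v^2 = v$ together with the $R$-algebra axioms $r \cdot (xy) = (r \cdot x)y = x(r \cdot y)$ and $1 \cdot x = x$, one computes
\[
(r^{-1}v)(rv) = (r^{-1}r) \cdot (vv) = 1 \cdot v = v .
\]
Thus $v$ is expressed as a product of two elements of $L_K(E)$, one of which is $rv$.

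Applying the ring homomorphism $\phi$ to this identity gives $\phi(v) = \phi(r^{-1}v)\,\phi(rv)$. If we had $\phi(rv) = 0$, then the right-hand side would vanish, forcing $\phi(v) = 0$ and contradicting the hypothesis $\phi(v) \neq 0$. Hence $\phi(rv) \neq 0$ for every $v \in E^0$ and every $r \in K \setminus \{0\}$, which is exactly the hypothesis required by Theorem~\ref{GUT}. Invoking that theorem with $R = K$ then completes the proof.

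I do not expect any genuine obstacle here; the corollary is simply the field case of Theorem~\ref{GUT}. The only point requiring care is the observation just noted, that $\phi$ need not respect scalar multiplication, which is why the argument is phrased through the factorization $(r^{-1}v)(rv) = v$ rather than by pulling scalars through $\phi$ directly.
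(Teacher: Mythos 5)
Your proposal is correct and is essentially the argument the paper intends (the corollary is stated without proof as an immediate consequence of Theorem~\ref{GUT}): the identity $(r^{-1}v)(rv)=v$ reduces the hypothesis $\phi(v)\neq 0$ to the hypothesis $\phi(rv)\neq 0$ required by the theorem. Your explicit attention to the fact that $\phi$ is only a ring homomorphism, so scalars cannot be pulled through $\phi$, is exactly the right point of care.
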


\begin{remark} \label{GUT-better-rem}
The Graded Uniqueness Theorem for Leavitt path algebras may be thought of as an analogue of the Gauge-Invariant Uniqueness Theorem for graph $C^*$-algebras, with the grading playing the role of the gauge action.      

In \cite{CK} Cuntz and Krieger showed that if $A$ is a finite $\{0,1\}$-matrix satisfying Condition~(I), then there is a unique $C^*$-algebra generated by a nonzero Cuntz-Krieger $A$-family, which they denote by $\mathcal{O}_A$.  Universal Cuntz-Krieger algebras of finite $\{0,1\}$-matrices were introduced in \cite{aHR}, and a Gauge-Invariant Uniqueness Theorem for these algebras was proven in \cite[Theorem~2.3]{aHR}.  A Gauge-Invariant Uniqueness Theorem for $C^*$-algebras of row-finite graphs was obtained in \cite[Theorem~2.1]{BPRS}, and this was extended to $C^*$-algebras of non-row-finite graphs in \cite[Theorem~2.1]{BHRS}.  Furthermore, the Gauge-Invariant Uniqueness Theorem was generalized to Cuntz-Krieger algebras of infinite matrices in \cite[Theorem~2.7]{RS} and to Cuntz-Pimnser algebras in \cite[Theorem~4.1]{FMR} and \cite[Theorem~6.4]{Kat4}.

In the Leavitt path algebra setting, the gauge action is replaced by a $\mathbb{Z}$-grading --- in fact, if one views the Leavitt path algebra $L_\C(E)$ as a dense $*$-subalgebra of the graph $C^*$-algebra $C^*(E)$, then the gauge action on $C^*(E)$ induces a $\mathbb{Z}$-grading on $L_\C(E)$ (see the proof of \cite[Theorem~7.3]{Tom10} for details).  In \cite[Theorem~5.1]{AMP}, Ara, Moreno, and Pardo proved the Graded Uniqueness Theorem for $L_K(E)$, where $K$ is a field and $E$ is a row-finite graph.  A proof of the Graded Uniqueness Theorem for $L_K(E)$, where $K$ is a field and $E$ is an arbitrary graph, was given by the author in \cite[Theorem~4.8]{Tom10}.  The proof in Theorem~\ref{GUT} uses different techniques than \cite{AMP} or \cite{Tom10}.
\end{remark}

\section{The Cuntz-Krieger Uniqueness Theorem} \label{CKUT}

Recall that a graph $E$ is said to satisfy Condition~(L) if every cycle in $E$ has an exit.  (See Definition~\ref{graph-prelim-defs} for more details.)

\begin{lemma} \label{nonreturning-path-lemma}
Suppose $E$ is a graph satisfying Condition~(L).  If $F$ is a finite subset of $E^* \setminus E^0$ and $v \in E^0$, then there exists a path $\alpha \in E^*$ such that $s(\alpha)=v$ and for every $\mu \in F$ we have $\alpha^* \mu \alpha = 0$.
\end{lemma}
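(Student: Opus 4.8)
The plan is to determine exactly when the element $\alpha^*\mu\alpha$ fails to vanish, and then to build a suitable $\alpha$ combinatorially using Condition~(L). First I would analyze the product. Fix $\alpha \in E^*$ with $s(\alpha)=v$ and $\mu \in E^*\setminus E^0$, and group the product as $\alpha^*(\mu\alpha)$. The concatenation $\mu\alpha$ is nonzero only when $r(\mu)=s(\alpha)=v$, so any $\mu$ with $r(\mu)\neq v$ is automatically harmless. Assuming $r(\mu)=v$ and applying the multiplication rules \eqref{mult-rules-eq} (equivalently, using that $\gamma^*\delta\neq 0$ for paths $\gamma,\delta$ precisely when one is a prefix of the other), one finds $\alpha^*(\mu\alpha)\neq 0$ exactly when $\alpha$ is a prefix of $\mu\alpha$. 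Writing $\mu=g_1\cdots g_m$ and $\alpha=e_1\cdots e_n$, a short index comparison then forces $g_1=e_1$, hence $s(\mu)=v$, so that $\mu$ is a cycle based at $v$, and forces $\alpha$ to be a prefix of the $p$-fold concatenation $\mu^p$ for all $p$ with $|\mu^p|\geq|\alpha|$. Thus $\alpha^*\mu\alpha\neq 0$ if and only if $\mu$ is a cycle based at $v$ and $\alpha$ is a prefix of $\mu^p$ for large $p$.

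This characterization reduces the problem. Let $F':=\{\mu\in F : \mu \text{ is a cycle based at }v\}$, a finite set, say $F'=\{\mu_1,\dots,\mu_k\}$. By the previous paragraph, every $\mu\in F\setminus F'$ satisfies $\alpha^*\mu\alpha=0$ for \emph{any} $\alpha$ with $s(\alpha)=v$, so it suffices to produce one path $\alpha$ with $s(\alpha)=v$ that is not a prefix of $\mu_i^{\,p}$ (for large $p$) for any $1\leq i\leq k$.

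I would construct such an $\alpha$ by handling the cycles $\mu_1,\dots,\mu_k$ one at a time, using the key monotonicity observation that extending a path preserves the property of \emph{not} being a prefix of a fixed repetition. Set $\alpha_0:=v$. Given $\alpha_{j-1}$ with $s(\alpha_{j-1})=v$ that is not a prefix of $\mu_i^{\,p}$ for $i<j$, proceed as follows. If $\alpha_{j-1}$ is already not a prefix of $\mu_j^{\,p}$, set $\alpha_j:=\alpha_{j-1}$. Otherwise $\alpha_{j-1}$ is a prefix of the repetition of $\mu_j=g_1\cdots g_m$; by Condition~(L) this cycle has an exit, namely an edge $f$ with $s(f)=s(g_{i_0})$ and $f\neq g_{i_0}$ for some $i_0$. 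Continue $\alpha_{j-1}$ along the repetition of $\mu_j$ until reaching the vertex $s(g_{i_0})$ immediately before an occurrence of $g_{i_0}$ (possible by periodicity), giving a prefix $\alpha_{j-1}\eta$ of the repetition, and set $\alpha_j:=\alpha_{j-1}\eta f$. Since $\alpha_j$ deviates from the repetition of $\mu_j$ at the edge $f$, it is not a prefix of $\mu_j^{\,p}$; and because $\alpha_j$ extends $\alpha_{j-1}$, it remains not a prefix of $\mu_i^{\,p}$ for $i<j$. Taking $\alpha:=\alpha_k$ yields $s(\alpha)=v$ and $\alpha^*\mu\alpha=0$ for every $\mu\in F'$, while the reduction handles $\mu\in F\setminus F'$, completing the proof.

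The delicate points I expect are in the first and third steps. In the first, the bookkeeping must genuinely confirm that $\alpha^*\mu\alpha\neq 0$ forces $\mu$ to be a cycle at $v$ and $\alpha$ to lie along it; this is the step that channels the hypothesis $\mu\in E^*\setminus E^0$ and pins down the role of the base point $v$. In the third, the main obstacle is ensuring that deviating to escape $\mu_j$ does not accidentally re-synchronize $\alpha$ with an earlier $\mu_i$, and this is precisely what the monotonicity observation guarantees, making the one-cycle-at-a-time induction valid.
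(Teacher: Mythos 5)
Your argument is correct, but it takes a genuinely different route from the paper's. You first establish a complete characterization of the obstruction --- $\alpha^*\mu\alpha\neq 0$ forces $r(\mu)=s(\mu)=v$, so $\mu$ is a closed path based at $v$, and forces $\alpha$ to be a prefix of $\mu^p$ for large $p$ --- and then eliminate the finitely many offending closed paths one at a time, using the exit supplied by Condition~(L) together with the observation that extending a path preserves the property of not lying along a given periodic word. The paper instead constructs a single path in one shot via a case split: if $v$ reaches a sink, take $\alpha$ ending at that sink; otherwise either a sufficiently long path from $v$ with no repeated vertices works, or one chooses a minimal path $\tau$ to a minimal cycle $\beta$ with exit $f$ and sets $\alpha=\tau\beta\cdots\beta\beta'f$, using the minimality of $\tau$ and $\beta$ to guarantee that $f$ occurs nowhere earlier in $\alpha$, so that $\mu\alpha=\alpha\nu$ is impossible. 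Your version buys a sharper intermediate statement (exactly which $\mu$ are dangerous, namely closed paths at $v$ that $\alpha$ tracks) and avoids both the sink/no-sink dichotomy and the minimality bookkeeping; the paper's version avoids the word-combinatorial step showing that $\mu\alpha=\alpha\nu$ pins $\alpha$ to the periodic repetition of $\mu$. Both rest on the same multiplication rule \eqref{mult-rules-eq}. Two small points worth making explicit in a polished write-up: the degenerate case $|\alpha|=0$ in your prefix comparison (where the identity $g_1=e_1$ is vacuous but the conclusion that $\mu$ is closed at $v$ still holds, and $v$ is trivially a prefix of every $\mu^p$, so the base case of your induction is genuinely Case B), and the fact that Condition~(L) applies to each $\mu_j$ because the paper's notion of cycle is any closed path, not only a simple one.
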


\begin{proof}
Given $v \in E^0$ and a finite subset $F \subseteq E^*$, consider two cases.

\noindent \textsc{Case I:} There is a path from $v$ to a sink in $E$.
In this case, let $\alpha$ be a path with $s(\alpha) = v$ and $r(\alpha)$ a sink.  For any $\mu \in F$, we see that $\alpha^* \mu \alpha$ is nonzero if and only if there exists $\nu \in E^* \setminus E^0$ such that $\mu \alpha = \alpha \nu$, which is impossible since $r(\alpha)$ is a sink.  Thus $\alpha^* \mu \alpha = 0$.

\noindent \textsc{Case II:} There is no path from $v$ to a sink in $E$.

Let $M = \max \{ | \mu | : \mu \in F \} + 1$.  If there is a path $\alpha = \alpha_1 \ldots \alpha_M \in E^M$ with $s(\alpha) = v$ and no repeated vertices, then for any $\mu \in F$ we see that $\alpha^* \mu \alpha$ is nonzero if and only if there exists $\nu \in E^* \setminus E^0$ such that $\mu \alpha = \alpha \nu$, which is impossible since this would imply that $s(\alpha_1) = s(\alpha_j)$ for some $j \geq 2$ contradicting that $\alpha$ has no repeated vertices.  Thus $\alpha^* \mu \alpha = 0$.

Otherwise, every path $E^M$ with $s(\lambda) = v$ has repeated vertices, and there exists a path from $v$ to the base point of a cycle in $E$.  Choose a path $\tau$ of minimal length such that $s(\tau) = v$ and $r(\tau)$ is the base point of a cycle.  Choose a cycle $\beta$ of minimal length based at $r(\tau)$.  Let $f$ be an exit for $\beta$, and let $\beta'$ be the segment of $\beta$ from $r(\tau)$ to $s(f)$.  By the minimality of $\tau$, the edge $f$ is not equal to any of the edges in the path $\tau$.  Likewise, by the minimality of $\beta$, the edge $f$ is not equal to any of the edges on the cycle $\beta$ or the path $\beta'$.  Thus the path $\alpha := \tau \beta \beta \ldots \beta \beta' f$ has the property that $f$ is not equal to any edge $\alpha_i$ for $1 \leq i \leq |\alpha|-1$.  By choosing sufficiently many repetitions of the cycle $\beta$ we can ensure that $\alpha$ has length greater than or equal to $M$ (to avoid the possibility that $\alpha \in F$).  Then we have that $\alpha^* \mu \alpha$ is nonzero if and only if there exists $\nu \in E^* \setminus E^0$ such that $\mu \alpha = \alpha \nu$, which is impossible since this would imply that $f = \alpha_j$ for some $1 \leq j \leq |\alpha|-1$.  Thus $\alpha^* \mu \alpha = 0$.
\end{proof}

\begin{lemma} \label{shift-real-to-vertex-lem}
Let $E$ be a graph satisfying Condition~(L), and let $R$ be a commutative ring with unit.  If $x \in L_R(E)$ is a polynomial in only real edges and $x \neq 0$, then there exist paths $\alpha, \beta \in E^*$ such that $\alpha^* x \beta = rv$ for some $v \in E^0$ and some $r \in R \setminus \{ 0 \}$.
\end{lemma}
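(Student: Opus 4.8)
The plan is to reduce $x$ to a single vertex term by two successive multiplications. First I multiply on the left by the adjoint of a \emph{shortest} path occurring in $x$; this manufactures a degree-zero (vertex) term while keeping the element a polynomial in real edges. Then I invoke Condition~(L), in the form of Lemma~\ref{nonreturning-path-lemma}, to annihilate all of the remaining terms of positive length, leaving only a nonzero multiple of a vertex. Throughout, write $x = \sum_{i=1}^n r_i \alpha_i$ with the $\alpha_i \in E^*$ distinct and each $r_i \in R \setminus \{0\}$.

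For the first step, choose $\lambda := \alpha_{i_0}$ of minimal length among the $\alpha_i$ and put $v_0 := r(\lambda)$. By the multiplication rules \eqref{mult-rules-eq}, the product $\lambda^*\alpha_i$ is nonzero only if one of $\lambda,\alpha_i$ is an initial segment of the other; minimality of $\lambda$ forces $\lambda$ to be a prefix of $\alpha_i$, say $\alpha_i = \lambda\gamma_i$ with $\gamma_i \in E^*$ and $s(\gamma_i) = v_0$, whence $\lambda^*\alpha_i = \gamma_i$ contains no ghost edges. Setting $P := \{\, i : \lambda \text{ is a prefix of } \alpha_i \,\}$, we obtain that $y := \lambda^* x = \sum_{i \in P} r_i \gamma_i$ is again a polynomial in real edges. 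Its only term of length $0$ is $r_{i_0} v_0$ (from $i = i_0$, where $\gamma_{i_0} = v_0$), while every other surviving $\gamma_i$ has length $|\alpha_i| - |\lambda| \ge 1$. Since $r_{i_0} v_0 \neq 0$ by Proposition~\ref{LPA-exists} and it is the unique degree-zero contribution, it cannot be cancelled, so $y \neq 0$ and $F := \{\, \gamma_i : i \in P,\ i \neq i_0 \,\}$ is a finite subset of $E^* \setminus E^0$.

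For the second step, apply Lemma~\ref{nonreturning-path-lemma} to $F$ and the vertex $v_0$ to get a path $\alpha$ with $s(\alpha) = v_0$ and $\alpha^*\gamma\alpha = 0$ for all $\gamma \in F$ (if $F = \emptyset$, simply take $\alpha = v_0$). All positive-length terms of $y$ are then killed, and because $s(\alpha) = v_0$ gives $v_0\alpha = \alpha$ and $\alpha^*\alpha = r(\alpha)$, we get $\alpha^* y \alpha = r_{i_0}\, r(\alpha)$. Finally, since $r(\lambda) = v_0 = s(\alpha)$ the concatenation $\lambda\alpha$ is a genuine path with $(\lambda\alpha)^* = \alpha^*\lambda^*$, and recombining the two multiplications yields
\[
(\lambda\alpha)^*\, x\, \alpha \;=\; \alpha^*\lambda^* x\, \alpha \;=\; \alpha^* y\, \alpha \;=\; r_{i_0}\, r(\alpha).
\]
Thus the conclusion holds with the paths $\lambda\alpha$ and $\alpha$, the vertex $r(\alpha)$, and the scalar $r_{i_0} \in R \setminus \{0\}$.

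The substantive content is carried entirely by Lemma~\ref{nonreturning-path-lemma}, and hence by Condition~(L): once a vertex term has been isolated, that lemma does all the work. The points that require care are purely bookkeeping --- verifying that left-multiplication by $\lambda^*$ leaves no ghost edges (this is exactly why $\lambda$ must be chosen of minimal length) and confirming that the single degree-zero term $r_{i_0} v_0$ survives. The latter is immediate from its uniqueness together with $r_{i_0} v_0 \neq 0$, which may also be read off from the linear independence of paths established in Proposition~\ref{lin-ind-prop}.
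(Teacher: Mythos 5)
Your proof is correct, and it reaches the conclusion by a genuinely more direct route than the paper's. The paper argues by induction on $\deg x$: when $x$ has no degree-zero term it peels off a single leading edge ($e_1^*x = x_1$) and recurses, and only when a vertex term is present does it invoke Lemma~\ref{nonreturning-path-lemma}. You collapse the entire inductive descent into one multiplication: left-multiplying by $\lambda^*$ for $\lambda$ a \emph{minimal-length} path of $x$ manufactures the vertex term immediately, after which both arguments coincide in applying Lemma~\ref{nonreturning-path-lemma} to annihilate the positive-length remainder. The key points you flag are exactly the ones that matter --- minimality of $\lambda$ guarantees via \eqref{mult-rules-eq} that $\lambda^*\alpha_i$ is either $0$ or the genuine path $\gamma_i$ with $\alpha_i = \lambda\gamma_i$ (so no ghost edges survive), distinctness of the $\alpha_i$ guarantees that $r_{i_0}v_0$ is the unique degree-zero term, and the grading together with Proposition~\ref{LPA-exists} (or Proposition~\ref{lin-ind-prop}) shows it is not cancelled. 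Your handling of the edge case $F = \emptyset$ by taking $\alpha = v_0$ is also needed and correctly supplied. What the induction buys the paper is uniformity with the structure of the neighboring lemmas; what your version buys is brevity and a cleaner separation of the two ideas (isolate a vertex term, then use Condition~(L) once).
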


\begin{proof}
We will prove by induction on $N$ that if $x \in L_R(E)$ is a nonzero polynomial in only real edges with $\operatorname{deg} x = N$, then there exist paths $\alpha, \beta \in E^*$ such that $\alpha^* x \beta = rv$ for some $v \in E^0$ and some $r \in R \setminus \{ 0 \}$.  In the base case we have $\operatorname{deg} x = 0$, so that $x = \sum_{i=1}^M r_i v_i$ for $v_i \in E^0$ and nonzero $r_i \in R$ with $v_i \neq v_j$ for $i \neq j$.  If we let $\alpha = \beta = v_1$, then $\alpha^* x \beta = r_1 v_1$.

In the inductive step, we assume that our claim holds for all nonzero polynomials in real edges with degree $N-1$ or less.  Suppose $x \in L_R(E)$ is a nonzero polynomial in real edges with $\operatorname{deg} x = N$.  If $x$ has no terms of degree 0, then we may write $$x = \sum_{i=1}^M e_i x_i $$ with each $x_i$ a nonzero polynomial in real edges of degree $N-1$ or less, and $e_i \in E^1$ with $e_i \neq e_j$ for $i \neq j$.  Then $e_1^* x = x_1$ is a nonzero polynomial of degree $N-1$ or less, so by the inductive hypothesis there exists $\alpha', \beta \in E^*$ such that $(\alpha')^* x_1 \beta = rv$ for some $v \in E^0$ and $r \in R \setminus \{ 0 \}$.  If we let $\alpha := e_1 \alpha'$, then $\alpha^* x \beta = (\alpha')^* e_1^*x \beta = (\alpha')^* x_1 \beta =  rv$ and the claim holds.  On the other hand, if $x$ does have a term of degree 0, then we may write $$x = \sum_{i=1}^M r_i \alpha_i  + \sum_{j=1}^K s_j v_j$$ where the $\alpha_i$'s are paths of length 1 or greater, each $r_i, s_j \in R \setminus \{ 0 \}$, and the $v_j$'s are vertices with $v_j \neq v_{j'}$ for $j \neq j'$.  Let $F := \{ \alpha_i : 1 \leq i \leq M \}$.  By Lemma~\ref{nonreturning-path-lemma} there exists $\alpha \in E^*$ such that $s(\alpha)=v_1$ and for every $\alpha_i$ we have $\alpha^* \alpha_i \alpha = 0$.  If we let $\beta= \alpha$, then we have $$\alpha^* x \beta = \sum_{i=1}^M r_i \alpha^* \alpha_i \alpha + \sum_{j=1}^K s_j \alpha^* v_j \alpha = s_1 \alpha^* v_1 \alpha = s_1 r(\alpha).$$  By the Principle of Mathematical Induction, we may conclude that the claim holds for all $N$, and hence the lemma holds for all nonzero polynomials in only real edges.
\end{proof}

\begin{lemma} \label{xe-not-zero-lem}
Let $E$ be a graph and let $R$ be a commutative ring with unit.  Let $x \in L_R(E)$ and suppose that $x$ is a polynomial in real edges with $x \neq 0$.  If there exists $v \in E^0$ with $xv = x$, then for any $e \in E^1$ with $s(e) = v$ we have $xe \neq 0$.
\end{lemma}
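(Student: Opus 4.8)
The plan is to reduce everything to the linear independence of paths established in Proposition~\ref{lin-ind-prop}. The hypothesis $xv = x$ should first be used to show that every path occurring in $x$ has range $v$; once that is known, right-multiplication by $e$ merely appends the edge $e$ to each of these paths, and linear independence of paths will finish the job.

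Concretely, I would begin by writing $x = \sum_{i=1}^n r_i \alpha_i$ with the $\alpha_i \in E^*$ distinct paths and each $r_i \in R \setminus \{0\}$; since $x \neq 0$ we have $n \geq 1$. The path relations give $\alpha_i v = \delta_{r(\alpha_i), v}\,\alpha_i$, so $xv = \sum_{\{i : r(\alpha_i) = v\}} r_i \alpha_i$. Combining this with the hypothesis $xv = x$ yields $\sum_{\{i : r(\alpha_i) \neq v\}} r_i \alpha_i = 0$, and because the $\alpha_i$ are distinct paths with nonzero coefficients, Proposition~\ref{lin-ind-prop} forces that index set to be empty. Hence $r(\alpha_i) = v$ for every $i$. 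Now, since $s(e) = v = r(\alpha_i)$, each $\alpha_i e$ is a genuine path of length $|\alpha_i| + 1$, and $xe = \sum_{i=1}^n r_i (\alpha_i e)$. The paths $\alpha_i e$ are pairwise distinct, for deleting the terminal edge $e$ recovers $\alpha_i$, so $\alpha_i e = \alpha_j e$ would force $\alpha_i = \alpha_j$. Thus $xe$ is an $R$-linear combination of distinct paths with nonzero coefficients, and a second application of Proposition~\ref{lin-ind-prop} gives $xe \neq 0$.

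I do not expect a genuine obstacle here: the statement is essentially a bookkeeping consequence of linear independence of paths. The only points that need a small amount of care are the computation $\alpha_i v = \delta_{r(\alpha_i),v}\,\alpha_i$ (so that the hypothesis $xv=x$ really does constrain the ranges of the $\alpha_i$) and the injectivity of right-concatenation by $e$ on paths ending at $v$, which guarantees that $xe$ is honestly a sum over distinct paths rather than one that might collapse. Both are immediate from the definitions, so the proof will be short.
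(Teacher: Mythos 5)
Your proof is correct and rests on the same mechanism as the paper's: appending $e$ to distinct paths ending at $v$ produces distinct paths, and linear independence of paths then forces $xe \neq 0$. The only cosmetic difference is that the paper first reduces to a homogeneous $x$ via the $\Z$-grading and then runs the linear-independence computation inline (multiplying by $e^*\alpha_1^*$ and appealing to $r_1 r(e) \neq 0$ from Proposition~\ref{LPA-exists}), whereas you invoke Proposition~\ref{lin-ind-prop} directly, which packages that same computation.
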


\begin{proof}
Since $L_R(E)$ is graded with $L_R(E) = \bigoplus_{k \in \Z} L_R(E)_k$, it suffices to prove the claim when $x$ is homogeneous of degree $k$ for some $k \geq 0$.  In this case we may write $x = \sum_{i=1}^M r_i \alpha_i$ with each $r_i \in R \setminus \{ 0 \}$ and each $\alpha_i \in E^k$ with $\alpha_i \neq \alpha_{i'}$ for $i \neq i'$.  Since $xv = x$, we may also assume that $r(\alpha_i) = v$ for all $i$.  For any $e \in E^1$ with $s(e) = v$ we see that $\alpha_i e \in E^{k+1}$.  If $xe = 0$, then $$r_1 r(e) = e^* \alpha_1^* (r_1 \alpha_1 e) = e^* \alpha_1^* \left( \sum_{i=1}^M r_i \alpha_i \right) e = e^* \alpha_1^* (xe) = 0,$$ which contradicts Proposition~\ref{LPA-exists}.  Hence $xe \neq 0$.
\end{proof}

\begin{lemma} \label{shift-to-real-lem}
Let $E$ be a graph and let $R$ be a commutative ring with unit.  If $x \in L_R(E)$ and $x \neq 0$ then there exists $\gamma \in E^*$ such that $x \gamma \neq 0$ and $x \gamma$ is a polynomial in only real edges.
\end{lemma}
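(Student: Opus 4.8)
The plan is to right-multiply $x$ by a path long enough to absorb every ghost edge, controlling the process with the \emph{ghost-length filtration}
$$\F_n := \algspan_R \{ \alpha \beta^* : \alpha, \beta \in E^*,\ r(\alpha)=r(\beta),\ |\beta| \le n \}.$$
These are $R$-submodules with $\F_0 \subseteq \F_1 \subseteq \cdots$ and $\bigcup_n \F_n = L_R(E)$ by Proposition~\ref{LPA-exists}, and $\F_0$ consists exactly of the polynomials in real edges (together with $0$). The key observation, read off from the multiplication rules \eqref{mult-rules-eq}, is that right multiplication by a single edge lowers ghost length: if $\beta = e\beta'$ then $\beta^* e = (\beta')^*$, while $\beta^* e = 0$ when $\beta$ does not begin with $e$; hence $\F_n \cdot e \subseteq \F_{n-1}$ for every $e \in E^1$ and every $n \ge 1$. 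For $y \ne 0$ I write $m(y) := \min\{ n : y \in \F_n\}$, so that $m(y)=0$ precisely when $y$ is a nonzero polynomial in real edges, and $m(ye) \le m(y)-1$ whenever $m(y) \ge 1$.

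Since $L_R(E) = \bigoplus_{v \in E^0} L_R(E)v$ and $x \ne 0$, there is a vertex $v$ with $xv \ne 0$, so the set $\{\gamma \in E^* : x\gamma \ne 0\}$ is nonempty. I would choose $\gamma_0$ in this set minimizing $m(x\gamma_0)$, put $y := x\gamma_0$ and $v_0 := r(\gamma_0)$, so that $y \ne 0$ and $y = yv_0$, and then claim $m(y)=0$, which finishes the proof with $\gamma = \gamma_0$. Suppose instead $m(y) \ge 1$. If I can produce an edge $e$ with $ye \ne 0$, then $x(\gamma_0 e) = ye \ne 0$ while $m(ye) \le m(y)-1 < m(y)$, contradicting the minimality of $m(x\gamma_0)$; so it suffices to find such an edge.

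The search for the edge splits according to the type of $v_0$. If $v_0$ is a sink, then $y = yv_0$ lets me write $y$ with every ghost sourced at $v_0$, forcing every ghost to be trivial, so $y \in \F_0$ and $m(y)=0$, contradicting $m(y) \ge 1$. If $v_0$ is regular, the Cuntz--Krieger relation gives the finite expansion $y = yv_0 = \sum_{s(e)=v_0}(ye)e^*$, so some $ye \ne 0$. If $v_0$ is an infinite emitter, I would first write $y$ with every ghost sourced at $v_0$ and group by the first edge of each ghost as $y = p_0 + \sum_f y_f f^*$, where $p_0$ is the real part (so $p_0 v_0 = p_0$), the sum runs over the finitely many edges $f$ with $s(f)=v_0$ that actually occur, and each $y_f$ satisfies $y_f r(f) = y_f$. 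Because $v_0$ emits infinitely many edges, I may pick an edge $e$ with $s(e)=v_0$ occurring in none of the ghosts; then $ye = p_0 e$, which is nonzero by Lemma~\ref{xe-not-zero-lem} whenever $p_0 \ne 0$. If instead $p_0 = 0$, then some $y_{f_0} \ne 0$ and $y f_0 = y_{f_0} r(f_0) = y_{f_0} \ne 0$. Either way a surviving edge exists.

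The one genuine obstacle is the infinite-emitter case: there the relation $v_0 = \sum_{s(e)=v_0} ee^*$ is unavailable, so nonvanishing cannot be forced algebraically as for regular vertices. This is exactly where Lemma~\ref{xe-not-zero-lem}, together with the pigeonhole principle applied to the infinitely many edges out of $v_0$, does the work. Everything else is routine bookkeeping with \eqref{mult-rules-eq} and the filtration $\F_n$.
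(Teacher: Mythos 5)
Your proof is correct and takes essentially the same approach as the paper: the same filtration by ghost length (the paper's $\A_N$ is your $\F_n$), the same case analysis on whether $r(\gamma_0)$ is a sink, a regular vertex, or an infinite emitter, and the same appeal to Lemma~\ref{xe-not-zero-lem} to survive multiplication by an edge at an infinite emitter. The only difference is cosmetic: you package the descent as a minimal-counterexample argument on $m(x\gamma)$ where the paper runs an induction on $N$.
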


\begin{proof}
Define $$\A_N :=  \left\{ \sum_{i=1}^M r_i \alpha_i \beta_i^* :  r_i \in R, \ \alpha_i, \beta_i \in E^*, \text{ and }  | \beta_i | \leq N \text{ for } 1 \leq i \leq M \right\}.$$  Then $L_R(E) = \bigcup_{N=0}^\infty \A_N$.  We will prove by induction on $N$ that if $x \in \A_N$ and $x \neq 0$, then there exists $\gamma \in E^*$ such that $x \gamma \neq 0$ and $x \gamma$ is a polynomial in only real edges.  In the base case we have $N=0$, and $x = \sum_{i=1}^M r_i \alpha_i$ so that $x$ is a polynomial in real edges.  Choose $v \in E^0$ such that $xv \neq 0$.  Then $xv$ is a polynomial in only real edges, and the claim holds.

In the inductive step, we assume that for all nonzero $x' \in \A_{N-1}$, there exists $\gamma \in E^*$ such that $x' \gamma \neq 0$ and $x' \gamma$ is a polynomial in only real edges.  Given an element $x = \sum_{i=1}^M r_i \alpha_i \beta_i^* \in \A_N$, we may choose $v \in E^0$ such that $xv \neq 0$.    By regrouping terms, we may write $$xv = \sum_{j=1}^P  x_j e_j^* + y$$ where the $x_j$'s are polynomials in which each term has $N-1$ ghost edges or fewer (so that $x_j \in A_{N-1}$), each $e_j \in E^1$ with $s(e_j) = v$ and $e_j \neq e_{j'}$ for $j \neq j'$, and $y$ is a polynomial in only real edges with $yv = y$.  If $y = 0$, then $xve_1 = x_1 \neq 0$ and by the inductive hypothesis there exists $\gamma'$ such that $x_1 \gamma'$ is a nonzero polynomial in only real edges.  If $\gamma := e_1 \gamma'$, then $x \gamma = xve_1 \gamma' = x_1 \gamma'$ is a nonzero polynomial in only real edges.  

If $y \neq 0$, then we consider three possibilities for $v$.  If $v$ is a regular vertex, then $v = \sum_{s(e) = v} ee^*$ and $xv = \sum_{j=1}^P  x_j e_j^* + \sum_{s(e)=v} yee^*$ and by regrouping we are as in the situation described in the previous paragraph, so we may argue as done there.  If $v$ is a sink, then there are no edges whose source is $v$, so $xv = y$ and we may choose $\gamma := v$, and the claim holds.  If $v$ is an infinite emitter, then we may choose $e \in E^1$ with $s(e) = v$ and $e \neq e_j$ for all $1 \leq j \leq P$.  If we let $\gamma := e$, then $x\gamma = xe = xve = \sum_{j=1}^P  x_j e_j^*e + ye = ye$.  Since $y$ is a nonzero polynomial in only real edges with $yv=y$, it follows from Lemma~\ref{xe-not-zero-lem} that $ye$ is a nonzero polynomial in only real edges.  By the Principle of Mathematical Induction, we may conclude that the claim holds for all $N$, and hence the lemma holds for all nonzero $x \in L_R(E)$.
\end{proof}

\begin{theorem}[Cuntz-Krieger Uniqueness Theorem] \label{CKUT-thm}
Let $E$ be a graph satisfying Condition~(L), and let $R$ be a commutative ring with unit.  If $S$ is a ring and $\phi : L_R(E) \to S$ is a ring homomorphism with the property that  $\phi(rv) \neq 0$ for all $v \in E^0$ and for all $r \in R \setminus \{ 0 \}$, then $\phi$ is injective.
\end{theorem}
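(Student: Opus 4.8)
The plan is to argue by contradiction, exploiting the two structural lemmas proved above to reduce an arbitrary nonzero element of $\ker \phi$ to one of the form $rv$, where the hypothesis can be applied directly. The essential feature of the situation is that, unlike in the Graded Uniqueness Theorem, $\phi$ is not assumed to be graded, so we cannot split $\ker \phi$ into homogeneous components and pass to its degree-zero part via Lemma~\ref{ideal-gen-by-zero-part}. Instead, we must maneuver an arbitrary element into a workable normal form by multiplying on the left and right by suitable paths and ghost paths, taking advantage of the fact that $\ker \phi$ is a two-sided ideal and is therefore closed under such multiplications.

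Suppose, toward a contradiction, that $\phi$ is not injective, and choose $x \in \ker \phi$ with $x \neq 0$. First I would apply Lemma~\ref{shift-to-real-lem} to produce a path $\gamma \in E^*$ such that $x\gamma \neq 0$ and $x\gamma$ is a polynomial in only real edges. Since $\ker \phi$ is an ideal, $x\gamma \in \ker \phi$, so $\phi(x\gamma) = 0$ while $x\gamma$ remains nonzero.

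Next I would invoke Lemma~\ref{shift-real-to-vertex-lem}, which is precisely where Condition~(L) enters. Because $x\gamma$ is a nonzero polynomial in only real edges, the lemma yields paths $\alpha, \beta \in E^*$ with $\alpha^*(x\gamma)\beta = rv$ for some $v \in E^0$ and some $r \in R \setminus \{0\}$. Using again that $\ker \phi$ is an ideal, we have $\alpha^*(x\gamma)\beta \in \ker \phi$, and hence $\phi(rv) = \phi(\alpha^*(x\gamma)\beta) = 0$. This contradicts the hypothesis that $\phi(rv) \neq 0$ for all $v \in E^0$ and all $r \in R \setminus \{0\}$. Therefore $\ker \phi = \{0\}$ and $\phi$ is injective.

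Once the two lemmas are in hand, the theorem itself is short; the genuine difficulty is concentrated in Lemma~\ref{shift-real-to-vertex-lem} (and the combinatorial Lemma~\ref{nonreturning-path-lemma} on which it rests), where Condition~(L) is used to build a path along which a cycle cannot ``return,'' so that the ghost-edge cancellations collapse a real polynomial down to a single scalar multiple of a vertex. The main conceptual obstacle in assembling the argument is recognizing that the absence of the grading hypothesis forces a \emph{two-stage} reduction --- first clearing ghost edges via Lemma~\ref{shift-to-real-lem}, then clearing real edges via Lemma~\ref{shift-real-to-vertex-lem} --- in place of the single grading-based reduction that sufficed for the Graded Uniqueness Theorem.
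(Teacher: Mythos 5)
Your proposal is correct and follows essentially the same two-stage reduction as the paper's own proof: first apply Lemma~\ref{shift-to-real-lem} to obtain $\gamma$ with $x\gamma$ a nonzero polynomial in real edges, then apply Lemma~\ref{shift-real-to-vertex-lem} to collapse $\alpha^* x \gamma \beta$ to $rv$ and contradict the hypothesis $\phi(rv) \neq 0$. Your surrounding commentary on why the absence of a grading forces this two-lemma maneuver is accurate but not needed for the argument itself.
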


\begin{proof}
Suppose $x \in \ker \phi$ and $x \neq 0$.  By Lemma~\ref{shift-to-real-lem} there exists $\gamma \in E^*$ such that $x \gamma$ is a nonzero polynomial in all real edges.  Consequently, Lemma~\ref{shift-real-to-vertex-lem} implies that there exists $\alpha, \beta \in E^*$ such that $\alpha^* x\gamma \beta = rv$ for some $v \in E^0$ and some $r \in R \setminus \{ 0 \}$.  Then $\phi(rv) = \phi(\alpha^*) \phi(x) \phi(\gamma \beta) = 0$, which is a contradiction.  Hence $\ker \phi = \{ 0 \}$, and $\phi$ is injective.
\end{proof}

\begin{corollary}
Let $E$ be a graph satisfying Condition~(L), and let $K$ be a field.  If $S$ is a ring and $\phi : L_K(E) \to S$ is a ring homomorphism with the property that  $\phi(v) \neq 0$ for all $v \in E^0$, then $\phi$ is injective.
\end{corollary}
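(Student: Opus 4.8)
The plan is to deduce this immediately from the Cuntz--Krieger Uniqueness Theorem (Theorem~\ref{CKUT-thm}) applied in the special case $R = K$. The only discrepancy between the two statements lies in the hypothesis: Theorem~\ref{CKUT-thm} requires $\phi(rv) \neq 0$ for every $v \in E^0$ and every $r \in K \setminus \{0\}$, whereas here we are given only the weaker condition $\phi(v) \neq 0$ for all $v \in E^0$. So the entire task reduces to upgrading the weaker hypothesis to the stronger one; once that is done, the theorem delivers injectivity with no further work.

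To bridge this gap I would exploit the one feature that distinguishes a field from a general commutative ring, namely that every nonzero scalar is invertible. Fix $v \in E^0$ and $r \in K \setminus \{0\}$. Since $v$ is an idempotent in $L_K(E)$ and scalars are central, a direct computation in $L_K(E)$ gives $(rv)(r^{-1}v) = r r^{-1} v^2 = v$. Applying the ring homomorphism $\phi$ and using that it preserves products yields $\phi(rv)\,\phi(r^{-1}v) = \phi(v)$. Because $\phi(v) \neq 0$ by hypothesis, the product $\phi(rv)\,\phi(r^{-1}v)$ is nonzero, and hence $\phi(rv) \neq 0$. Note that this step uses only that $\phi$ is a ring homomorphism, consistent with the hypothesis of both the corollary and Theorem~\ref{CKUT-thm}.

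With $\phi(rv) \neq 0$ established for all $v \in E^0$ and all $r \in K \setminus \{0\}$, every hypothesis of Theorem~\ref{CKUT-thm} is satisfied, and the injectivity of $\phi$ follows at once. I expect no genuine obstacle here: this is a routine specialization, and the only point worth emphasizing is that the invertibility of nonzero scalars is precisely what fails over a general ring $R$. This is why the over-a-field corollary can afford to drop the scalar $r$ from its hypothesis, while the over-a-ring theorem must retain it.
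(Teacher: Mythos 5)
Your proposal is correct and is essentially the argument the paper intends: the corollary is stated as an immediate consequence of Theorem~\ref{CKUT-thm}, and the only content is exactly the reduction you supply, namely that over a field the identity $(rv)(r^{-1}v)=v$ in $L_K(E)$ forces $\phi(rv)\neq 0$ whenever $\phi(v)\neq 0$, even for a mere ring homomorphism $\phi$. Your closing observation that this is precisely what fails over a general commutative ring matches the paper's own explanation for the stronger hypothesis in the ring-coefficient version.
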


\begin{remark} \label{CKUT-better-rem}
The Cuntz-Krieger Uniqueness Theorem has a long history in the $C^*$-algebra setting, and the Cuntz-Krieger Uniqueness Theorem for graph $C^*$-algebras may be viewed as a vast generalization of Coburn's Theorem \cite[Theorem~3.5.18]{Mur}.  

The first Cuntz-Krieger Uniqueness Theorem was proven by Cuntz and Krieger in \cite[Theorem~2.13]{CK}, where they showed that if $A$ is a finite $\{0, 1\}$-matrix satisfying Condition~(L) then any two Cuntz-Krieger $A$-families composed of nonzero partial isometries generate isomorphic $C^*$-algebras.  This was generalized to $C^*$-algebras of locally finite graphs in \cite[Theorem~3.7]{KPR} using groupoid techniques, and \cite{KPR} is also where Condition~(L) was first introduced.  In \cite[Theorem~3.1]{BPRS} a Cuntz-Krieger Theorem was proven for $C^*$-algebras of row-finite graphs, and the proof avoided groupoid methods in favor of a direct analysis of the AF-core.  A Cuntz-Krieger Uniqueness Theorem for $C^*$-algebras of non-row-finite graphs was obtained in \cite[Theorem~1.5]{RS} by realizing the graph $C^*$-algebra as an increasing union of $C^*$-algebras of finite graphs.  When Cuntz and Krieger's original theorem \cite[Theorem~2.13]{CK} is translated into a theorem about graphs, one obtains the Cuntz-Krieger Uniqueness Theorem for $C^*$-algebras of finite graphs with no sinks and, moreover, Condition~(I) is equivalent to Conidition~(L) for these graphs.  Additionally, the Cuntz-Krieger Uniqueness Theorem was extended to Cuntz-Krieger algebras of infinite matrices \cite[Theorem~13.1]{EL}.

Although there is a Cuntz-Krieger Uniqueness Theorem for Leavitt path algebras, this result is independent of the graph $C^*$-algebra result --- neither the Cuntz-Krieger Uniqueness Theorem for Leavitt path algebras nor the Cuntz-Krieger Uniqueness Theorem for graph $C^*$-algebras may be used to obtain the other.  In \cite[Corollary~3.3]{AbrPino}, Abrams and Aranda-Pino first proved a weak version of the Cuntz-Krieger Uniqueness Theorem for $L_K(E)$, where $K$ is a field and $E$ is a row-finite graph.  Later, the author proved a lemma (see \cite[Lemma~6.5]{Tom10}) that, with \cite[Corollary~3.3]{AbrPino}, gives a full Cuntz-Krieger Uniqueness Theorem for $L_K(E)$ when $E$ is a row-finite graph.  A proof of the Cuntz-Krieger Uniqueness Theorem for $L_K(E)$, where $K$ is a field and $E$ is an arbitrary graph, was given by the author in \cite[Theorem~6.8]{Tom10}.  The proof in \cite{Tom10} uses the process of desingularization \cite[Lemma~6.7]{Tom10} to show that the Cuntz-Krieger Uniqueness Theorem in the row-finite case implies the Cuntz-Krieger Uniqueness Theorem for arbitrary graphs. The proof in Theorem~\ref{CKUT-thm} uses different techniques than \cite{AbrPino} or \cite{Tom10}, and does not require one to consider the row-finite case first. 
\end{remark}

\section{Ideals in Leavitt path algebras} \label{Ideals-sec}

In this section we analyze ideals in $L_R(E)$.  We will see that for ideals of $L_R(E)$ we will not only be concerned with which vertices are in the ideal, but also which multiples of the vertices are in the ideal.  To motivate the results in this section, we start with an example.  

\begin{example}
Let $E$ be the graph with two vertices and no edges, and let $R = \Z$.  Then $L_\Z(E) \cong \Z \oplus \Z$.  If we consider the ideals of $L_\Z(E)$, we see that they are of the form $n \Z \oplus m \Z$ for $n, m \in \{0, 1, 2, \ldots, \infty \}$.  We would like to consider the ideals that are reflected in the structure of the graph --- in particular, those ideals that are generated by vertices of the graph.  However, if we list the vertices of $E$ as $E^0 = \{v, w \}$, then there are four subsets of vertices, $\emptyset, \{ v \}, \{ w \}, \{v,w \}$, and the ideals generated by these sets are $0, \ \Z \oplus 0, \ 0 \oplus \Z, \ \Z \oplus \Z$.  These are the only ideals generated by subsets of vertices, and each of them has the property that if a nonzero multiple of a vertex in in the ideal, then that vertex is in the ideal.  Consequently, it is only these kind of ideals that will be determined by subsets of vertices in the graph.  This motivates the following definition.
\end{example}

\begin{definition}
Let $R$ be a commutative ring with unit, and let $E$ be a graph.  If $I$ is an ideal in $L_R(E)$, we say that $I$ is \emph{basic} if whenever $r \in R \setminus \{ 0 \}$ and $v \in E^0$, we have $rv \in I$ implies $v \in I$.
\end{definition}

\begin{remark}
Observe that if $K$ is a field, then every ideal in $L_K(E)$ is basic.
\end{remark}

In this section we show that saturated hereditary subsets of vertices correspond to graded basic ideals.  Throughout this section we restrict our attention to the case of row-finite graphs in order to  avoid many of the complications that arise in the non-row-finite case.  Our hope is that this will make our investigations easier for the reader to follow.  Despite this, most of the results in this section do generalize to the non-row-finite setting, provided one uses admissible pairs in place of saturated hereditary subsets.

\begin{definition}
Let $E$ be a graph.  A subset $H \subseteq E^0$ is \emph{hereditary} if for all $e \in E^0$ and $s(e) \in H$ imply that $r(e) \in H$.  A hereditary subset $H$ is \emph{saturated} if whenever $v \in E^0_\textnormal{reg}$ then $r(s^{-1}(v)) \subseteq H$ implies that $v \in H$.  For any hereditary set $X$, we define the \emph{saturation} $\overline{X}$ to be the smallest saturated hereditary subset of $E^0$ containing $X$.
\end{definition}

Observe that intersections of saturated hereditary subsets are saturated hereditary.  Also, unions of saturated hereditary subsets are hereditary, but not necessarily saturated.

In any $R$-algebra $A$, the ideals of $A$ are partially ordered by inclusion and form a lattice under the operations $I \wedge J := I \cap J$ and $I \vee J := I + J$.  (Note that $I + J$ is the smallest ideal containing $I \cup J$.)  This lattice has a maximum element $A$ and a minimum element $\{ 0 \}$.

Likewise, for any graph $E=(E^0,E^1,r,s)$, the saturated hereditary subsets of $E^0$ are partially ordered by inclusion and form a lattice under the operations $H_1 \wedge H_2 := H_1 \cap H_2$ and $H_1 \vee H_2 := \overline{H_1 \cup H_2}$.  This lattice has a maximum element $E^0$ and a minimum element $\emptyset$.

\begin{definition} \label{graph-constructions}
Let $E = (E^0, E^1, r, s)$ be a graph and $H \subseteq E^0$ be a saturated hereditary subset.  We define $(E \setminus H)$ to be the graph with $(E \setminus H)^0 := E^0 \setminus H$,  $(E \setminus H)^1 := E^1 \setminus r^{-1}(H)$, and $r_{(E \setminus H)}$ and $s_{(E \setminus H)}$ are obtained by restricting $r$ and $s$ to $(E \setminus H)^1$.  We also define $E_H$ to be the graph with $E_H^0 := H$, $E_H^1 := s^{-1}(H)$, and $r_{E_H}$ and $s_{E_H}$ are obtained by restricting $r$ and $s$ to $E_H^1$. 
\end{definition}

\begin{lemma} \label{H-I-sat-hered}
Let $E$ be a graph, and let $R$ be a commutative ring with unit.  If $I$ is an ideal of $L_R(E)$, then the set $H_I := \{ v : v \in I \}$ is a saturated hereditary subset.
\end{lemma}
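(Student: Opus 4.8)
The plan is to verify the two defining properties of a saturated hereditary set directly from the Leavitt $E$-family relations in Definition~\ref{Leavitt-E-fam-def}, exploiting throughout that $I$ is a (two-sided) ideal. Recall $H_I = \{ v \in E^0 : v \in I \}$, so I must show: (heredity) if $e \in E^1$ with $s(e) \in H_I$, then $r(e) \in H_I$; and (saturation) if $v \in E^0_\reg$ and $r(s^{-1}(v)) \subseteq H_I$, then $v \in H_I$. Both will come down to producing, from the assumed membership of certain vertices in $I$, an explicit element of $I$ equal to the desired vertex, using the multiplication rules and the ideal property.

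For heredity, I would take $e \in E^1$ with $v := s(e) \in I$. Since $I$ is an ideal, the element $e^* v e$ lies in $I$. Using relation~(2), $e^* v = e^* s(e) = e^*$, and then relation~(3) gives $e^* e = r(e)$; hence $e^* v e = r(e) \in I$, so $r(e) \in H_I$. For saturation, I would take $v \in E^0_\reg$ with $r(e) \in I$ for every $e$ with $s(e) = v$. Because $v$ is regular, relation~(4) applies and gives the \emph{finite} sum $v = \sum_{\{e : s(e) = v\}} e e^*$. For each such $e$, relation~(1) yields $e = e\, r(e)$, so $e e^* = e\, r(e)\, e^*$; since $r(e) \in I$ and $I$ is an ideal, each summand $e\,r(e)\,e^* \in I$. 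As the sum is finite, $v \in I$, whence $v \in H_I$.

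There is no substantial obstacle here: both properties reduce to one-line computations in the Leavitt $E$-family relations combined with the fact that $I$ absorbs multiplication on both sides. The only point requiring a moment's care is that saturation is only asserted for regular vertices, which is precisely what guarantees both that relation~(4) is available for $v$ and that the sum $\sum_{\{e : s(e)=v\}} e e^*$ is finite (so membership of each summand in $I$ forces membership of the sum); for singular vertices no such relation holds and the claim is vacuous. I would therefore present the two verifications in turn and conclude that $H_I$ is saturated hereditary.
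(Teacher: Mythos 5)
Your proposal is correct and follows essentially the same argument as the paper: heredity via $r(e) = e^*s(e)e \in I$, and saturation via $ee^* = e\,r(e)\,e^* \in I$ summed over the finitely many edges out of a regular vertex. No issues.
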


\begin{proof}
If $e \in E^1$ and $s(e) \in H$, then $s(e) \in I$ so $r(e) = e^*e = e^*s(e)e \in I$ and $r(e) \in H$.  Thus $H$ is hereditary.  

If $v \in E^0_\textnormal{reg}$ and $r(s^{-1}(v)) \subseteq H$, then for each $e \in s^{-1}(v)$ we have $r(e) \in H$ and $r(e) \in I$ so $ee^* = er(e)e^* \in I$.  Thus $v = \sum_{s(e) = v} ee^* \in I$, and $v \in H$.  Hence $H$ is saturated.
\end{proof}

\begin{proposition} \label{I-H-facts}
Let $E$ be a graph, and let $R$ be a commutative ring with unit.  If $H$ is a saturated hereditary subset of $E^0$, and $I_H$ is the two-sided ideal in $L_R(E)$ generated by $\{ v : v \in H \}$, then $$I_H = \algspan_R \{ \alpha \beta^* : \alpha, \beta \in E^* \text{ and } r(\alpha) = r(\beta) \in H \},$$ $I_H$ is a graded basic ideal, and $\{ v \in E^0 : v \in I_H \} = H$.  Moreover, $I_H$ is a selfadjoint ideal that is also an idempotent ring.  
\end{proposition}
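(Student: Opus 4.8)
The plan is to set $J := \algspan_R\{\alpha\beta^* : \alpha,\beta\in E^*,\ r(\alpha)=r(\beta)\in H\}$ and to prove the four assertions by first establishing that $J = I_H$. To see that $J$ is a two-sided ideal, I would multiply an arbitrary generator $\alpha\beta^*$ of $J$ on either side by an arbitrary generator $\mu\nu^*$ of $L_R(E)$ and run through the four cases of the multiplication formula~\eqref{mult-rules-eq}. In every case the surviving term has the form $\alpha'\beta'^*$, and the key observation is that whenever a new range vertex appears it is the range of a path whose source already lies in $H$; since $H$ is hereditary, that range also lies in $H$, so the term stays in $J$. Because each $v\in H$ equals $vv^*\in J$ we get $I_H\subseteq J$, and conversely each generator satisfies $\alpha\beta^* = \alpha\, r(\alpha)\,\beta^*$ with $r(\alpha)\in H\subseteq I_H$, giving $J\subseteq I_H$; hence $J=I_H$. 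Since the generators $\alpha\beta^*$ are homogeneous of degree $|\alpha|-|\beta|$, grouping any element of $I_H$ by degree shows $I_H=\bigoplus_k(I_H\cap L_R(E)_k)$, so $I_H$ is graded; and since $(\alpha\beta^*)^*=\beta\alpha^*$ again lies in $J$, we get $I_H^*=I_H$, so $I_H$ is selfadjoint.

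For the idempotent claim I would observe that for a generator $\alpha\beta^*$ with $r(\alpha)=r(\beta)=w\in H$, both factors already lie in $I_H$: the relations $\alpha=\alpha\, r(\alpha)$ and $\beta^*=r(\beta)\beta^*$ together with $w\in I_H$ give $\alpha,\beta^*\in I_H$, so $\alpha\beta^*=\alpha\cdot\beta^*\in I_H^2$. As these generators span $I_H$, this yields $I_H=I_H^2$.

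The heart of the argument is the basic property together with the identity $\{v\in E^0:v\in I_H\}=H$, and here I would build a homomorphism that detects $H$. Using the universal property of $L_R(E)$ (Definition~\ref{fun-ring-def}), I would define $\pi\colon L_R(E)\to L_R(E\setminus H)$, where $E\setminus H$ is the quotient graph of Definition~\ref{graph-constructions}, by sending $v\mapsto v$ and $e\mapsto e,\ e^*\mapsto e^*$ when $v\notin H$ and $r(e)\notin H$, and sending the remaining generators to $0$. The task is to check that the images form a Leavitt $E$-family in $L_R(E\setminus H)$, i.e.\ satisfy the relations of Definition~\ref{Leavitt-E-fam-def}. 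Relations (1)--(3) are immediate (note that $r(e)\notin H$ forces $s(e)\notin H$ by hereditariness), and the only delicate verification is relation (4) at a regular vertex $v\notin H$: the surviving edges $\{e:s(e)=v,\ r(e)\notin H\}$ are exactly the edges out of $v$ in $E\setminus H$, and \emph{saturation} guarantees this set is nonempty, so $v$ remains a regular vertex of $E\setminus H$ and the Cuntz--Krieger relation survives. Granting $\pi$, it follows that $\pi(v)=0$ for $v\in H$, hence $I_H\subseteq\ker\pi$; and if $v\notin H$ then $\pi(v)=v\neq0$ and $\pi(rv)=rv\neq0$ for $r\in R\setminus\{0\}$ by Proposition~\ref{LPA-exists} applied to $L_R(E\setminus H)$. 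Thus $v\in I_H$ forces $v\in H$, which proves $\{v:v\in I_H\}=H$; and $rv\in I_H$ forces $\pi(rv)=0$, so $v\in H\subseteq I_H$, which proves that $I_H$ is basic.

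I expect the main obstacle to be the verification that $\pi$ is well defined --- specifically the check that relation (4) of the Leavitt $E$-family is preserved --- since this is the single place where the \emph{saturation} hypothesis on $H$ is essential: it prevents a regular vertex outside $H$ from becoming a sink in $E\setminus H$, which would otherwise destroy the Cuntz--Krieger relation and make $\pi$ inconsistent. The ideal computation for $J$ is routine but tedious, and I would organize it by the four cases of~\eqref{mult-rules-eq} rather than writing each one out.
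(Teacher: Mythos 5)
Your proposal is correct and follows essentially the same route as the paper: the same span description of $I_H$ via the multiplication rules, the same homogeneity and adjoint observations for gradedness and selfadjointness, the same factorization $\alpha\beta^* = (\alpha r(\alpha))(r(\beta)\beta^*)$ for idempotence, and the same homomorphism $\pi\colon L_R(E)\to L_R(E\setminus H)$ killing $H$ to establish the basic property and $\{v : v\in I_H\}=H$. Your explicit check that saturation is what keeps relation (4) consistent at regular vertices outside $H$ is exactly the point the paper leaves implicit when it asserts the generators of $L_R(E\setminus H)$ extend to a Leavitt $E$-family.
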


\begin{proof}
We first observe that the multiplication rules of \eqref{mult-rules-eq} imply that $\algspan_R \{ \alpha \beta^* :  \alpha, \beta \in E^* \text{ and } r(\alpha) = r(\beta) \in H \}$ is a two-sided ideal containing $H$.  It follows that $I_H \subseteq \algspan_R \{ \alpha \beta^* :  \alpha, \beta \in E^* \text{ and } r(\alpha) = r(\beta) \in H \}$.  Furthermore, if $v \in H$, then for any $\alpha, \beta \in E^*$ with $r(\alpha)=r(\beta)=v$, the element $\alpha v \beta^* = \alpha \beta^*$ is in any ideal containing $v$.  Hence $I_H = \algspan_R \{ \alpha \beta^* : \alpha, \beta \in E^* \text{ and } r(\alpha) = r(\beta) \in H \}$.

To see that $I_H$ is graded it suffices to notice that $\alpha \beta^*$ is homogeneous of degree $|\alpha|-|\beta|$.  In addition, we see $I_H$ is selfadjoint because $(\alpha\beta^*) = \beta \alpha^*$.  Next we show that $I_H$ is a basic ideal.  Let $v \in E^0$ and suppose that $rv \in I_H$ for some $r \in R \setminus \{ 0 \}$.  Let $E \setminus H$ be the graph of Definition~\ref{graph-constructions}.  Then the vertices, edges, and ghost edges of $E \setminus H$, which generate $L_R(E \setminus H)$, may be extended to a Leavitt $E$-family by simply defining elements to be zero if $v \in H$ or $r(e) \in H$.  By the universal property of $L_R(E)$, we obtain an $R$-algebra homomorphism $\phi : L_R(E) \to L_R (E \setminus H)$ with $$\phi (v) = \begin{cases} v & \text{if $v \in E^0 \setminus H$} \\ 0 & \text{if $v \in H$} \end{cases} \qquad \phi (e) = \begin{cases} e & \text{if $r(e) \in E^0 \setminus H$} \\ 0 & \text{if $r(e) \in H$} \end{cases}$$ and $$\phi (e^*) = \begin{cases} e^* & \text{if $r(e) \in E^0 \setminus H$} \\ 0 & \text{if $r(e) \in H$.} \end{cases}$$
Thus $\ker \phi$ is a two-sided ideal of $L_R(E)$ containing $H$, and it follows that $I_H \subseteq \ker \phi$.  Hence $r \phi(v) = \phi (rv) = 0$, and since $v$ is a vertex in $E^0$, either $\phi(v) = v$ or $\phi( v ) = 0$.  But Proposition~\ref{LPA-exists} implies that in $L_R(E \setminus H)$ we have $rv \neq 0$ for all $v \in (E \setminus H)^0$ and all $r \in R \setminus \{ 0 \}$.  Thus $\phi( v ) = 0$ and $v \in H$. Hence $v \in I_H$, and $I_H$ is a basic ideal.

We next show that the set $\{ v \in E^0 : v \in I_H \}$ is precisely $H$.  To begin, we trivially have $H \subseteq \{ v \in E^0 : v \in I_H \}$.  For the reverse inclusion we use the fact that $I_H \subseteq \ker \phi$ to conclude that $v \notin H$ implies that $\phi(v) \neq 0$ so that $v \notin \ker \phi$ and $v \notin I_H$.  Hence $\{ v \in E^0 : v \in I_H \} = H$.

Finally we show that $I_H$ is an idempotent ring.  Any $x \in I_H$ has the form $x = \sum_{i=1}^N r_i \alpha_i \beta_i^*$ with $r(\alpha_i) = r(\beta_i) \in H$.  For each $i$, define $v_i := r(\alpha_i) = r(\beta_i)$.  Then $r_i\alpha_i \beta_i^* = (r_i \alpha_i v_i) (v_i \beta_i^*)$, and since $r_i \alpha_i v_i \in I_H$ and $v_i \beta_i^* \in I_H$, we see that any $x \in I_H$ may be written as $x = a_1b_1 + \ldots + a_Nb_N$ for $a_1, \ldots, a_N, b_1, \ldots, b_N \in I_H$.  Thus $I_H$ is an idempotent ring.
\end{proof}

\begin{lemma} \label{hered-sat-ideal}
Let $E$ be a graph, and let $R$ be a commutative ring with unit.  If $X$ is a hereditary subset of $E^0$, and $I_X$ is the two-sided ideal in $L_R(E)$ generated by $\{ v : v \in X \}$, then $$I_X = I_{\overline{X}}.$$  In particular, $I_X$ is a graded basic ideal that is also an idempotent ring.  
\end{lemma}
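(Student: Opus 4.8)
The plan is to prove the two inclusions $I_X \subseteq I_{\overline{X}}$ and $I_{\overline{X}} \subseteq I_X$ separately, and then read off the ``in particular'' clause directly from Proposition~\ref{I-H-facts}. The inclusion $I_X \subseteq I_{\overline{X}}$ is immediate: since $X \subseteq \overline{X}$, every generator $v$ of $I_X$ lies in $\overline{X}$ and hence in $I_{\overline{X}}$, so the ideal $I_X$ generated by $\{ v : v \in X \}$ is contained in $I_{\overline{X}}$.

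The substance is the reverse inclusion, and the key idea is to apply Lemma~\ref{H-I-sat-hered} to the ideal $I := I_X$ itself. That lemma tells us that the vertex set $H_{I_X} := \{ v \in E^0 : v \in I_X \}$ is a \emph{saturated hereditary} subset of $E^0$. Now I would observe that $X \subseteq H_{I_X}$, since each $v \in X$ is by construction a generator of $I_X$ and therefore lies in $I_X$. Because $\overline{X}$ is by definition the smallest saturated hereditary subset containing $X$, and $H_{I_X}$ is a saturated hereditary subset containing $X$, we conclude $\overline{X} \subseteq H_{I_X}$. In words, every vertex of $\overline{X}$ already lies in $I_X$.

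From this it follows that $I_{\overline{X}} \subseteq I_X$: the ideal $I_{\overline{X}}$ is generated by $\{ v : v \in \overline{X} \}$, and we have just shown each such generator is an element of $I_X$, so $I_{\overline{X}}$, being the smallest ideal containing those generators, is contained in $I_X$. Combining the two inclusions gives $I_X = I_{\overline{X}}$. Finally, since $\overline{X}$ is a saturated hereditary subset, Proposition~\ref{I-H-facts} applied to $H = \overline{X}$ shows that $I_{\overline{X}}$ is a graded basic ideal that is also an idempotent ring, and the equality $I_X = I_{\overline{X}}$ transfers all of these properties to $I_X$.

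The proof has no real obstacle: the only point requiring care is recognizing that Lemma~\ref{H-I-sat-hered} is meant to be applied to the ideal $I_X$ generated by a merely \emph{hereditary} set, so that the saturation appears automatically as the vertex set of the resulting ideal rather than having to be built by hand. The minimality of $\overline{X}$ then does all the work, and no computation with the spanning description of $I_X$ is needed.
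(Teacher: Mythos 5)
Your proof is correct and follows essentially the same route as the paper: both establish $I_X \subseteq I_{\overline{X}}$ from $X \subseteq \overline{X}$, and both obtain the reverse inclusion by applying Lemma~\ref{H-I-sat-hered} to $I_X$ to see that $\{v \in E^0 : v \in I_X\}$ is a saturated hereditary set containing $X$, hence containing $\overline{X}$. The final appeal to Proposition~\ref{I-H-facts} for the ``in particular'' clause matches the paper's (implicit) reasoning as well.
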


\begin{proof}
Since $X \subseteq \overline{X}$, we have $I_X \subseteq I_{\overline{X}}$.  Conversely, if we let $H := \{ v \in E^0 : v \in I_X \}$, then it follows from Lemma~\ref{H-I-sat-hered} that $H$ is a saturated hereditary subset containing $X$.  Thus $\overline{X} \subseteq H$, and $v \in \overline{X}$ implies $v \in I_X$.  Hence $I_{\overline{X}} \subseteq I_X$.
\end{proof}

\begin{theorem} \label{graded-ideals-structure}
Let $E$ be a graph, and let $R$ be a commutative ring with unit. Using the notation of Definition~\ref{graph-constructions} and Proposition~\ref{I-H-facts}, we have the following:
\begin{itemize}
\item[(1)]  The map $H \mapsto I_H$ is a lattice isomorphism from the lattice of saturated hereditary subsets of $E^0$ onto the lattice of graded basic ideals of $L_R(E)$.  In particular, the graded basic ideals of $L_R(E)$ form a lattice with $$I_{H_1} \wedge I_{H_2} = I_{H_1 \cap H_2} \qquad \text{ and } \qquad I_{H_1} \vee I_{H_2} = I_{\overline{H_1 \cup H_2}}$$ for any saturated hereditary subsets $H_1$ and $H_2$.
\item[(2)] For any saturated hereditary subset $H$ we have that $L_R(E) / I_H$ is canonically isomorphic to $L_R(E\setminus H)$.
\item[(3)]  For any hereditary subset $X$ the ideal $I_X$ and the Leavitt path algebra $L_R (E_X)$ are Morita equivalent as rings.
\end{itemize}
\end{theorem}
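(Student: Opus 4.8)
The plan is to handle the three parts in the order (2), (1), (3), using the Graded Uniqueness Theorem (Theorem~\ref{GUT}) as the common engine. The unifying observation is this: if $H$ is a saturated hereditary subset and $J$ is \emph{any} ideal whose vertex set $\{v \in E^0 : v \in J\}$ equals $H$ (for instance $J = I_H$, or an arbitrary graded basic ideal $I$), then in the quotient $L_R(E)/J$ the images of the generators $\{v, e, e^* : v \notin H,\ r(e) \notin H\}$ form a Leavitt $(E \setminus H)$-family. The only relation needing care is the Cuntz-Krieger relation at a regular vertex $v \notin H$: edges $e$ with $r(e) \in H$ satisfy $ee^* = e\,r(e)\,e^* \in J$ and drop out modulo $J$, while saturation guarantees that not every edge out of $v$ disappears, so the surviving sum is exactly the relation defining $v$ in $E \setminus H$. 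By the universal property this produces a graded homomorphism $\rho_J : L_R(E \setminus H) \to L_R(E)/J$ that fixes generators.

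To prove (2), I would apply this with $J = I_H$. The map $\rho_{I_H}$ is graded and surjective, since it hits every nonzero generator of the quotient, and it is injective by Theorem~\ref{GUT}: because $I_H$ is basic with vertex set $H$ (Proposition~\ref{I-H-facts}), we have $rw \notin I_H$ for every $w \in E^0 \setminus H$ and $r \in R \setminus \{0\}$, so $\rho_{I_H}(rw) \neq 0$. Thus $\rho_{I_H}$ is the canonical isomorphism $L_R(E \setminus H) \cong L_R(E)/I_H$; its inverse is induced by the homomorphism $\phi$ of Proposition~\ref{I-H-facts}, whence $\ker \phi = I_H$. For part (1), injectivity of $H \mapsto I_H$ is immediate from $\{v : v \in I_H\} = H$, and order-preservation in both directions is formal, so the map is a lattice isomorphism and the displayed formulas for $\wedge$ and $\vee$ follow from $H_1 \wedge H_2 = H_1 \cap H_2$ and $H_1 \vee H_2 = \overline{H_1 \cup H_2}$ together with Lemma~\ref{hered-sat-ideal}. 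The real content is surjectivity: given a graded basic ideal $I$, set $H := \{v : v \in I\}$, which is saturated hereditary by Lemma~\ref{H-I-sat-hered}. Running the construction above with $J = I$ gives $\rho_I$, again graded and surjective, and injective by Theorem~\ref{GUT} (here $rw \notin I$ for $w \notin H$ because $I$ is basic). So $\rho_I$ is an isomorphism, and since both $\rho_I^{-1} \circ \pi_I$ and $\phi$ fix generators they coincide, giving $I = \ker \phi = I_H$.

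For part (3) I would realize $L_R(E_X)$ inside $L_R(E)$ and build an explicit Morita context with $I_X$. Since $X$ is hereditary, $\{v, e, e^* : v \in X,\ s(e) \in X\}$ is a Leavitt $E_X$-family in $L_R(E)$ (a vertex of $X$ is regular in $E_X$ exactly when it is regular in $E$), and the induced graded homomorphism $L_R(E_X) \to L_R(E)$ is injective by Theorem~\ref{GUT}; let $B$ denote its image, so that $B \cong L_R(E_X)$ and $B \subseteq I_X$ generates $I_X$ as an ideal. Writing $I_X = I_{\overline{X}} = \algspan_R\{\alpha\beta^* : r(\alpha) = r(\beta) \in \overline{X}\}$ (Lemma~\ref{hered-sat-ideal} and Proposition~\ref{I-H-facts}), I set
$$M := \algspan_R\{\alpha\beta^* : r(\alpha)=r(\beta)\in\overline{X},\ s(\alpha)\in X\}, \qquad N := \algspan_R\{\alpha\beta^* : r(\alpha)=r(\beta)\in\overline{X},\ s(\beta)\in X\},$$
which are unital $B$--$I_X$ and $I_X$--$B$ bimodules, and take $\psi : M \otimes_{I_X} N \to B$ and $\phi : N \otimes_B M \to I_X$ to be multiplication. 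The Morita identities reduce to associativity, and $\psi$ is surjective because each generator $\gamma\delta^*$ of $B$ equals $\psi(\gamma \otimes \delta^*)$ with $\gamma \in M$ and $\delta^* \in N$. Both $B$ and $I_X$ are idempotent rings, so Proposition~\ref{ideal-corresp-Mor-eq} applies and yields that $L_R(E_X)$ and $I_X$ are Morita equivalent.

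The step I expect to be the \emph{main obstacle} is the surjectivity of $\phi : N \otimes_B M \to I_X$, equivalently $NM = I_X$, since this requires reaching the vertices of $\overline{X} \setminus X$, which are not directly products of elements of $M$ and $N$. I would handle this by induction along the construction of the saturation: a vertex $v$ adjoined at a saturation step is regular with $r(s^{-1}(v)) \subseteq \overline{X}$, so $v = \sum_{s(e)=v} e\,r(e)\,e^*$, and substituting the inductively obtained expressions for the $r(e) \in \overline{X}$ while absorbing each edge $e$ into the $N$- and $M$-factors exhibits $v \in NM$. Since $NM$ is an $I_X$-subbimodule containing every $v \in \overline{X}$, it must be all of $I_X$. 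The corresponding surjectivity of $\psi$ is easier, as its targets already have source vertices in $X$.
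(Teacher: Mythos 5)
Your proof is correct and follows essentially the same route as the paper: parts (2) and (1) are handled by applying the Graded Uniqueness Theorem to the induced graded surjection $L_R(E\setminus H)\to L_R(E)/J$, and part (3) uses the same Morita context, since your $M$ and $N$ coincide with the paper's $\sum_n L_R(E)e_n$ and $\sum_n e_n L_R(E)$ where $e_n=\sum_{i=1}^n v_i$. The only divergence is that your induction along the saturation to prove $NM=I_X$ is unnecessary: because each $v\in X$ is idempotent, $I_X=\sum_{v\in X}L_R(E)\,v\,L_R(E)=\sum_{v\in X}\bigl(L_R(E)v\bigr)\bigl(vL_R(E)\bigr)\subseteq NM$ directly, which is how the paper's corner formulation makes this step immediate.
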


\begin{proof}
We shall first prove (2), then (1), and then (3).

\smallskip

\noindent \underline{\textsc{Proof of (2):}}  We shall show that $L_R(E) / I_H \cong L_R(E \setminus H)$.  Let $\{v : v \in E^0 \} \cup \{e, e^* : \in E^1\}$ be the generators for $L_R(E)$.  Then $\{ v + I_H : v \in E \setminus H \} \cup e+I_H, e^*+I_H : r(e) \notin H\}$ is a collection of elements satisfying the Leavitt path algebra relations for $E_H$ and generating $L_R(E)/I_H$.  Hence there exists a surjective $R$-algebra homomorphism $\phi : L_R(E_H) \to L_R(E)/I_H$.  Proposition~\ref{I-H-facts} shows that $I_H$ is a graded ideal, and hence $\phi$ is a  graded homomorphism.  Furthermore, if $v \in E^0_H$, then $v \notin H$ and the previous paragraph implies that $v \notin I_H$.  Since Proposition~\ref{I-H-facts} shows that $I_H$ is a basic ideal, for all $v \in E_H^0$ and all $r \in R \setminus \{ 0 \}$, we have $\phi (rv) = rv + I_H \neq 0$.  It follows from the Graded Uniqueness Theorem~\ref{GUT} that $\phi$ is injective.  Thus $\phi$ is an isomorphism and $L_R(E) / I_H \cong L_R(E \setminus H)$.

\smallskip

\noindent \underline{\textsc{Proof of (1):}}  We shall show that $H \mapsto I_H$ is a lattice
isomorphism.  To see that this map is surjective, let $I$ be a
graded basic ideal in $L_R(E)$, and set $H := \{v \in E^0
: v \in I\}$.  Since $I_H \subseteq I$, we see that $I_H$ and $I$
contain the same $v$'s.  Therefore, just as
in the proof of Part (2), we see that $L_R(E) /
I_H$ and $L_R(E) / I$ are generated by nonzero elements satisfying the Leavitt path algebra relations for 
$E \setminus H$.  Since both $I_H$ and $I$ are graded, both
quotients are graded, and the quotient map $\pi : L_R(E) / I_H \rightarrow L_R(E) / I$ is a graded homomorphism.  Furthermore, since $I$ and $I_H$ contain the same $v$'s, and since $I$ is a basic ideal, it follows that if $v \in E^0 \setminus H$, then $v \notin I_H$ and $rv \notin I$ for all $r \in R \setminus \{0 \}$.  Thus the Graded Uniqueness Theorem implies that the quotient map $\pi : L_R(E \setminus H) \cong
L_R(E) / I_H \rightarrow L_R(E) / I$ is injective.  Hence $I = I_H$.  

The fact that $H \mapsto I_H$ is injective
follows immediately from the fact that $\{ v \in E^0 : v \in I_H \}$ is precisely $H$, which was obtained in Proposition~\ref{I-H-facts}.  Thus the correspondence $H \mapsto I_H$ is bijective.  Since $H \mapsto I_H$ is a bijection that preserves inclusions, the map $H \mapsto I_H$ is a poset isomorphism and hence automatically a lattice isomorphism

\smallskip 

\noindent \underline{\textsc{Proof of (3):}}  

To see that $I_X$ is Morita equivalent to $L_R(E_X)$, list the elements of $X = \{ v_ 1, v_2, \ldots \}$, let $$\Lambda := \begin{cases} \{1, 2, \ldots, |X| \} & \text{ if $X$ is finite} \\ \{ 1, 2, \ldots \} & \text{ if $X$ is infinite,} \end{cases}$$
and let $e_n := \sum_{i=1}^n v_i$ for $n \in \Lambda$.  

If we consider the elements $\{ v : v \in H \}$ and $\{ e, e^* : e \in E^1 \text{ and } s(e) \in H\}$ in $L_R(E)$, we see that they are a Leavitt $E_X$-family and thus there exists a homomorphism $\pi : L_R (E_X) \to L_R(E)$ taking the generators of $L_R(E_X)$ to these elements.  Since this homomorphism is graded, Theorem~\ref{GUT} shows that $\pi$ is injective.  Hence we may identify $L_R(E_X)$ with the subalgebra $$\algspan_R \{ \alpha \beta^* : \alpha, \beta \in E_X^* \text{ and } r(\alpha)=r(\beta) \in X  \}$$ of $L_R(E)$.  With this identification, we see that $L_R( E_X) = \sum_{n=1}^\infty e_n L_R(E) e_n$.  Moreover, Lemma~\ref{I-H-facts} shows that $I_X = \sum_{n=1}^\infty L_R(E) e_n L_R(E)$. 

In addition,
$$\left(\sum_{n \in \Lambda} e_n L_R(E) e_n, \sum_{n \in \Lambda} L_R(E) e_n L_R(E), \sum_{n \in \Lambda} L_R(E)e_n, \sum_{n \in \Lambda} e_n L_R(E), \psi, \phi \right)$$ with $\psi(m \otimes n) = mn$ and $\phi(n \otimes m) =nm$ is a (surjective) Morita context for the idempotent rings $L_R(E_X)$ and $I_X$.  It then follows from \cite[Proposition~2.5]{GS} and \cite[Proposition~2.7]{GS} that $L_R(E_X)$ and $I_X$ are Morita equivalent.

\end{proof}

\begin{corollary}
Let $E$ be a graph, and let $R$ be a commutative ring with unit. Then every graded basic ideal of $L_R(E)$ is selfadjoint.
\end{corollary}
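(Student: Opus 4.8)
The plan is to derive this immediately from the two main structural results already in hand. By Theorem~\ref{graded-ideals-structure}(1), the correspondence $H \mapsto I_H$ is a lattice isomorphism from the saturated hereditary subsets of $E^0$ onto the graded basic ideals of $L_R(E)$; in particular it is surjective. Thus I would begin by taking an arbitrary graded basic ideal $I$ and setting $H := \{ v \in E^0 : v \in I \}$, which is saturated hereditary by Lemma~\ref{H-I-sat-hered}. The surjectivity half of Theorem~\ref{graded-ideals-structure}(1) then gives $I = I_H$, reducing the whole corollary to the single assertion that every ideal of the form $I_H$ is selfadjoint.

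That assertion is exactly what Proposition~\ref{I-H-facts} records, so the second step is just to invoke it. If one wishes to make the argument self-contained, one recalls the explicit description $I_H = \algspan_R \{ \alpha \beta^* : \alpha, \beta \in E^*, \ r(\alpha) = r(\beta) \in H \}$ from that proposition and observes that the $R$-linear involution sends the spanning element $\alpha \beta^*$ to $\beta \alpha^*$, which is again of the admissible form since $r(\beta) = r(\alpha) \in H$. By $R$-linearity the involution therefore maps $I_H$ into $I_H$, and being involutive it maps $I_H$ onto $I_H$, so that $I_H^* = I_H$. Combining the two steps yields $I^* = I$ for every graded basic ideal $I$.

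There is essentially no obstacle here: the real content was already dispatched in establishing the surjectivity of $H \mapsto I_H$ (every graded basic ideal is realized as some $I_H$) and in the spanning-set formula for $I_H$, which makes invariance under the involution transparent. The only point worth care is to confirm that one is entitled to write $I = I_H$ for the specific $H$ built from $I$, which is precisely the surjectivity statement proved via the Graded Uniqueness Theorem in Theorem~\ref{graded-ideals-structure}(1).
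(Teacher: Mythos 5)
Your argument is correct and is precisely the intended derivation: Theorem~\ref{graded-ideals-structure}(1) shows every graded basic ideal is of the form $I_H$, and Proposition~\ref{I-H-facts} (via the spanning set $\algspan_R\{\alpha\beta^* : r(\alpha)=r(\beta)\in H\}$, which the involution visibly preserves) shows each $I_H$ is selfadjoint. The paper leaves this corollary without a written proof, and your two-step argument is exactly the one it implicitly relies on.
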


Using the Cuntz-Krieger Uniqueness Theorem we can characterize those graphs whose associated Leavitt path algebras have the property that every basic ideal is a graded ideal.

\begin{definition}
We say that a closed path $\alpha = e_1 \ldots e_n \in E^n$ is \emph{simple} if $s(e_i) \neq s(e_1)$ for $i = 2, 3, \ldots, n$.
\end{definition}

\begin{definition}
A graph $E$ satisfies \emph{Condition~(K)} if every vertex in $E^0$ is either the base of no closed path or the base of at least two simple closed paths.
\end{definition}

The following proposition is well known.  It has been proven in \cite[Proposition~1.17]{Tom9} and \cite[Theorem~4.5(2),(3)]{PinParMol}.

\begin{proposition} \label{K-implies-quotient-L}
If $E$ is a row-finite graph, then $E$ satisfies Condition~(K) if and only if for every saturated hereditary subset $H$, the graph $E \setminus H$ of Definition~\ref{graph-constructions} satisfies Condition~(L).
\end{proposition}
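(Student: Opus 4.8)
The plan is to prove the two implications separately, each by contraposition, after first recording the elementary observation that a vertex which is the base of a closed path is automatically the base of at least one \emph{simple} closed path: cut the closed path off at its first return to the base vertex. Consequently, $E$ fails Condition~(K) exactly when some vertex $v$ is the base of a closed path and of \emph{exactly one} simple closed path, and this is the reformulation I would work with throughout.

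\textbf{Forward direction} (Condition~(K) implies every $E\setminus H$ satisfies Condition~(L)). I would prove the contrapositive: if some $E \setminus H$ has a cycle $\alpha = e_1 \cdots e_n$ with no exit, then $E$ fails Condition~(K). Set $v := s(\alpha)$. The absence of an exit means that for each vertex $w$ occurring as a source on $\alpha$, the edge of $\alpha$ leaving $w$ is the \emph{only} edge of $E \setminus H$ with source $w$; thus every vertex of $\alpha$ has out-degree $1$ in $E \setminus H$, and following these forced successors from $v$ traces out a single simple cycle $\alpha_0$ based at $v$. Now let $\beta$ be any simple closed path based at $v$ in $E$. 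Since $H$ is hereditary and $v \notin H$, no edge of $\beta$ can have range in $H$ (otherwise $v$, which is reached from that range along $\beta$, would itself lie in $H$); hence $\beta$ is a path in $E \setminus H$, is therefore forced to follow the unique successors, and so $\beta = \alpha_0$. Thus $v$ is the base of exactly one simple closed path, and $E$ fails Condition~(K).

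\textbf{Reverse direction} (every $E \setminus H$ satisfying Condition~(L) implies Condition~(K)). Again I would argue the contrapositive: assume $v$ is the base of exactly one simple closed path $\mu = f_1 \cdots f_m$, and produce a saturated hereditary $H$ for which $E \setminus H$ contains a cycle with no exit. Let $T := \{ w \in E^0 : \text{there is a path from } w \text{ to } v \}$ and set $H := E^0 \setminus T$. A short check shows $H$ is hereditary (if $s(e)$ cannot reach $v$, then neither can $r(e)$) and saturated (a regular $w \in T$ has an outgoing edge whose range again lies in $T$, so the saturation hypothesis never applies to vertices of $T$). Every vertex of $\mu$ lies in $T$ and every edge of $\mu$ has its range in $T$, so $\mu$ is a cycle in $E \setminus H$. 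Finally, if $g$ were an exit for $\mu$ in $E \setminus H$, say $s(g) = s(f_i)$ with $g \neq f_i$, then $r(g) \in T$, so there is a path $\rho$ from $r(g)$ back to $v$, and the closed path $f_1 \cdots f_{i-1}\, g\, \rho$ based at $v$, truncated at its first return to $v$, is a simple closed path agreeing with $\mu$ on $f_1, \ldots, f_{i-1}$ but carrying $g \neq f_i$ in the $i$-th slot. This second simple closed path at $v$ contradicts uniqueness, so $\mu$ has no exit and $E \setminus H$ fails Condition~(L).

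The main obstacle is the bookkeeping in the reverse direction: verifying that $H = E^0 \setminus T$ is genuinely saturated hereditary (being careful that the saturation condition only constrains \emph{regular} vertices, so sinks cause no trouble), and, above all, extracting from an alleged exit a simple closed path that is provably distinct from $\mu$. The first-return truncation must be argued to leave the shared prefix $f_1 \cdots f_{i-1}$ intact while installing $g$ in place of $f_i$, which is exactly what forces the two simple closed paths to differ. Row-finiteness, the standing hypothesis of this section, enters only to keep the notion of regular vertex and the saturation condition clean; the combinatorial heart of the argument does not otherwise rely on it.
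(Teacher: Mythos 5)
Your proof is correct. Note, however, that the paper does not actually prove Proposition~\ref{K-implies-quotient-L}: it declares the result ``well known'' and cites \cite[Proposition~1.17]{Tom9} and \cite[Theorem~4.5(2),(3)]{PinParMol}, so there is no in-paper argument to compare against. Your write-up supplies a complete self-contained proof, and the two directions are handled by exactly the standard devices one would expect: in the forward direction, the absence of an exit for a cycle in $E\setminus H$ forces out-degree one at every vertex of the cycle in $E\setminus H$, and heredity of $H$ forces every simple closed path in $E$ based at $v$ to live entirely in $E\setminus H$, hence to coincide with the unique forced cycle; in the reverse direction, $H := E^0\setminus\{w : w \text{ reaches } v\}$ is the right saturated hereditary set, and the first-return truncation of $f_1\cdots f_{i-1}\,g\,\rho$ produces a second simple closed path at $v$ because simplicity of $\mu$ guarantees no premature return to $v$ within $f_1\cdots f_{i-1}$, so the truncation retains $g$ in the $i$-th slot. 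All the delicate points you flag (saturation only constraining regular vertices, the case $i=1$, the possibility that $\rho$ has length zero, and the fact that the no-exit cycle need only be a power of a simple cycle) are handled correctly. The only stylistic remark is that row-finiteness is indeed inessential to your argument as long as one keeps the saturation condition restricted to regular vertices, which you do.
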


\begin{lemma} \label{closed-path-M-Kx}
If $E$ is the graph consisting of a single simple closed path of length $n$; i.e., 
$$
E^0 = \{v_1, \ldots, v_n\} \quad E^1 = \{e_1, \ldots e_n \} $$ 
$$s(e_i) = v_i \quad \text{ for $1 \leq i \leq n$}$$ 
$$r(e_i) = v_{i+1} \quad \text{ for $1 \leq i < n$} \quad \text{ and } \quad r(e_n) = v_1,$$ an d $R$ is a commutative ring with unit, then $L_R(E) \cong M_n(R[x,x^{-1}])$.
\end{lemma}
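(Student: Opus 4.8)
The plan is to build an explicit Leavitt $E$-family inside $M_n(R[x,x^{-1}])$, invoke the universal property of $L_R(E)$ to obtain a homomorphism $\phi$, and then show $\phi$ is an isomorphism by proving it is surjective and applying the Graded Uniqueness Theorem for injectivity. Writing $E_{ij}$ for the standard matrix units of $M_n(R[x,x^{-1}])$, I would set
$$ v_i \mapsto E_{ii}, \qquad e_i \mapsto E_{i,i+1} \quad (1 \le i < n), \qquad e_n \mapsto x E_{n,1},$$
and correspondingly $e_i^* \mapsto E_{i+1,i}$ for $1 \le i < n$ and $e_n^* \mapsto x^{-1} E_{1,n}$. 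The idea is that traversing the single edge $e_i$ shifts one step along the superdiagonal, while the ``wrap-around'' edge $e_n$ carries an extra factor of $x$ recording one full trip around the cycle. Verifying the four relations of Definition~\ref{Leavitt-E-fam-def} is then a routine matrix-unit computation: the $v_i$ are orthogonal idempotents; relations (1) and (2) follow from $E_{ii}E_{ij} = E_{ij} = E_{ij}E_{jj}$; relation (3), $e^*f = \delta_{e,f}\, r(e)$, follows since $E_{i+1,i}E_{j,j+1} = \delta_{ij} E_{i+1,i+1}$ (the factors $x$ and $x^{-1}$ cancelling in the $e_n^* e_n$ case); and relation (4), $v_i = e_i e_i^*$, holds because each vertex emits exactly one edge. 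The universal property then yields an $R$-algebra homomorphism $\phi \colon L_R(E) \to M_n(R[x,x^{-1}])$.

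For injectivity I would apply Theorem~\ref{GUT}, which requires a compatible $\Z$-grading on the target. The correct grading is $\deg(x^k E_{ij}) := kn + (j-i)$; this is a well-defined $\Z$-grading because $(x^k E_{ij})(x^l E_{pq}) = \delta_{jp}\, x^{k+l} E_{iq}$ and the assigned degrees add. With this grading $\phi$ is graded: $v_i = E_{ii}$ has degree $0$, each $e_i$ has degree $1$ (note $\deg(x E_{n,1}) = n + (1-n) = 1$), and each $e_i^*$ has degree $-1$. This is exactly what is forced by the observation that the full cycle $e_1 \cdots e_n$, of degree $n$, maps to $x E_{11}$. Since $\phi(r v_i) = r E_{ii} \neq 0$ for every $r \in R \setminus \{0\}$ and every $i$, the Graded Uniqueness Theorem shows $\phi$ is injective.

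For surjectivity I would argue that the image, being an $R$-subalgebra, generates everything. Products of the $E_{i,i+1}$ give $E_{ij}$ for $i < j$, products of the $E_{i+1,i}$ give $E_{ij}$ for $i > j$, and together with the $E_{ii}$ this places every matrix unit $E_{ij}$ in the image. Then $x E_{nn} = (x E_{n,1}) E_{1,n}$ and $x^{-1} E_{11} = (x^{-1} E_{1,n}) E_{n,1}$ lie in the image, whence $x E_{ij} = E_{in}(x E_{nn})E_{nj}$ and $x^{-1} E_{ij} = E_{i1}(x^{-1} E_{11}) E_{1j}$ are in the image for all $i,j$. Taking powers of $x$ and forming $R$-linear combinations then produces every element $\sum_{i,j} r_{ij}(x) E_{ij}$ of $M_n(R[x,x^{-1}])$, so $\phi$ is surjective. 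Combined with injectivity, $\phi$ is the desired isomorphism.

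The genuinely delicate point is the choice of grading: one must set $\deg(x) = n$ rather than $1$, so that one loop around the cycle (which has length $n$ in the graph) corresponds to a single power of $x$. With the naive grading $\deg(x) = 1$ the map $\phi$ is not graded and Theorem~\ref{GUT} does not apply. Everything else is straightforward bookkeeping with matrix units, and the placement of the single factor of $x$ on the wrap-around edge $e_n$ is what makes the whole scheme consistent.
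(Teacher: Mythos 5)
Your proof is correct, and it is essentially the argument the paper intends: the paper gives no proof of its own here but defers to \cite[Lemma~6.12]{Tom10}, whose proof is exactly this construction of an explicit Leavitt $E$-family of matrix units in $M_n(R[x,x^{-1}])$ (with the single factor of $x$ on the wrap-around edge) followed by a surjectivity check and an appeal to the Graded Uniqueness Theorem. Your identification of the correct grading, $\deg(x^k E_{ij}) = kn + (j-i)$, is precisely the delicate point, and you have handled it correctly.
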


The proof of Lemma~\ref{closed-path-M-Kx} is the same as the proof of \cite[Lemma~6.12]{Tom10}.

\begin{lemma} \label{ideals-tran-lem}
Let $R$ be a commutative ring with unit, let $E$ be a row-finite graph, and let $H$ be a saturated hereditary subset of $E$.  Then the ideal $I_H$ in $L_R(E)$ is a ring with a set of local units.
\end{lemma}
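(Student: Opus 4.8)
The plan is to exhibit an explicit set of commuting idempotents in $I_H$ serving as local units. By Proposition~\ref{I-H-facts} we may write $I_H = \algspan_R\{\alpha\beta^* : \alpha,\beta\in E^*,\ r(\alpha)=r(\beta)\in H\}$, so the natural building blocks are the \emph{range projections} $\gamma\gamma^*$ for paths $\gamma\in E^*$ with $r(\gamma)\in H$ (allowing also $\gamma = v\in H$, which gives the idempotent $v$ itself). Using $\gamma^*\gamma = r(\gamma)$, each $\gamma\gamma^*$ is a self-adjoint idempotent lying in $I_H$. First I would record the key multiplicative fact, read off from \eqref{mult-rules-eq}: for two such paths $\gamma,\delta$, the product $\gamma\gamma^*\cdot\delta\delta^*$ is $0$ when $\gamma$ and $\delta$ are incomparable in the prefix order, and equals the projection attached to the longer path when one is a prefix of the other. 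In particular any two range projections commute, and a finite sum $\sum_{\gamma\in G}\gamma\gamma^*$ over a set $G$ of pairwise incomparable paths is again a self-adjoint idempotent in $I_H$.

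I would then define $\Lambda$ to be the collection of all such finite sums over pairwise incomparable families of paths with range in $H$. The commutativity computation above shows that distinct elements of $\Lambda$ commute, so $\Lambda$ is a set of commuting idempotents in $I_H$; it remains to check that every element of $I_H$ is fixed on both sides by some member of $\Lambda$.

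Given $x\in I_H$, write $x=\sum_{i=1}^n r_i\alpha_i\beta_i^*$ with $r(\alpha_i)=r(\beta_i)\in H$, and let $\Gamma=\{\alpha_1,\dots,\alpha_n,\beta_1,\dots,\beta_n\}$. Taking $\Gamma_{\min}$ to be the set of minimal elements of $\Gamma$ under the prefix order (a pairwise incomparable finite set), I would set $t:=\sum_{\gamma\in\Gamma_{\min}}\gamma\gamma^*\in\Lambda$. The crucial verification is that $t\mu=\mu$ for every $\mu\in\Gamma$: the shortest element $\gamma$ of $\Gamma$ that is a prefix of $\mu$ lies in $\Gamma_{\min}$ and satisfies $\gamma\gamma^*\mu=\mu$, while every other $\gamma'\in\Gamma_{\min}$ is incomparable to $\mu$ (otherwise $\gamma$ and $\gamma'$ would be comparable, contradicting minimality) and hence annihilates $\mu$. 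Applying this to the $\alpha_i$ gives $t\alpha_i=\alpha_i$, and applying it to the $\beta_i$ together with the self-adjointness of $t$ gives $\beta_i^*t=(t\beta_i)^*=\beta_i^*$; consequently $tx=x=xt$.

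The main obstacle is the definitional requirement that the local units form a set of \emph{commuting} idempotents: a unit tailored to a single element is easy to produce, but one must check that these choices can all be drawn from one commuting family. This is exactly what the commutativity of the range projections provides, and it is the step I would treat most carefully; the combinatorial bookkeeping with the prefix order (that the minimal prefix projection fixes a path while all incomparable ones kill it) is the other point requiring attention. I note that row-finiteness of $E$ is not actually needed for this argument; it plays its role elsewhere in the section.
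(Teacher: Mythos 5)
Your argument is correct, and it is essentially the same as the paper's: the paper does not write out a proof here but defers to \cite[Lemma~6.14]{Tom10}, whose argument likewise manufactures local units for $I_H$ as finite sums of range projections $\alpha\alpha^*$ with $r(\alpha)\in H$ taken over prefix-incomparable families of paths. Your verification that the minimal-prefix projection fixes each path while all other (incomparable) projections annihilate it, together with the commutativity of all such sums, is exactly the required content, and your remark that row-finiteness is not actually used in this step is also accurate.
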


The proof of Lemma~\ref{ideals-tran-lem} is the same as the proof of \cite[Lemma~6.14]{Tom10}.

\begin{lemma} \label{not-L-non-graded-ideals}
Let $R$ be a commutative ring with unit, and let $E$ be a row-finite graph that contains a simple closed path with no exit.  Then $L_R(E)$ contains an ideal that is basic but not graded.  
\end{lemma}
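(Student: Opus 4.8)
The plan is to localize the non-graded behaviour at the given simple closed path and then import it into $L_R(E)$ through the corner corresponding to that path. Write the simple closed path with no exit as $\mu = e_1 \cdots e_n$ with $s(e_1) = r(e_n) = v_1$, and let $X = \{v_1, \ldots, v_n\}$ be its set of vertices. Because the path has no exit, each $v_i$ emits only the single edge $e_i$ and $r(e_i) \in X$; hence $X$ is hereditary, and the graph $E_X$ of Definition~\ref{graph-constructions} is exactly a single simple closed path of length $n$. By Lemma~\ref{closed-path-M-Kx} we have $L_R(E_X) \cong M_n(R[x,x^{-1}])$, and as in the proof of Theorem~\ref{graded-ideals-structure}(3) I would regard $L_R(E_X)$ as the graded subalgebra $\algspan_R\{\alpha\beta^* : \alpha,\beta \in E_X^*,\ r(\alpha)=r(\beta)\in X\}$ of $L_R(E)$, which is a full corner of the ideal $I_X$. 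Since $X$ is hereditary we have $I_X = I_{\overline X}$ (Lemma~\ref{hered-sat-ideal}), so by Lemma~\ref{ideals-tran-lem} the ring $I_X$ has a set of local units and, by Theorem~\ref{graded-ideals-structure}(3), is Morita equivalent to $L_R(E_X)$.

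Next I would transport a non-graded ideal across this corner. Let $J_0 := M_n\big((x-1)R[x,x^{-1}]\big)$, the ideal of $L_R(E_X) \cong M_n(R[x,x^{-1}])$ corresponding to the ideal $(x-1)R[x,x^{-1}]$, and let $J$ be the two-sided ideal of $L_R(E)$ generated by $J_0$. Using the local units of $I_X$ one checks $J = I_X J_0 I_X$, so $J$ is an ideal of $I_X$ and hence, by Lemma~\ref{local-units-imply-trans}, an ideal of $L_R(E)$; moreover the ideal-lattice isomorphism of Proposition~\ref{ideal-corresp-Mor-eq}, which for this corner context is restriction, gives $J \cap L_R(E_X) = J_0$. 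Under a suitable normalization of the isomorphism of Lemma~\ref{closed-path-M-Kx} we have $v_i \leftrightarrow E_{ii}$ and $v_1 - \mu \leftrightarrow (1-x)E_{11} \in J_0 \subseteq J$. Note also that evaluation $x \mapsto 1$ realizes $R$ as a retract of $R[x,x^{-1}]$ with kernel $(x-1)R[x,x^{-1}]$, so $(x-1)R[x,x^{-1}] \cap R = \{0\}$ and $1 \notin (x-1)R[x,x^{-1}]$.

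To see $J$ is not graded, note that $v_1 - \mu$ has homogeneous components $v_1$ (degree $0$) and $-\mu$ (degree $n$); if $J$ were graded it would contain $v_1$, whence $v_1 \in J \cap L_R(E_X) = J_0$, contradicting $E_{11} \notin M_n\big((x-1)R[x,x^{-1}]\big)$. To see $J$ is basic it suffices to show that $J$ contains no element $rv$ with $v \in E^0$ and $r \in R\setminus\{0\}$, for then the defining implication holds vacuously. If $v \notin \overline X$, then $rv \notin I_{\overline X} = I_X \supseteq J$, since $I_X$ is basic (Proposition~\ref{I-H-facts}). If $v = v_j \in X$, then $rv_j \in J$ forces $rv_j \in J \cap L_R(E_X) = J_0$, i.e. $r \in (x-1)R[x,x^{-1}] \cap R = \{0\}$, so $r = 0$. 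Finally, if $v \in \overline X \setminus X$, then $v$ is a regular vertex all of whose forward edges have range at an earlier stage of the saturation, so an induction on saturation rank produces a path $\gamma$ with $s(\gamma)=v$ and $r(\gamma) = v_j \in X$; then $\gamma^*(rv)\gamma = r\gamma^*\gamma = rv_j$, so $rv \in J$ would give $rv_j \in J$, which the previous case excludes.

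The main obstacle is the bookkeeping that makes the corner argument rigorous: establishing that $L_R(E_X)$ is a full corner of $I_X$ with the ideal correspondence given by restriction (so that $J \cap L_R(E_X) = J_0$ \emph{exactly}, not merely $\supseteq$), and verifying the existence of the path $\gamma$ from a saturated vertex down to the cycle. Both are routine but must be handled with care, since it is precisely the identity $J \cap L_R(E_X) = J_0$ that simultaneously delivers non-gradedness (through $v_1 \notin J$) and basicness at the cycle vertices (through $(x-1)R[x,x^{-1}] \cap R = \{0\}$), after which the path-pullback reduces all remaining vertices of $\overline X$ to the cycle and the basicness of $I_{\overline X}$ disposes of the rest.
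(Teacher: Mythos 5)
Your proof is correct, and its skeleton is the same as the paper's: both reduce to the hereditary set $X$ of cycle vertices, identify $L_R(E_X)$ with $M_n(R[x,x^{-1}])$ via Lemma~\ref{closed-path-M-Kx}, take the non-graded basic ideal of $M_n(R[x,x^{-1}])$ coming from the principal ideal generated by $x \mp 1$ (the paper uses $x+1$ and evaluation at $-1$; you use $x-1$ and evaluation at $1$ --- immaterial), and push it into $L_R(E)$ through $I_X$ using Lemmas~\ref{ideals-tran-lem} and~\ref{local-units-imply-trans}. Where you genuinely diverge is in the transfer step. The paper invokes the abstract Morita lattice isomorphism of Proposition~\ref{ideal-corresp-Mor-eq} and asserts that it carries a ``basic, not graded'' ideal of $L_R(E_X)$ to one of $I_X$; it does not spell out why basicness survives the correspondence, nor does it address vertices of $\overline{X}\setminus X$ when the resulting ideal is finally viewed as an ideal of $L_R(E)$. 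You instead realize $L_R(E_X)$ as the corner $pL_R(E)p$ with $p=\sum_{i=1}^n v_i$, prove the exact identity $J\cap L_R(E_X)=J_0$ (which is indeed routine: $z=pzp\in pI_XJ_0I_Xp\subseteq L_R(E_X)J_0L_R(E_X)\subseteq J_0$, since $X$ is finite so $p$ is an honest idempotent), and then read off non-gradedness from the single element $v_1-\mu$ and basicness from a three-way case analysis on the vertex, including the induction down the saturation to produce a path into the cycle. This costs a little more writing but buys a self-contained verification of exactly the two properties needed, and in particular closes the small gaps the paper leaves implicit in the Morita transfer.
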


\begin{proof}
Let $\alpha := e_1 \ldots e_n$ be a simple closed path with no exits in $E$.  If we let $X : = \{s(e_i) \}_{i=1}^n$, then since $\alpha$ has no exits, $X$ is a hereditary subset of $E^0$.  By Theorem~\ref{graded-ideals-structure}(3) $L_R(E_X)$ is Morita equivalent to the ideal $I_X$ in $L_R(E)$.  However, $E_X$ is the graph which consists of a single closed path, and thus $L_R(E_X) \cong M_n(R[x,x^{-1}])$ by Lemma~\ref{closed-path-M-Kx}.  Theorem~\ref{graded-ideals-structure}(1) implies that $L_R(E) \cong M_n(R[x,x^{-1}])$ has no proper nontrivial graded ideals.  Let $I := \langle x+1 \rangle$ be the ideal in $R[x, x^{-1}]$ generated by $x+1$.  Then any element of $I$ has the form $p(x) (x+1)$ for some $p(x) \in R[x,x^{-1}]$ and hence has $-1$ as a root.  It follows that for every $r \in R \setminus \{ 0 \}$ we have that $r1 \notin I$.  Since $v = 1$ in $R[x, x^{-1}]$, it follows that $rv \notin I$ for all  $r \in R \setminus \{ 0 \}$.  Thus $I$ is a basic ideal.  It follows that $M_n(I)$ is a proper nontrivial ideal of  $M_n( R[x, x^{-1}])$, which is basic but not graded.  Because the Morita context described in the proof of Theorem~\ref{graded-ideals-structure}(3) gives a lattice isomorphism from ideals of $L_R(E_X)$ to ideals of $I_X$ that preserves the grading, we may conclude that $I_X$ contains an ideal that is basic but not graded.  Since $I_X$ has a set of local units by Lemma~\ref{ideals-tran-lem}, it follows from Lemma~\ref{local-units-imply-trans} that ideals of $I_X$ are ideals of $L_R(E)$.  Hence $L_R(E)$ contains an ideal that is basic but not graded.  
\end{proof}

These results together with the Cuntz-Krieger Uniqueness Theorem give us the following theorem.

\begin{theorem} \label{K-iff-ideals-graded}
Let $R$ be a commutative ring with unit.  If $E$ is a row-finite graph, then $E$ satisfies Condition~(K) if and only if every basic ideal in $L_R(E)$ is graded.
\end{theorem}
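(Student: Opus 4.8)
The plan is to prove both directions using the machinery already assembled in this section, with the Cuntz-Krieger Uniqueness Theorem (Theorem~\ref{CKUT-thm}) powering the forward implication and Lemma~\ref{not-L-non-graded-ideals} handling the contrapositive of the reverse implication.

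\textbf{Reverse direction (every basic ideal graded $\Rightarrow$ Condition~(K)).} I would argue by contrapositive. Suppose $E$ fails Condition~(K). Since $E$ is row-finite, Proposition~\ref{K-implies-quotient-L} tells us that there is a saturated hereditary subset $H$ such that $E \setminus H$ fails Condition~(L), i.e.\ $E \setminus H$ contains a cycle with no exit. A cycle with no exit, after passing to a simple closed subpath, yields a simple closed path with no exit in $E \setminus H$. Here I would need a small bridging observation: a simple closed path with no exit in the quotient graph $E \setminus H$ lifts to (or already is) a simple closed path with no exit in a subgraph of $E$ of the relevant form, so that Lemma~\ref{not-L-non-graded-ideals} applies to produce a basic ideal that is not graded. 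Concretely, once we have a simple closed path with no exit sitting in $E$ (which is what Lemma~\ref{not-L-non-graded-ideals} requires), that lemma directly produces a basic ideal of $L_R(E)$ that fails to be graded, contradicting the hypothesis. Thus every basic ideal being graded forces Condition~(K).

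\textbf{Forward direction (Condition~(K) $\Rightarrow$ every basic ideal graded).} Let $I$ be a basic ideal of $L_R(E)$. Set $H := \{v \in E^0 : v \in I\}$, which is saturated hereditary by Lemma~\ref{H-I-sat-hered}. Since $I_H \subseteq I$ and both contain exactly the vertices of $H$, I would form the quotient $L_R(E)/I$ and observe that it is generated by the images of the vertices, edges, and ghost edges, with the vertices outside $H$ mapping to nonzero elements; in fact, because $I$ is basic, $rv \notin I$ for every $v \notin H$ and every $r \in R \setminus \{0\}$. By Part~(2) of Theorem~\ref{graded-ideals-structure} we have $L_R(E)/I_H \cong L_R(E \setminus H)$, and the further quotient map $\bar\pi : L_R(E \setminus H) \to L_R(E)/I$ carries $rv$ to a nonzero element for all $v \in (E\setminus H)^0$ and all nonzero $r$. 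Now Condition~(K) together with Proposition~\ref{K-implies-quotient-L} guarantees that $E \setminus H$ satisfies Condition~(L), so the Cuntz-Krieger Uniqueness Theorem~\ref{CKUT-thm} applies to $\bar\pi$ and shows it is injective. Hence $I = I_H$, which is a graded ideal by Proposition~\ref{I-H-facts}, so $I$ is graded.

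\textbf{Expected main obstacle.} The routine algebra (building the quotient map, checking it sends $rv$ to something nonzero) is standard and mirrors the proof of Theorem~\ref{graded-ideals-structure}. The delicate point is the reverse direction's lifting step: verifying that the failure of Condition~(L) in the quotient $E \setminus H$ genuinely yields a simple closed path \emph{with no exit} inside $E$ in exactly the configuration Lemma~\ref{not-L-non-graded-ideals} demands. One must check that a cycle without exit in $E \setminus H$ corresponds to a cycle in $E$ whose every vertex emits only the edges of the cycle (edges into $H$ having been deleted in forming $E \setminus H$ could a priori reappear as exits in $E$); handling this carefully — possibly by noting that no exit in $E \setminus H$ combined with hereditariness of $H$ rules out exits in $E$ — is where the real content lies. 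I would allocate most of the proof-writing effort to making that correspondence airtight.
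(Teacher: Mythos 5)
Your forward direction is correct and is essentially the paper's argument verbatim: form $H = \{v : v \in I\}$, use Theorem~\ref{graded-ideals-structure}(2) to identify $L_R(E)/I_H$ with $L_R(E\setminus H)$, check that basicness of $I$ makes the composite map kill no $rv$, and invoke Proposition~\ref{K-implies-quotient-L} plus Theorem~\ref{CKUT-thm} to get $I = I_H$.

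The reverse direction, however, has a genuine gap, and it is exactly at the spot you flagged. A simple closed path with no exit in $E \setminus H$ need \emph{not} be a closed path with no exit in $E$: the edges deleted in forming $E\setminus H$ are those in $r^{-1}(H)$, i.e.\ edges pointing \emph{into} $H$, and any such edge emitted by a vertex on the cycle is an exit in $E$ that is invisible in $E\setminus H$. Hereditariness of $H$ does not rule this out --- it constrains edges \emph{leaving} $H$, not edges entering it. Concretely, take $E^0 = \{v,w\}$ with a loop $e$ at $v$ and an edge $f$ from $v$ to the sink $w$; then $H = \{w\}$ is saturated hereditary, the loop has no exit in $E\setminus H$, but it has the exit $f$ in $E$, so there is no cycle without exit in $E$ at all and Lemma~\ref{not-L-non-graded-ideals} cannot be applied to $E$. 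The paper avoids lifting the cycle entirely: it applies Lemma~\ref{not-L-non-graded-ideals} to the graph $E\setminus H$ itself, producing a basic non-graded ideal $I$ of $L_R(E\setminus H) \cong L_R(E)/I_H$, and then pulls it back through the quotient map $q : L_R(E) \to L_R(E)/I_H$. The preimage $q^{-1}(I)$ is basic (if $rv \in q^{-1}(I)$ with $v \notin H$ then $r\,q(v) \in I$ forces $q(v) \in I$, while $v \in H$ gives $v \in I_H \subseteq q^{-1}(I)$), and it cannot be graded, since $q$ is a graded map with graded kernel $I_H$, so a graded preimage would force $I$ to be graded. You should replace your lifting step with this pullback argument.
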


\begin{proof}
Suppose that $E$ satisfies Condition~(K).  If $I$ is a basic ideal of $L_R(E)$, let $H := \{ v : v \in I \}$.  Then $I_H \subseteq I$, and we have a canonical surjection $q : L_R(E) / I_H \to L_R(E) / I$.  By Theorem~\ref{graded-ideals-structure}(2) there exists a canonical isomorphism $\phi : L_R(E \setminus H) \to L_R(E) / I_H$.  Since $I$ is basic, the composition $q \circ \phi : L_R(E \setminus H) \to L_R(E) / I$ has the property that $(q \circ \phi) (rv) \neq 0$ for all $v \in E^0$ and $r \in R \setminus \{ 0 \}$.  Since $E$ satisfies Condition~(K), it follows from Proposition~\ref{K-implies-quotient-L} that $E \setminus H$ satisfies Condition~(L).  Hence we may apply Theorem~\ref{CKUT-thm} to conclude that $q \circ \phi$ is injective.  Since $\phi$ is an isomorphism, this implies that $q$ is injective and $I = I_H$.  It then follows from Lemma~\ref{I-H-facts} that $I$ is graded.

Conversely, suppose that $E$ does not satisfy Condition~(K).  Then Proposition~\ref{K-implies-quotient-L} implies that there exists a saturated hereditary subset $H$ such that $E \setminus H$ does not satisfy Condition~(L).  Thus there exists a closed simple path with no exit in $E \setminus H$, and by Lemma~\ref{not-L-non-graded-ideals} the algebra $L_R(E \setminus H) \cong L_R(E)/ I_H$ contains an ideal $I$ that is basic and not graded.  If we let $q : L_R(E) \to L_R(E)/ I_H$ be the quotient map, then $q$ is graded and $q^{-1}(I)$ is an ideal of $L_R(E)$ that is basic but not graded.
\end{proof}

\begin{corollary}
If $E$ is a row-finite graph that satisfies Condition~(K), then the map $H \mapsto I_H$ is a lattice isomorphism from the lattice of saturated hereditary subsets of $E$ onto the lattice of basic ideals of $L_R(E)$.  
\end{corollary}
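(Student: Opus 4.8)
The plan is to deduce this corollary by combining the two principal results of the section, Theorem~\ref{graded-ideals-structure}(1) and Theorem~\ref{K-iff-ideals-graded}. The former already establishes that $H \mapsto I_H$ is a lattice isomorphism from the saturated hereditary subsets of $E^0$ onto the lattice of \emph{graded} basic ideals of $L_R(E)$, with meet $I_{H_1 \cap H_2}$ and join $I_{\overline{H_1 \cup H_2}}$. Thus the only thing left to supply is the observation that, under Condition~(K), the target lattice of graded basic ideals coincides with the full lattice of basic ideals.

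First I would note that every graded basic ideal is in particular a basic ideal, so the graded basic ideals sit inside the basic ideals. For the reverse inclusion I would invoke Theorem~\ref{K-iff-ideals-graded}: since $E$ is row-finite and satisfies Condition~(K), every basic ideal of $L_R(E)$ is graded, and hence is a graded basic ideal. These two inclusions give an equality of sets, so the basic ideals of $L_R(E)$ are exactly the graded basic ideals.

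Because the ideal-lattice operations $I \wedge J = I \cap J$ and $I \vee J = I + J$ make no reference to whether one restricts to graded ideals or works among all basic ideals, this set equality is automatically an equality of lattices. Therefore the codomain of the isomorphism furnished by Theorem~\ref{graded-ideals-structure}(1) is precisely the lattice of basic ideals, and $H \mapsto I_H$ is the desired lattice isomorphism, with meet and join of saturated hereditary sets given by $H_1 \cap H_2$ and $\overline{H_1 \cup H_2}$.

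I do not anticipate any genuine obstacle here: the corollary is a formal consequence of the two cited theorems, and the single point requiring a moment's care is verifying that the two relevant collections of ideals agree as sets, so that their lattice structures coincide. This is immediate from the definition of a graded basic ideal together with Theorem~\ref{K-iff-ideals-graded}.
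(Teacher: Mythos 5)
Your proof is correct and is exactly the argument the paper intends: the corollary is stated without proof precisely because it follows immediately by combining Theorem~\ref{graded-ideals-structure}(1) with Theorem~\ref{K-iff-ideals-graded}, as you do. Your extra care in checking that the set equality of graded basic ideals and basic ideals yields an equality of lattices is a reasonable (if routine) point to make explicit.
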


\begin{definition}
The Leavitt path algebra $L_R(E)$ is \emph{basically simple} if the only basic ideals of $L_R(E)$ are $\{ 0 \}$ and $L_R(E)$.  (Note that if $R = K$ is a field, then $L_K(E)$ is basically simple if and only if $L_K(E)$ is simple.)
\end{definition}

\begin{theorem} \label{bas-simp-cond-thm}
Let $R$ be a commutative ring with unit, and let $E$ be a row-finite graph.  The Leavitt path algebra $L_R(E)$ is basically simple if and only if $E$ satisfies both of the following  conditions:
\begin{itemize}
\item[(i)] The only saturated hereditary subsets of $E$ are $\emptyset$ and $E^0$, and
\item[(ii)] The graph $E$ satisfies Condition~(L).
\end{itemize}
\end{theorem}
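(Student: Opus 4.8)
The plan is to prove both implications using the correspondence results already established, handling the converse first since it reduces cleanly to Condition~(K).

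\textbf{Converse ((i) and (ii) imply basic simplicity).} Suppose $E$ satisfies (i) and (ii). First I would observe that (i) forces $E$ to satisfy Condition~(K): by Proposition~\ref{K-implies-quotient-L} it suffices to check that $E \setminus H$ satisfies Condition~(L) for every saturated hereditary $H$, but (i) says the only such $H$ are $\emptyset$ and $E^0$, so the only graphs to inspect are $E \setminus \emptyset = E$ (which satisfies Condition~(L) by (ii)) and $E \setminus E^0$ (the empty graph, which satisfies Condition~(L) vacuously). With Condition~(K) in hand, Theorem~\ref{K-iff-ideals-graded} shows every basic ideal of $L_R(E)$ is graded, and then the lattice isomorphism of Theorem~\ref{graded-ideals-structure}(1) identifies the basic ideals with the saturated hereditary subsets; by (i) these are only $I_\emptyset = \{0\}$ and $I_{E^0} = L_R(E)$, so $L_R(E)$ is basically simple.

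\textbf{Forward direction (basic simplicity implies (i) and (ii)).} I would argue the contrapositive for each condition. If (i) fails, pick a saturated hereditary $H$ with $\emptyset \ne H \ne E^0$; by Proposition~\ref{I-H-facts}, $I_H$ is a graded basic ideal with $\{v \in E^0 : v \in I_H\} = H$, so $I_H \ne \{0\}$ (a vertex of $H$ lies in it, and vertices are nonzero by Proposition~\ref{LPA-exists}) and $I_H \ne L_R(E)$ (a vertex outside $H$ is not in it), contradicting basic simplicity. If (ii) fails, there is a cycle with no exit; I would first promote it to a simple closed path with no exit, then invoke Lemma~\ref{not-L-non-graded-ideals} to produce a basic ideal that is not graded. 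Since $\{0\}$ and $L_R(E)$ are graded, such an ideal is a proper nontrivial basic ideal, again contradicting basic simplicity.

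\textbf{Main obstacle.} The one genuine step is extracting a simple closed path with no exit from a mere cycle with no exit, as needed to apply Lemma~\ref{not-L-non-graded-ideals}. The key point is that if $\alpha = e_1 \cdots e_n$ is a cycle with no exit based at $v$, then every vertex $s(e_i)$ emits exactly one edge, so the forward path from $v$ is deterministic; taking $m$ minimal with $r(e_m) = v$, the initial segment $e_1 \cdots e_m$ returns to $v$ for the first time and hence satisfies $s(e_i) \ne v$ for $2 \le i \le m$, i.e.\ it is a simple closed path, and it still has no exit because its vertices are exit-free vertices of $\alpha$. Everything else is a direct application of the previously proved lattice isomorphism, the Condition~(K) characterization, and the exit-free-cycle lemma.
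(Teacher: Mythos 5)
Your proof is correct, and it rests on the same foundational results as the paper's (the lattice isomorphism of Theorem~\ref{graded-ideals-structure}, the Condition~(K) machinery, and the no-exit-cycle construction), but it routes through them differently at two points. For the converse, you deduce Condition~(K) from Proposition~\ref{K-implies-quotient-L} by observing that (i) leaves only the two quotient graphs $E\setminus\emptyset=E$ and $E\setminus E^0$ to check; the paper instead verifies Condition~(K) directly, taking a simple closed path $\alpha$ based at $v$ with exit $f$, noting that (i) forces the saturated hereditary set of vertices with no path to $v$ to be empty, and splicing a return path from $r(f)$ into $\alpha$ to produce a second simple closed path at $v$. Your version is shorter; the paper's is self-contained and shows concretely where hypothesis (i) enters the graph theory. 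For the forward direction, the paper argues positively: basic simplicity means the only basic ideals are $\{0\}$ and $L_R(E)$, both graded, so Theorem~\ref{K-iff-ideals-graded} gives Condition~(K), whence (ii), and Theorem~\ref{graded-ideals-structure}(1) gives (i). You instead argue the contrapositive, exhibiting a proper nontrivial basic ideal directly — $I_H$ via Proposition~\ref{I-H-facts} when (i) fails, and the ideal from Lemma~\ref{not-L-non-graded-ideals} when (ii) fails — which amounts to unpacking one direction of Theorem~\ref{K-iff-ideals-graded} rather than citing it. Your reduction of a no-exit cycle to a no-exit simple closed path (first return to the base vertex; any exit of the truncation would be an exit of the original) is sound and is indeed needed; the paper makes the same silent promotion inside the proof of Theorem~\ref{K-iff-ideals-graded}, so you are right to flag it as the one step requiring an argument.
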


\begin{proof}
Suppose that $L_R(E)$ is basically simple.  Then the only basic ideals of $L_R(E)$ are $\{0\}$ and $L_R(E)$, both of which are graded.  By Theorem~\ref{K-iff-ideals-graded} we have that $E$ satisfies Condition~(K).  It then follows from Theorem~\ref{graded-ideals-structure}(1) and the fact that $L_R(E)$ is basically simple, that the only saturated hereditary subsets of $E$ are $\emptyset$ and $E^0$.  Hence (i) holds.  In addition, since Condition~(K) implies Condition~(L) (cf.~Proposition~\ref{K-implies-quotient-L}) we have that (ii) holds.

Conversely, suppose that (i) and (ii) hold.  We shall show that $E$ satisfies Condition~(K).  Let $v$ be a vertex and let $\alpha = e_1 \ldots e_n$ be a closed simple path based at $v$.  By (ii) we know that $\alpha$ has an exit $f$; i.e., there exists $f \in E^1$ with $s(f) = s(e_i)$ and $f \neq e_i$ for some $i$.  If we let $H$ be the set of vertices in $E^0$ such that there is no path from that vertex to $v$, then $H$ is saturated hereditary.  By (i) we must have either $H = \emptyset$ or $H = E^0$.  Since $v \notin H$, we have $H = \emptyset$.  Hence for every vertex in $E^0$, there is a path from that vertex to $v$.  Choose a path $\beta \in E^*$ from $r(f)$ to $v$ of minimal length.  Then $e_1 \ldots e_{i-1} f \beta$ is a simple closed path based at $v$ that is distinct from $\alpha$.  Hence $E$ satisfies Condition~(K).  It then follows from Theorem~\ref{graded-ideals-structure}(1) and (i) that $L_R(E)$ is basically simple.
\end{proof}

Condition (i) and (ii) in the above theorem can be reformulated in a number of equivalent ways.  The equivalence of the statements (2)--(5) in Proposition~\ref{simple-equiv-prop} are elementary facts about directed graphs (cf.~\cite[Theorem~1.23]{Tom9} and \cite[Proposition~3.2]{AbrPino3}).

\begin{definition}
A graph $E$ is \emph{cofinal} if whenever $e_1 e_2 e_3 \ldots$ is an infinite path in $E$ and $v \in E^0$, then there exists a finite path from $v$ to $s(e_i)$ for some $i \in \N$.
\end{definition}

\begin{proposition} \label{simple-equiv-prop}
Let $E$ be a row-finite graph, let $R$ be a commutative ring with unit, and let $L_R(E)$ be the associated Leavitt path algebra.  Then the following are equivalent.
\begin{enumerate}
\item $L_R(E)$ is basically simple.
\item $E$ satisfies Condition~(L), and the only saturated hereditary subsets of $E^0$ are $\emptyset$ and $E^0$.
\item $E$ satisfies Condition~(K), and the only saturated hereditary subsets of $E^0$ are $\emptyset$ and $E^0$.
\item $E$ satisfies Condition~(L), $E$ is cofinal, and whenever $v$ is a sink in $E$ and $w \in E^0$ there is a path from $w$ to $v$.
\item  $E$ satisfies Condition~(K), $E$ is cofinal, and whenever $v$ is a sink in $E$ and $w \in E^0$ there is a path from $w$ to $v$.
\end{enumerate}
\end{proposition}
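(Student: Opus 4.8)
The plan is to prove the cycle of equivalences $(1)\Leftrightarrow(2)\Leftrightarrow(3)$ first, relying almost entirely on the machinery already assembled, and then to handle the graph-theoretic reformulations $(2)\Leftrightarrow(4)$ and $(3)\Leftrightarrow(5)$ by citing the stated elementary facts about directed graphs. The equivalence of (1) and (2) is exactly the content of Theorem~\ref{bas-simp-cond-thm}, so I would simply invoke that theorem: $L_R(E)$ is basically simple if and only if the only saturated hereditary subsets of $E^0$ are $\emptyset$ and $E^0$ and $E$ satisfies Condition~(L). This disposes of one link with no additional work.

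Next I would establish $(2)\Leftrightarrow(3)$. The only difference between the two statements is Condition~(L) versus Condition~(K), with the saturated-hereditary hypothesis held fixed. The implication $(3)\Rightarrow(2)$ is immediate because Condition~(K) implies Condition~(L) (as noted in the discussion following Proposition~\ref{K-implies-quotient-L}). For $(2)\Rightarrow(3)$, I would observe that the converse direction of the proof of Theorem~\ref{bas-simp-cond-thm} already shows precisely this: assuming the two saturated hereditary subsets are $\emptyset$ and $E^0$ together with Condition~(L), that argument produces a second distinct simple closed path at every vertex that is the base of a closed path, hence establishes Condition~(K). So $(2)\Leftrightarrow(3)$ follows by extracting that portion of the earlier proof rather than repeating it.

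Finally, for the reformulations $(2)\Leftrightarrow(4)$ and $(3)\Leftrightarrow(5)$, the remaining task is purely graph-theoretic: to show that for a row-finite graph, the condition \emph{the only saturated hereditary subsets of $E^0$ are $\emptyset$ and $E^0$} is equivalent to the condition \emph{$E$ is cofinal and every vertex connects to every sink}. As the paragraph preceding the proposition states, this equivalence is an elementary fact about directed graphs, proved in \cite[Theorem~1.23]{Tom9} and \cite[Proposition~3.2]{AbrPino3}, so I would cite those references. Combining this graph-theoretic equivalence with the already-established links $(1)\Leftrightarrow(2)\Leftrightarrow(3)$ then yields $(2)\Leftrightarrow(4)$ and $(3)\Leftrightarrow(5)$, completing the cycle.

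The main obstacle, such as it is, lies not in any deep argument but in correctly bookkeeping which hypotheses are shared across the five statements, so that each equivalence is reduced to exactly one nontrivial ingredient (Theorem~\ref{bas-simp-cond-thm} for the algebraic content, the cited graph lemmas for the combinatorial content, and the one-line Condition~(K)$\Rightarrow$Condition~(L) observation for the $(2)\Leftrightarrow(3)$ upgrade). Since all the heavy lifting has already been done in Theorem~\ref{bas-simp-cond-thm} and the cited sources, I expect this proof to be short, amounting essentially to assembling these pieces into a single chain of biconditionals.
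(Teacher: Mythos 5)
Your proposal is correct and matches the paper's (implicit) argument: the paper gives no separate proof, instead noting that $(1)\Leftrightarrow(2)$ is exactly Theorem~\ref{bas-simp-cond-thm} and that the equivalences among $(2)$--$(5)$ are elementary graph-theoretic facts established in the cited references. Your extra observation that $(2)\Rightarrow(3)$ can be extracted from the converse direction of the proof of Theorem~\ref{bas-simp-cond-thm} is accurate and consistent with the paper's intent.
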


\begin{remark} \label{ideal-history}
The techniques used in this section are similar to those used to analyze ideals of graph $C^*$-algebras, which were inspired by the work of Cuntz and Krieger.  In \cite{CK} Cuntz and Krieger showed that the Cuntz-Krieger algebra of an irreducible matrix satisfying Condition~(I) is simple.  In \cite[Theorem~2.5]{C2} Cuntz showed that for the Cuntz-Krieger algebra of a matrix satisfying Condition~(II) there is a bijective correspondence between the ideals of the Cuntz-Krieger algebra and the hereditary subsets of a certain finite partially ordered set associated with the matrix.  Subsequently, it was shown in \cite[Theorem~3.5]{aHR} that the hereditary subsets of this partially ordered set correspond to the gauge-invariant ideals in any universal Cuntz-Krieger algebra of a finite $\{0,1\}$-matrix.  

In \cite[Theorem~6.6]{KPRR}, the authors introduced Condition~(K) for graphs, and showed that for a locally finite graph satisfying Condition~(K) there is a bijective correspondence between ideals in the graph $C^*$-algebra and saturated hereditary subsets of the graph.  Their proof used groupoid techniques and relied on realizing the graph $C^*$-algebra as the $C^*$-algebra of a groupoid.  In \cite[Theorem~4.1]{BPRS} it was shown that for $C^*$-algebras of row-finite graphs there is a bijective correspondence between gauge-invariant ideals in the graph $C^*$-algebra and saturated hereditary subsets of the graph, and in \cite[Theorem~4.4]{BPRS} it is proven that when a graph satisfies Condition~(K) all the ideals of the associated $C^*$-algebra are gauge invariant.  The techniques used in \cite{BPRS} avoided the use of groupoids, and instead used methods similar to those used by Cuntz in \cite{C2}.  In \cite[Theorem~3.6]{BHRS} and \cite[Theorem~3.5]{DT1} the analysis of ideals was extended to non-row-finite graphs, where new phenomena had to be accounted for, and it was shown that gauge-invariant ideals of the graph $C^*$-algebra are in bijective correspondence with \emph{admissible pairs}; i.e., pairs consisting of a saturated hereditary set and a subset of breaking vertices for this saturated hereditary subset.  Furthermore, these results have been generalized to Cuntz-Pimsner algebras, and it has been shown that the gauge-invariant ideals in a Cuntz-Pimsner algebra correspond to admissible pairs of ideals in the coefficient algebra of the Hilbert bimodule \cite[Theorem~8.6]{Kat5}.

In the past five years, methods similar to those in \cite{CK}, \cite{C2}, \cite{aHR}, \cite{BPRS}, \cite{BHRS}, and \cite{DT1} have been used to analyze the ideal structure of Leavitt path algebras over fields.  It has been shown in \cite[Theorem~3.11]{AbrPino} that $L_K(E)$ is simple if and only if $E$ satisfies Condition~(L) and the only saturated hereditary subsets of $E$ are $\emptyset$ and $E^0$.  In \cite[Theorem~5.3]{AMP} it was shown that if $E$ is a row-finite graph, then the graded ideals of $L_K(E)$ are in bijective correspondence with the saturated hereditary subsets of $E$.  Furthermore, in \cite[Theorem~5.7]{Tom10} it was shown that for a non-row-finite graph $E$ the graded ideals of $L_K(E)$ are in bijective correspondence with the admissible pairs of $E$.  Moreover, it was proven in \cite[Theorem~6.16]{Tom10} that a graph $E$ satisfies Condition~(K) if and only if every ideal in the Leavitt path algebra $L_K(E)$ is graded.
\end{remark}

\section{Tensor products and changing coefficients} \label{tensor-sec}

If $R$ is a commutative ring with unit and if $A$ and $B$ are $R$-algebras, then the tensor product $A \otimes_R B$ is an $R$-module that may also be given the structure of an $R$-algebra with a multiplication satisfying $(a_1 \otimes b_1) (a_2 \otimes b_2) = a_1 a_2 \otimes b_1 b_2$.  (See \cite[Ch.IV, Theorem~7.4]{Hun} for details on how this multiplication is obtained.) Furthermore, if $R$ is a commutative ring with unit that contains a unital subring $S$, then we may view $R$ as an $S$-algebra.  If, in addition, $A$ is an $S$-algebra, then $R \otimes_S A$ is an $R$-algebra with $r_1 (r_2 \otimes a) = r_1r_2 \otimes a$.

\begin{theorem} \label{change-coefficients}
Let $R$ be an algebra over the commutative unital ring $S$, and let $E$ be a graph.  Then $$L_R(E) \cong R \otimes_S L_S(E)$$ as $R$-algebras.
\end{theorem}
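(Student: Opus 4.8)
The plan is to prove the isomorphism $L_R(E) \cong R \otimes_S L_S(E)$ by exhibiting mutually inverse $R$-algebra homomorphisms, using the universal property of $L_R(E)$ (Definition~\ref{fun-ring-def}) in one direction and a direct construction in the other. The key observation is that $R \otimes_S L_S(E)$ is naturally an $R$-algebra (as noted just before the theorem statement), so to build a map \emph{out} of $L_R(E)$ it suffices to locate a Leavitt $E$-family inside $R \otimes_S L_S(E)$.

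First I would check that the elements $\{1 \otimes v,\ 1 \otimes e,\ 1 \otimes e^* : v \in E^0,\ e \in E^1\}$ form a Leavitt $E$-family in the $R$-algebra $R \otimes_S L_S(E)$. Each relation in Definition~\ref{Leavitt-E-fam-def} is inherited from the corresponding relation among $\{v, e, e^*\}$ in $L_S(E)$ via the multiplication rule $(r_1 \otimes x_1)(r_2 \otimes x_2) = r_1 r_2 \otimes x_1 x_2$; for instance $(1 \otimes e^*)(1 \otimes f) = 1 \otimes \delta_{e,f} r(e) = \delta_{e,f}(1 \otimes r(e))$, and the summation relation at a regular vertex transfers since tensoring with $1$ is additive. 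By the universal property of $L_R(E)$, this Leavitt $E$-family induces an $R$-algebra homomorphism $\Phi : L_R(E) \to R \otimes_S L_S(E)$ with $\Phi(v) = 1 \otimes v$, $\Phi(e) = 1 \otimes e$, $\Phi(e^*) = 1 \otimes e^*$.

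For the reverse direction I would construct an $S$-algebra homomorphism $\psi : L_S(E) \to L_R(E)$ from the universal property of $L_S(E)$, sending the generators $v, e, e^*$ to the corresponding generators of $L_R(E)$ (regarding $L_R(E)$ as an $S$-algebra via the subring $S \subseteq R$). Since $L_R(E)$ is already an $R$-module, the $S$-balanced bilinear map $R \times L_S(E) \to L_R(E)$ given by $(r, x) \mapsto r \cdot \psi(x)$ factors through the tensor product to yield an $R$-linear map $\Psi : R \otimes_S L_S(E) \to L_R(E)$ with $\Psi(r \otimes x) = r \cdot \psi(x)$; one checks $\Psi$ is multiplicative on simple tensors using condition~(1) in the definition of $R$-algebra, hence is an $R$-algebra homomorphism. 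Finally I would verify $\Phi$ and $\Psi$ are mutually inverse by checking they act as the identity on generators: $\Psi(\Phi(v)) = \Psi(1 \otimes v) = \psi(v) = v$ and similarly for $e, e^*$, so $\Psi \circ \Phi = \mathrm{id}$ since the generators generate $L_R(E)$; conversely $\Phi(\Psi(r \otimes v)) = \Phi(r v) = r(1 \otimes v) = r \otimes v$ and likewise on the other generators, and since such simple tensors span $R \otimes_S L_S(E)$ as an $R$-module we get $\Phi \circ \Psi = \mathrm{id}$.

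The main obstacle I anticipate is the well-definedness and multiplicativity of $\Psi$: one must be careful that the map $(r,x) \mapsto r \cdot \psi(x)$ is genuinely $S$-balanced (i.e.\ $rs \cdot \psi(x) = r \cdot \psi(s x)$ for $s \in S$), which is exactly where the compatibility of the $S$-module structure on $L_R(E)$ with $\psi$ being an $S$-algebra map is used, together with the property $s \cdot \psi(x) = \psi(s \cdot x)$ inherited from $S$-linearity of $\psi$. Verifying multiplicativity $\Psi((r_1 \otimes x_1)(r_2 \otimes x_2)) = \Psi(r_1 \otimes x_1)\Psi(r_2 \otimes x_2)$ reduces to the identity $r_1 r_2 \cdot \psi(x_1 x_2) = (r_1 \cdot \psi(x_1))(r_2 \cdot \psi(x_2))$, which follows from axiom~(1) of the $R$-algebra structure and the fact that $\psi$ is a ring homomorphism; this is routine but is the one place where the hypotheses must be assembled carefully.
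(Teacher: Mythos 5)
Your proposal is correct and follows essentially the same route as the paper: use the universal property of $L_R(E)$ applied to the Leavitt $E$-family $\{1\otimes v, 1\otimes e, 1\otimes e^*\}$ to get a map into $R\otimes_S L_S(E)$, and factor $(r,x)\mapsto r\cdot\psi(x)$ through the tensor product to get the inverse, checking both compositions on generators. The paper's proof is just a terser version of yours (and your care about $S$-balancedness and multiplicativity of $\Psi$ is exactly what the paper leaves to the reader).
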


\begin{proof}
One can verify that $$\{ 1 \otimes v : v \in E^0 \} \cup \{ 1 \otimes e, 1 \otimes e^* : e \in E^1 \}$$ is a Leavitt $E$-family in the $R$-algebra $R \otimes_S L_S(E)$, and hence there exists an $R$-algebra homomorphism $\phi : L_R(E) \to R \otimes_S L_S(E)$ with $\phi(v) = 1\otimes v$, $\phi(e) = 1 \otimes e$, and $\phi(e^*) = 1 \otimes e^*$.  Furthermore, $L_R(E)$ is an $S$-algebra that contains a Leavitt $E$-family $\{v : v \in E^0 \} \cup \{ e, e^* : e \in E^1 \}$.  Thus there exists an $S$-algebra homomorphism $\phi : L_S(E) \to L_R(E)$ with $\phi(v) = v$, $\phi(e)=e$, and $\phi(e^*) = e^*$.  Furthermore, using the universal property of the tensor product, one can verify that there exists an $R$-module morphism $\psi : R \otimes_S L_S(E) \to L_R(E)$ with $\psi (r \otimes x) = r \phi(x)$.  Finally, one can verify that $\psi$ is an inverse for $\phi$ (simply check on generators), and hence $\phi$ is an $R$-algebra isomorphism. 
\end{proof}

\begin{corollary} \label{fund-ring-K-lem}
Let $E$ be a graph, and let $K$ be a field.  If we view $K$ as a $\Z$-module, then $$L_K(E) \cong K \otimes_\Z L_{\Z} (E).$$ Furthermore, if $K$ has characteristic $p > 0$, then we may view $K$ as a $\Z_p$-module and $$L_K(E) \cong K \otimes_{\Z_p} L_{\Z_p} (E).$$
(Here $L_\Z(E)$ denotes the Leavitt path ring of characteristic $0$ associated to $E$, and $L_{\Z_p} (E)$ denotes the Leavitt path ring of characteristic $p$ associated to $E$, as described in Definition~\ref{fundamental-ring-def}.)
\end{corollary}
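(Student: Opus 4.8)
The plan is to obtain both isomorphisms as immediate specializations of Theorem~\ref{change-coefficients}, which already establishes $L_R(E) \cong R \otimes_S L_S(E)$ for any $S$-algebra $R$. All that remains is to choose the base ring $S$ and the overlying ring $R$ appropriately in each of the two cases, and to verify that $K$ really does carry the required $S$-algebra structure. The relevant facts are recorded in the preliminary discussion of algebras over commutative rings: any ring is canonically a $\Z$-algebra, and a ring of characteristic $n$ is additionally a $\Z_n$-algebra.

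For the first statement I would invoke Theorem~\ref{change-coefficients} with $S = \Z$ and $R = K$. Since $K$ is in particular a ring, it is a $\Z$-algebra in the canonical way, so the theorem yields $L_K(E) \cong K \otimes_\Z L_\Z(E)$ as $R$-algebras. Here I use that $L_\Z(E)$, the Leavitt path ring of characteristic $0$ introduced in Definition~\ref{fundamental-ring-def}, is by definition $L_S(E)$ for $S = \Z$, so no separate identification is needed.

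For the second statement I would instead take $S = \Z_p$ and $R = K$. When $\ch K = p > 0$, the exponent $p$ is automatically prime because $K$ is a field, and $K$ then acquires the structure of a $\Z_p$-algebra (its prime subfield being $\Z_p$). Feeding this into Theorem~\ref{change-coefficients} gives $L_K(E) \cong K \otimes_{\Z_p} L_{\Z_p}(E)$, with $L_{\Z_p}(E)$ being the Leavitt path ring of characteristic $p$, i.e.\ $L_S(E)$ for $S = \Z_p$.

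I do not expect any genuine obstacle here, since the entire substantive content has been packaged into Theorem~\ref{change-coefficients} and the corollary is pure specialization. The only point deserving a moment's attention is the verification that a field of characteristic $p$ is indeed a $\Z_p$-algebra, which is precisely the fact noted in the preliminaries, and the bookkeeping that the symbols $L_\Z(E)$ and $L_{\Z_p}(E)$ of Definition~\ref{fundamental-ring-def} coincide with the instances $L_S(E)$ appearing in the theorem.
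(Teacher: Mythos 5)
Your proposal is correct and matches the paper's approach exactly: the paper gives no separate proof of this corollary, treating it as an immediate specialization of Theorem~\ref{change-coefficients} with $S=\Z$ (respectively $S=\Z_p$) and $R=K$, using the facts from the preliminaries that any ring is a $\Z$-algebra and any ring of characteristic $n$ is a $\Z_n$-algebra.
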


Let $R$ be a commutative ring with unit that contains a unital subring $S$, and let $E$ be a row-finite graph.  For a saturated hereditary subset $H$ of $E$, let $I_H^S$ denote the ideal in $L_S(E)$ generated by $\{ v : v \in H \}$ and let $I_H^R$ denote the ideal in $L_R(E)$ generated by $\{ v : v \in H \}$.  Theorem~\ref{graded-ideals-structure} shows that any graded basic ideal of $L_S(E)$ has the form $I_H^S$, and any graded basic ideal of $L_R(E)$ has the form $I_H^R$.  Thus the map $I_H^S \mapsto I_H^R$ is a lattice isomorphism from the lattice of graded basic ideals of $L_S(E)$ onto the lattice of graded basic ideals of $L_R(E)$.  If we use Theorem~\ref{change-coefficients} to identify $L_R(E)$ with $R \otimes_S L_R(E)$ via the isomorphism described in the proof, then $I_H^R = R \otimes I_H^S$, and we see that $I \mapsto R \otimes I$ is a map from ideals of $L_S(E)$ onto ideals of $L_R(E)$ that restricts to an isomorphism from graded basic ideals of $L_S(E)$ onto graded basic ideal of $L_R(E)$.  In the special case that $S = \Z$ and $R = K$ is a field (respectively, a field of characteristic $p$), we see that all ideals of $L_K(E)$ are basic, and hence the map $I \mapsto K \otimes I$ is a lattice isomorphism from the lattice of graded basic ideals of $L_\Z(E)$ (respectively, $L_{\Z_p}(E)$) onto the lattice of graded ideals of $L_K(E)$.  This suggests that properties of graded ideals of $L_K(E)$ may derived from properties of graded basic ideals of $L_\Z(E)$ and $L_{\Z_n}(E)$.  

In the study of Leavitt path algebras over fields, it has frequently been found that properties of $L_K(E)$ depend only on properties of the graph $E$ and are independent of the particular field $K$ that is chosen.  The fact that $L_K(E) \cong K \otimes_\Z L_\Z(E)$ (and $L_K(E) \cong K \otimes_{\Z_p} L_{\Z_p} (E)$ if $\ch K = p$), suggests that properties of $L_K(E)$ may consequences of properties of the Leavitt path rings $L_\Z(E)$ and $L_{\Z_p}(E)$.  One may speculate that this is the reason many properties of $L_K(E)$ are independent of $K$.

\end{document}